\newtheorem{thm}{Theorem}[section]
\newtheorem{lem}{Lemma}[section]
\newtheorem{ass}{Assumption}[section]
\theoremstyle{definition}
\newtheorem{defn}{Definition}[section]
\theoremstyle{Condition}
\theoremstyle{remark}
\newtheorem{rmk}{Remark}[section]
\numberwithin{equation}{section}
\theoremstyle{example}
\numberwithin{equation}{section}
\begin{document}

\bigskip
\bigskip

\bigskip

\begin{center}

\textbf{\large A stochastic coordinate descent inertial primal-dual
algorithm  for large-scale composite optimization }

\end{center}

\begin{center}
Meng Wen $^{1,2}$, Yu-Chao Tang$^{3}$, Jigen Peng$^{1,2}$
\end{center}

\begin{center}
1. School of Mathematics and Statistics, Xi'an Jiaotong University,
Xi'an 710049, P.R. China \\
2. Beijing Center for Mathematics and Information Interdisciplinary
Sciences, Beijing, P.R. China

 3. Department of Mathematics, NanChang University, Nanchang
330031, P.R. China
\end{center}

\footnotetext{\hspace{-6mm}$^*$ Corresponding author.\\
E-mail address: wen5495688@163.com}

\bigskip

\noindent  \textbf{Abstract} In this paper we consider an inertial
primal-dual algorithm to compute the minimizations of the sum  of
two convex functions and the composition of another convex function
with a continuous linear operator. With the idea of coordinate
descent, we design a stochastic coordinate descent inertial
primal-dual splitting algorithm.  Moreover, in order to prove the
convergence of the proposed inertial  algorithm, we formulate first
the inertial version of the randomized Krasnosel'skii-Mann
iterations algorithm for approximating the set of fixed points of a
nonexpansive operator and investigate its convergence properties.
Then the convergence of stochastic coordinate descent inertial
primal-dual splitting algorithm is derived by applying the inertial
version of the randomized Krasnosel'skii-Mann iterations  to the
composition of the proximity operator. Finally, we give two
applications of our method. (1) In the case of stochastic minibatch
optimization, the algorithm can be applicated to split a composite
objective function into blocks, each of these blocks being processed
sequentially by the computer. (2) In the case of distributed
optimization, we consider a set of $N$ networked agents endowed with
private cost functions and seeking to find a consensus on the
minimizer of the aggregate cost. In that case, we obtain a
distributed iterative algorithm where isolated components of the
network are activated in an uncoordinated fashion and passing in an
asynchronous manner. Numerical results demonstrate the efficiency of
the method in the framework of large scale machine learning
applications. Generally speaking, our method converges faster than
existing methods, while keeping the computational cost of each
iteration basically unchanged.

\bigskip
\noindent \textbf{Keywords:} distributed optimization; large-scale
learning; proximity operator; inertial

\noindent \textbf{MR(2000) Subject Classification} 47H09, 90C25,

\section{Introduction}

The purpose of this paper is to designing and discussing an
efficient algorithmic framework with inertial version for minimizing
the following problem
$$\min_{x\in\mathcal{X}} f(x)+g(x)+ (h\circ D)(x),\eqno{(1.1)}$$
where $\mathcal{X}$ and $\mathcal{Y}$ are two finite-dimensional
Euclidean spaces, and $n=dim\mathcal {X}$, $m=dim\mathcal {Y}$,
$f,g\in\Gamma_{0}(\mathcal{X})$, $h\in\Gamma_{0}(\mathcal{Y}),$ $f$
is differentiable on $\mathcal{Y}$  and
$D:\mathcal{X}\rightarrow\mathcal{Y}$ a linear transform. Here and
in what follows, for a real Hilbert space $\tilde{H}$,
$\Gamma_{0}(\tilde{H})$ denotes the collection of all proper lower
semi-continuous convex functions from $\tilde{H}$ to
$(-\infty,+\infty]$. Despite its simplicity, when $g=0$ many
problems in image processing can be formulated in the form of (1.1).

\par
In this paper, the contributions of us are the following aspects:
\par
(I) We  provide a modification of the primal-dual algorithm to solve
the general Problem (1.1), which is inspired by the inertial
forward-backward splitting method[22].  We refer to our algorithm as
IADMM$^{+}$. When $\alpha_{k}=0$, the ADMM$^{+}$ algorithm
introduced by Bianchi [2] is a special case of our algorithm.  In
particular, we propose simple and easy to compute diagonal
preconditioners for which convergence of the algorithm is guaranteed
without the need to compute any step size parameters. we call this
algorithm as PADMM$^{+}$.

\par
(II) Based on the results of Bianchi [2] and Radu Ioan et al [9], we
introduce the idea of  inertial version  on randomized
krasnosel＊skii mann iterations. The form of Krasnosel'skii-Mann
iterations can be translated into fixed point iterations of a given
operator having a contraction-like property. Interestingly,
IADMM$^{+}$ is a special instances of the Inertial
Krasnosel'skii-Mann iterations. By the view of stochastic coordinate
descent, we know that at each iteration, the algorithm is only to
update  a random subset of coordinates. Although this leads to a
perturbed version of the initial Inertial Krasnosel'skii-Mann
iterations, but it can be proved to preserve the convergence
properties of the initial unperturbed version. Moreover, stochastic
coordinate descent has been used in the literature [11,23-24] for
proximal gradient algorithms. We believe that its application to the
broader class of Inertial  Krasnosel'skii-Mann algorithms can
potentially lead to various algorithms well suited to large-scale
optimization problems.

\par
(III) We use our views to large-scale optimization problems which
arises in signal processing and machine learning contexts. We prove
that the general idea of stochastic coordinate descent gives a
unified framework allowing to derive stochastic inertial algorithms
of different kinds. Furthermore, we give two application examples.
Firstly, we propose a new preconditioned stochastic  approximation
algorithm  by applying stochastic coordinate descent on the top of
PADMM$^{+}$. The algorithm is called as preconditioned stochastic
minibatch primal-dual splitting algorithm (PSMPDS). Secondly, we
introduce a random asynchronous distributed optimization methods
with preconditioning that we call as preconditioned distributed
asynchronous primal-dual splitting algorithm  (PDAPDS). The
algorithm can be used to efficiently solve an optimization problem
over a network of communicating agents.  The algorithms are
asynchronous in the sense that some components of the network are
allowed to wake up at random and perform local updates, while the
rest of the network stands still. No coordinator or global clock is
needed. The frequency of activation of the various network
components is likely to vary.

The rest of this paper is organized as follows. In the next section,
 we introduce some notations used throughout in the paper. In section 3, we devote
to introduce  IPDS and IADMM$^{+}$ algorithm,  and the relation
between them, we also show how the IADMM$^{+}$ includes ADMM$^{+}$
and the Forward-Backward algorithm as  special cases. In section 4,
we present the preconditioned primal-dual algorithm and give
conditions under which convergence of the algorithm is guaranteed.
In section 5, we provide our main result on the convergence of
 Inertial Krasnosel'skii-Mann algorithms with randomized coordinate
descent.
 In section 6, we propose a stochastic approximation
algorithm from the PADMM$^{+}$.  In section 7, we addresse the
problem of asynchronous distributed optimization. In the final
section, we show the numerical performance and efficiency of propose
algorithm through some examples in the context of large-scale
$l_{1}$-regularized logistic regression.

\section{Preliminaries }
Throughout the paper, we  denote by $\langle \cdot, \cdot\rangle$
the inner product on $\mathcal{X}$ and by $\|\cdot\|$ the norm on
$\mathcal{X}$.

\begin{ass}
The infimum of Problem (1.1) is attained. Moreover, the following
qualification condition holds
$$0\in ri(dom\, h-D\, dom\, g).$$
\end{ass}
The dual problem corresponding to the primal Problem (1.1) is
written
$$\min_{y\in\mathcal{Y}} (f+g)^{\ast}(-D^{\ast}y)+ h^{\ast}(y),$$
where $a^{\ast}$ denotes the Legendre-Fenchel transform of a
function $a$ and where $D^{\ast}$ is the adjoint of $D$. With the
Assumption 2.1, the classical Fenchel-Rockafellar duality theory
[3], [10] shows that
$$\min_{x\in\mathcal{X}} f(x)+g(x)+ (h\circ D)(x)-\min_{y\in\mathcal{Y}} (f+g)^{\ast}(-D^{\ast}y)+ h^{\ast}(y).\eqno{(2.1)}$$

\begin{defn}
 Let $f$ be  a real-valued convex function on
$\mathcal{X}$, the operator prox$_{f}$ is defined by
\begin{align*}
prox_{f}&:\mathcal{X}\rightarrow\mathcal{X}\\
& x\mapsto \arg \min_{y\in
\mathcal{X}}f(y)+\frac{1}{2}\|x-y\|_{2}^{2},
\end{align*}
called the proximity operator of $f$.

\end{defn}

\begin{defn}
Let $A$ be a closed convex set of $\mathcal{X}$. Then the indicator
function of $A$ is defined as
$$
\iota_{A}(x) = \left\{
\begin{array}{l}
0,\,\,\, \,\,if x\in A,\\
\infty,\,\,\, otherwise .
\end{array}
\right.
$$
\end{defn}
It can easy see the proximity operator of the indicator function in
a closed convex subset $A$ can be reduced a projection operator onto
this closed convex set $A$. That is,
$$prox_{\iota_{A}}=proj_{A},$$
where proj is the projection operator of $A$.

\begin{defn}
(Nonexpansive operators and firmly nonexpansive operators [3]). Let
$\mathcal{H}$ be a Euclidean space (we refer to [3] for an extension
to Hilbert spaces). An operator $T : \mathcal{H} \rightarrow
{\mathcal{H}}$ is nonexpansive if and only if it satisfies
$$\|Tx-Ty\|_{2}\leq\|x-y\|_{2}\,\,\, for\,\,all\,\,\, (x,y)\in \mathcal{H}^{2}.$$
$T$ is firmly nonexpansive if and only if it satisfies one of the
following equivalent conditions:
\par
(i)$\|Tx-Ty\|_{2}^{2}\leq\langle Tx-Ty,x-y\rangle$\,\,\,
for\,\,all\,\,\, $(x,y)\in \mathcal{H}^{2}$;
\par
(ii)$\|Tx-Ty\|_{2}^{2}=\|x-y\|_{2}^{2}-\|(I-T)x-(I-T)y\|_{2}^{2}$\,\,\,
for\,\,all\,\,\, $(x,y)\in \mathcal{H}^{2}$.
\par
It is easy to show from the above definitions that a firmly
nonexpansive operator $T$ is nonexpansive.
\end{defn}
\begin{defn}
 A mapping $T : \mathcal{H} \rightarrow \mathcal{H}$ is said to be an averaged mapping, if it can
be written as the average of the identity $I$ and a nonexpansive
mapping; that is,
$$T = (1-\alpha)I +\alpha S,\eqno{(2.2)}$$
where $\alpha$ is a number in ]0, 1[ and $S : \mathcal{H}
\rightarrow \mathcal{H}$ is nonexpansive. More precisely, when (2.2)
or the following inequality (2.3) holds, we say that $T$ is
$\alpha$-averaged.
$$\|Tx-Ty\|^{2}\leq \|x-y\|^{2}-\frac{(1-\alpha)}{\alpha}\|(I -T)x-(I -T)y\|^{2},\forall x,y\in\mathcal{H}.\eqno{(2.3)}$$

\end{defn}
A 1-averaged operator is said non-expansive. A $\frac{1}{ 2}$
-averaged operator is said firmly non-expansive.
\begin{defn}
A operator $B$ is said to be single-valued and cocoercive with
respect to a linear, selfadjoint and positive definite map $L$; that
is, for all $x,y \in\mathcal {H}$
$$\langle B(x)-B(y),x-y\rangle\geq\| B(x)-B(y)\|^{2}_{L^{-1}}\eqno{(2.4)}$$
\end{defn}
where, as usual, we denote $\|x\|^{2}_{L^{-1}} = \langle L^{-1}x,
x\rangle$ . Note that in the most simple case where $L = l$ Id, $l >
0$, the operator $B$ is $1/l$ co-coercive and hence $l$-Lipschitz.
However, we will later see that in some cases, it makes sense to
consider more general $L$.

 We refer the
readers to [3] for more details. Let $M : \mathcal{H} \rightarrow
\mathcal{H}$ be a set-valued operator. We denote by $ran(M) := \{v
\in\mathcal{H} : \exists u \in\mathcal{H}, v \in Mu\}$ the range of
$M$, by $ gra(M) := \{(u, v) \in \mathcal{H}^{2} : v \in Mu\}$ its
graph, and by $M ^{-1}$ its inverse; that is, the set-valued
operator with graph ${(v, u) \in \mathcal{H}^{2} : v \in Mu}$. We
define $ zer(M) := \{u \in \mathcal{H} : 0\in Mu\}$. $M$ is said to
be monotone if $\forall(u, u' ) \in \mathcal{H}^{2},\forall(v, v' )
\in Mu\times Mu'$, $\langle u-u' , v-v' \rangle\geq 0$ and maximally
monotone if there exists no monotone operator $M'$ such that $
gra(M) \subset gra(M') \neq gra(M)$.

The resolvent $(I + M)^{-1}$ of a maximally monotone operator $M :
\mathcal{H} \rightarrow \mathcal{H}$ is defined and single-valued on
$\mathcal{H}$ and firmly nonexpansive. The subdifferential $\partial
J$ of $J\in \Gamma_{0}(\mathcal{H})$ is maximally monotone and $(I
+\partial J)^{-1} = prox_{J}$ .

 Further, let us mention some classes of operators that are used in the paper. The
operator $A$ is said to be uniformly monotone if there exists an
increasing function $\phi_{A} : [0;+1) \rightarrow [0;+1]$ that
vanishes only at 0, and
$$\langle x-y,u-v\rangle\geq\phi_{A}( \|x-y\|), \forall(x,u),(y,v)\in gra(A).\eqno{(2.5)}$$
Prominent representatives of the class of uniformly monotone
operators are the strongly monotone operators. Let
 $\gamma> 0$ be arbitrary. We say that $A$ is
$\gamma$-strongly monotone, if $\langle x-y,u-v\rangle\geq\gamma
\|x-y\|^{2}$, for all $(x,u),(y,v)\in gra(A)$.

\begin{lem}
(Baillon-Haddad Theorem [3, Corollary 18.16]). Let $J : \mathcal {H}
\rightarrow \mathcal {R}$ be convex, differentiable on $\mathcal
{H}$ and such that $\pi\nabla J$ is nonexpansive, for some $\pi\in
]0,+\infty[$. Then $\nabla J$ is $\pi$-cocoercive; that
is,$\pi\nabla J$ is firmly nonexpansive.
\end{lem}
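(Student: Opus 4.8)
The plan is to reduce to the normalized case $\pi=1$ and then exploit the variational characterization of the gradient of a smooth convex function. Since replacing $J$ by $\pi J$ turns the hypothesis ``$\pi\nabla J$ nonexpansive'' into ``$\nabla(\pi J)$ nonexpansive'', and the conclusion ``$\pi\nabla J$ firmly nonexpansive'' into ``$\nabla(\pi J)$ firmly nonexpansive'', it suffices to treat the case $\pi=1$. So I would assume that $\nabla J$ is nonexpansive (i.e. $1$-Lipschitz) and aim to show that $\nabla J$ is firmly nonexpansive, equivalently $1$-cocoercive in the sense of (2.4).

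First I would freeze $y\in\mathcal{H}$ and introduce the auxiliary function $\phi_y(x):=J(x)-\langle\nabla J(y),x\rangle$. This function is convex and differentiable, its gradient $\nabla\phi_y(x)=\nabla J(x)-\nabla J(y)$ is again nonexpansive (hence $1$-Lipschitz), and since $\nabla\phi_y(y)=0$, convexity forces $y$ to be a global minimizer of $\phi_y$. This last point is where convexity, beyond mere smoothness, is genuinely used.

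The key step is a descent-type estimate. Using the fundamental theorem of calculus along the segment from $x$ to a test point $u$, together with the $1$-Lipschitz continuity of $\nabla\phi_y$, I would establish the quadratic upper bound
$$\phi_y(u)\leq\phi_y(x)+\langle\nabla\phi_y(x),u-x\rangle+\tfrac12\|u-x\|^2.$$
Specializing to the gradient step $u=x-\nabla\phi_y(x)$ gives $\phi_y\bigl(x-\nabla\phi_y(x)\bigr)\leq\phi_y(x)-\tfrac12\|\nabla\phi_y(x)\|^2$, and combining this with the global minimality $\phi_y(y)\leq\phi_y\bigl(x-\nabla\phi_y(x)\bigr)$ and the identity $\phi_y(x)-\phi_y(y)=J(x)-J(y)-\langle\nabla J(y),x-y\rangle$ yields
$$J(x)-J(y)-\langle\nabla J(y),x-y\rangle\geq\tfrac12\|\nabla J(x)-\nabla J(y)\|^2.$$

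Finally I would symmetrize: interchanging the roles of $x$ and $y$ produces the analogous inequality, and adding the two cancels the function values $J(x),J(y)$ to leave
$$\langle\nabla J(x)-\nabla J(y),x-y\rangle\geq\|\nabla J(x)-\nabla J(y)\|^2,$$
which is exactly $1$-cocoercivity of $\nabla J$, i.e. firm nonexpansiveness via condition (i) of Definition 2.4. Applying this normalized result to $\pi J$ then restores the factor $\pi$ and gives the firm nonexpansiveness of $\pi\nabla J$, equivalently the $\pi$-cocoercivity of $\nabla J$. I expect the main obstacle to be the descent lemma itself — deriving the quadratic upper bound cleanly from the Lipschitz gradient — since the surrounding reduction and symmetrization are routine once that estimate and the minimality of $y$ for $\phi_y$ are in hand.
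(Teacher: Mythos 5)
Your proof is correct, but the paper itself offers no proof of this lemma at all: it is quoted as background (the Baillon--Haddad theorem) with a citation to Bauschke--Combettes [3, Corollary 18.16], and is used later only as a black box. What you have written is essentially the classical textbook argument behind that citation: reduce to $\pi=1$ by replacing $J$ with $\pi J$; freeze $y$ and pass to $\phi_y(x)=J(x)-\langle\nabla J(y),x\rangle$, whose critical point $y$ is a global minimizer by convexity; prove the descent lemma
$$\phi_y(u)\leq\phi_y(x)+\langle\nabla\phi_y(x),u-x\rangle+\tfrac12\|u-x\|^{2}$$
from the fundamental theorem of calculus and $1$-Lipschitz continuity of $\nabla\phi_y$ (no convexity needed there); evaluate at the gradient step $u=x-\nabla\phi_y(x)$ to get the lower bound $J(x)-J(y)-\langle\nabla J(y),x-y\rangle\geq\tfrac12\|\nabla J(x)-\nabla J(y)\|^{2}$; and symmetrize in $x$ and $y$ so the function values cancel, leaving exactly the cocoercivity inequality of Definition 2.4(i). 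All steps check out, including the scaling bookkeeping: firm nonexpansiveness of $\nabla(\pi J)=\pi\nabla J$ is precisely $\pi$-cocoercivity of $\nabla J$. The only trade-off to note is economy versus self-containedness: the paper's citation is the standard practice for such a well-known result, while your argument makes the lemma self-contained and makes visible that the only ingredients are the descent lemma and the fact that critical points of convex functions are global minimizers.
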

\begin{lem}
((Composition of averaged operators [4, Theorem 3]). Let
$\alpha_{1}\in ]0, 1[$, $\alpha_{2}\in  ]0, 1]$, $T_{1}\in\mathcal
{A}(\mathcal {H},\alpha_{1})$, and $T_{2}\in\mathcal {A}(\mathcal
{H},\alpha_{2})$. Then $T_{1}\circ T_{2}\in \mathcal {A}(\mathcal
{H},\alpha')$,
 where
$$\alpha':=\frac{\alpha_{1}+\alpha_{2}-2\alpha_{1}\alpha_{2}}{1-\alpha_{1}\alpha_{2}}.$$
\end{lem}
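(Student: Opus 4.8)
The plan is to argue directly from the characterization (2.3) of averagedness, which for an $\alpha$-averaged operator $T$ reads $\norm{Tx-Ty}^{2}\le\norm{x-y}^{2}-\frac{1-\alpha}{\alpha}\norm{(I-T)x-(I-T)y}^{2}$. I would fix arbitrary $x,y\in\mathcal{H}$, put $u=T_{2}x$, $v=T_{2}y$, and introduce the three increments $p=x-y$, $q=T_{2}x-T_{2}y$, and $r=T_{1}u-T_{1}v=(T_{1}\circ T_{2})x-(T_{1}\circ T_{2})y$. The reason for this notation is that the defect vectors occurring in the two averaged inequalities then telescope: $(I-T_{2})x-(I-T_{2})y=p-q$, $(I-T_{1})u-(I-T_{1})v=q-r$, and $(I-T_{1}\circ T_{2})x-(I-T_{1}\circ T_{2})y=p-r$.

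Next I would apply (2.3) to $T_{2}$ at the pair $(x,y)$ and to $T_{1}$ at the pair $(u,v)$. Writing $c_{i}:=\frac{1-\alpha_{i}}{\alpha_{i}}$, this gives $\norm{q}^{2}\le\norm{p}^{2}-c_{2}\norm{p-q}^{2}$ and $\norm{r}^{2}\le\norm{q}^{2}-c_{1}\norm{q-r}^{2}$. Adding the two inequalities cancels $\norm{q}^{2}$ and leaves $\norm{r}^{2}\le\norm{p}^{2}-c_{2}\norm{p-q}^{2}-c_{1}\norm{q-r}^{2}$. Comparing this with the target estimate $\norm{r}^{2}\le\norm{p}^{2}-c\,\norm{p-r}^{2}$, where $c:=\frac{1-\alpha'}{\alpha'}$, reduces the whole lemma to the purely algebraic inequality $c_{2}\norm{p-q}^{2}+c_{1}\norm{q-r}^{2}\ge c\,\norm{p-r}^{2}$.

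To establish this I would set $s=p-q$ and $t=q-r$, so that $s+t=p-r$, and show that the quadratic form $c_{2}\norm{s}^{2}+c_{1}\norm{t}^{2}-c\,\norm{s+t}^{2}$ is nonnegative for all $s,t\in\mathcal{H}$. Expanding $\norm{s+t}^{2}$ rewrites this form as $(c_{2}-c)\norm{s}^{2}-2c\langle s,t\rangle+(c_{1}-c)\norm{t}^{2}$. The decisive computation is to verify that the stated value of $\alpha'$ yields $1-\alpha'=\frac{(1-\alpha_{1})(1-\alpha_{2})}{1-\alpha_{1}\alpha_{2}}$, hence $c=\frac{(1-\alpha_{1})(1-\alpha_{2})}{\alpha_{1}+\alpha_{2}-2\alpha_{1}\alpha_{2}}$, and then that $c_{2}-c=\frac{\alpha_{1}(1-\alpha_{2})^{2}}{\alpha_{2}(\alpha_{1}+\alpha_{2}-2\alpha_{1}\alpha_{2})}\ge0$, $c_{1}-c=\frac{\alpha_{2}(1-\alpha_{1})^{2}}{\alpha_{1}(\alpha_{1}+\alpha_{2}-2\alpha_{1}\alpha_{2})}\ge0$, and crucially $(c_{2}-c)(c_{1}-c)=c^{2}$. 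Given these identities the form is a perfect square, $c_{2}\norm{s}^{2}+c_{1}\norm{t}^{2}-c\,\norm{s+t}^{2}=\norm{\sqrt{c_{2}-c}\,s-\sqrt{c_{1}-c}\,t}^{2}\ge0$; tracing back, $T_{1}\circ T_{2}$ satisfies (2.3) with constant $\alpha'$ for the arbitrary pair $x,y$, i.e. $T_{1}\circ T_{2}\in\A(\mathcal{H},\alpha')$.

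The main obstacle, and really the only nonroutine point, is the identity $(c_{2}-c)(c_{1}-c)=c^{2}$: it is exactly what forces the particular value of $\alpha'$ and what converts the sum of the two defect terms into a single perfect square rather than merely a one-sided bound. I would also remark that the perfect-square form handles the boundary case $\alpha_{2}=1$ uniformly, where $c=c_{2}=0$ and the form degenerates to $c_{1}\norm{t}^{2}\ge0$, recovering $\alpha'=1$ (the composition of a nonexpansive operator with an averaged one is nonexpansive). Finally, one should check at the outset that $\alpha'\in\,]0,1]$ so that the conclusion is meaningful; this follows from $\alpha_{1}\in\,]0,1[$ and $\alpha_{2}\in\,]0,1]$, which give both $\alpha_{1}+\alpha_{2}-2\alpha_{1}\alpha_{2}>0$ and $1-\alpha_{1}\alpha_{2}>0$.
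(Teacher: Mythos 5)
Your proof is correct, and I have checked the algebra: indeed $1-\alpha'=\frac{(1-\alpha_{1})(1-\alpha_{2})}{1-\alpha_{1}\alpha_{2}}$, the differences $c_{2}-c=\frac{\alpha_{1}(1-\alpha_{2})^{2}}{\alpha_{2}(\alpha_{1}+\alpha_{2}-2\alpha_{1}\alpha_{2})}$ and $c_{1}-c=\frac{\alpha_{2}(1-\alpha_{1})^{2}}{\alpha_{1}(\alpha_{1}+\alpha_{2}-2\alpha_{1}\alpha_{2})}$ are nonnegative (the common denominator $\alpha_{1}+\alpha_{2}-2\alpha_{1}\alpha_{2}=\alpha_{1}(1-\alpha_{2})+\alpha_{2}(1-\alpha_{1})$ is strictly positive since $\alpha_{2}>0$ and $\alpha_{1}<1$), and $(c_{2}-c)(c_{1}-c)=c^{2}$, so the residual quadratic form is the perfect square $\norm{\sqrt{c_{2}-c}\,s-\sqrt{c_{1}-c}\,t}^{2}$. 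However, there is nothing in the paper to compare your argument against: the authors state this lemma as Lemma 2.2 purely as a quotation of [4, Theorem 3] (Ogura--Yamada) and give no proof, using it later only as a black box (in Lemma 3.1 and the proof of Theorem 3.1). So your contribution is a genuinely self-contained verification, working entirely from the defect-vector characterization (2.3) that the paper itself adopts as the definition of $\alpha$-averagedness; this is essentially the standard textbook argument (compare Combettes--Yamada or Bauschke--Combettes), and its value here is that it closes a dependence on an external reference and handles the boundary case $\alpha_{2}=1$ (composition with a merely nonexpansive map) within the same computation.
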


\begin{lem}
( [13]). Let $\tilde{H}$ be a real Hilbert space with inner product
$\langle
\cdot,  \cdot \rangle$ and norm $\|\cdot\|$ , then \\
$\forall x,y\in \tilde{H}, \forall \alpha\in[0,1], \|\alpha
x+(1-\alpha)y\|^{2}=\alpha\|x\|^{2}+(1-\alpha)\|y\|^{2}-\alpha(1-\alpha)\|x-y\|^{2}.$

\end{lem}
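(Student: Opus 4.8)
The plan is to prove this identity by direct expansion of both sides, using only the polarization $\|z\|^{2}=\langle z,z\rangle$ and the bilinearity of the inner product. First I would introduce the shorthand $\beta:=1-\alpha$, so that $\alpha+\beta=1$; this relation is the single structural fact about the coefficients that the argument exploits. Expanding the left-hand side gives
\[
\|\alpha x+\beta y\|^{2}=\alpha^{2}\|x\|^{2}+2\alpha\beta\langle x,y\rangle+\beta^{2}\|y\|^{2}.
\]

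Next I would rewrite the cross term on the right-hand side through $\|x-y\|^{2}=\|x\|^{2}-2\langle x,y\rangle+\|y\|^{2}$, which yields
\[
\alpha\|x\|^{2}+\beta\|y\|^{2}-\alpha\beta\|x-y\|^{2}=(\alpha-\alpha\beta)\|x\|^{2}+(\beta-\alpha\beta)\|y\|^{2}+2\alpha\beta\langle x,y\rangle.
\]
The decisive simplification is to observe that $\alpha-\alpha\beta=\alpha(1-\beta)=\alpha^{2}$ and, symmetrically, $\beta-\alpha\beta=\beta(1-\alpha)=\beta^{2}$, both equalities being immediate consequences of $\alpha+\beta=1$. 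Substituting these back shows that the right-hand side agrees term by term with the expanded left-hand side, which completes the proof.

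I do not anticipate any genuine obstacle: the statement is an elementary algebraic identity, and indeed it holds for every real $\alpha$, the restriction $\alpha\in[0,1]$ being retained only because the lemma will later be applied to convex combinations. In fact this is precisely the generalized parallelogram law, specializing to the classical parallelogram identity at $\alpha=\tfrac{1}{2}$. The only point requiring minor care is the coefficient bookkeeping, which the substitution $\beta=1-\alpha$ is designed to streamline.
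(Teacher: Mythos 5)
Your proof is correct: the expansion of both sides via bilinearity, together with the observation that $\alpha-\alpha\beta=\alpha^{2}$ and $\beta-\alpha\beta=\beta^{2}$ when $\beta=1-\alpha$, establishes the identity exactly. The paper itself offers no proof of this lemma — it is quoted as a known result from the literature — so your direct computation is precisely the standard argument that the citation implicitly relies on, and your remarks (validity for all real $\alpha$, specialization to the parallelogram law at $\alpha=\tfrac{1}{2}$) are accurate.
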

\begin{lem}
( see[14-16]). Let $(\varphi^{k})_{k\in\mathbb{N}}$;
$(\delta_{k})_{k\in\mathbb{N}}$ and $(\alpha_{k})_{k\in\mathbb{N}}$
be sequences in [0;+1) such that
$\varphi^{k+1}\leq\varphi^{k}+\alpha_{k}(\varphi^{k}-\varphi^{k-1})+\delta_{k}$
for all $k\geq 1$, $\sum_{k\in\mathbb{N}}\delta_{k} < +\infty$ and
there exists a real number $\alpha$ with $0\leq \alpha_{k}
\leq\alpha < 1$ for all $k\in\mathbb{N}$. Then the following hold:\\
(i) $\sum_{k\geq1}[\varphi^{k}-\varphi^{k-1}]_{+}<+\infty$, where
$[t]_{+}=\max\{t,0\}$;\\
(ii) there exists $\varphi^{\ast}\in[0;+\infty)$ such that
$\lim_{k\rightarrow+\infty}\varphi^{k}=\varphi^{\ast}$.

\end{lem}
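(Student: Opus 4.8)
The plan is to introduce the positive increments $\mu^{k} := [\varphi^{k}-\varphi^{k-1}]_{+}$ and to show that they obey a simple one-step recursion of geometric type with a summable perturbation; both conclusions then follow almost mechanically. First I would absorb the inertial term into $\mu^{k}$. Since $\alpha_{k}\leq\alpha$ and $\mu^{k}\geq 0$, the quantity $\alpha_{k}(\varphi^{k}-\varphi^{k-1})$ is bounded above by $\alpha\mu^{k}$ in both cases $\varphi^{k}\geq\varphi^{k-1}$ (where $\mu^{k}=\varphi^{k}-\varphi^{k-1}$) and $\varphi^{k}<\varphi^{k-1}$ (where the left-hand side is negative while $\mu^{k}=0$). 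Substituting into the hypothesis gives $\varphi^{k+1}-\varphi^{k}\leq\alpha\mu^{k}+\delta_{k}$, and because the right-hand side is nonnegative, taking positive parts produces the key inequality
$$\mu^{k+1}\leq\alpha\mu^{k}+\delta_{k},\qquad k\geq 1.$$

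For part (i) I would sum this inequality from $k=1$ to $N$. Setting $\Delta:=\sum_{k}\delta_{k}<+\infty$ and telescoping, one obtains $\sum_{k=1}^{N}\mu^{k}-\mu^{1}\leq\alpha\sum_{k=1}^{N}\mu^{k}+\Delta$, whence $(1-\alpha)\sum_{k=1}^{N}\mu^{k}\leq\mu^{1}+\Delta$. Since $\alpha<1$ this bound is uniform in $N$, so $\sum_{k\geq 1}[\varphi^{k}-\varphi^{k-1}]_{+}=\sum_{k\geq 1}\mu^{k}\leq(\mu^{1}+\Delta)/(1-\alpha)<+\infty$, which is exactly (i).

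For part (ii) I would form the auxiliary sequence $w^{k}:=\varphi^{k}-\sum_{j=1}^{k}\mu^{j}$. Because $\varphi^{k+1}-\varphi^{k}\leq\mu^{k+1}$ by definition of the positive part, we have $w^{k+1}-w^{k}=(\varphi^{k+1}-\varphi^{k})-\mu^{k+1}\leq 0$, so $(w^{k})$ is non-increasing; and since $\varphi^{k}\geq 0$ while the partial sums $\sum_{j=1}^{k}\mu^{j}$ stay bounded by (i), the sequence $(w^{k})$ is bounded below. A non-increasing sequence that is bounded below converges, say $w^{k}\to w^{\ast}$. As the series $\sum_{j}\mu^{j}$ also converges, $\varphi^{k}=w^{k}+\sum_{j=1}^{k}\mu^{j}$ converges to some $\varphi^{\ast}$, and passing the inequality $\varphi^{k}\geq 0$ to the limit gives $\varphi^{\ast}\in[0,+\infty)$, proving (ii).

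The one genuinely substantive step is the first: recognizing that the inertial coefficient $\alpha_{k}$, combined with the cap $\alpha_{k}\leq\alpha<1$, lets the inertial term be dominated by $\alpha\mu^{k}$, thereby decoupling the second-order recursion into a self-contained first-order one for the positive increments. The strict inequality $\alpha<1$ is essential, as it is what makes $1-\alpha>0$ and renders the summation bound in (i) finite; after that, (i) and (ii) are routine.
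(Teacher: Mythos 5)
Your proof is correct, and it is essentially the canonical argument: the paper itself gives no proof of this lemma but cites it from [14--16] (Alvarez; Alvarez--Attouch), where the proof proceeds exactly as yours does, via the positive-part recursion $\mu^{k+1}\leq\alpha\mu^{k}+\delta_{k}$ followed by the auxiliary nonincreasing, bounded-below sequence $w^{k}=\varphi^{k}-\sum_{j=1}^{k}\mu^{j}$. The only cosmetic difference is that the cited references unroll the recursion into a geometric series and interchange the order of summation, whereas you sum it directly and rearrange, which is slightly cleaner.
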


\section{An inertial primal-dual splitting algorithm
 }
\subsection{Derivation of the algorithm}
In the paper [5], Nesterov proposed a modification of the heavy ball
method in order to improve the convergence rate on smooth convex
functions. The idea of Nesterov was to use the extrapolated point
$y^{k}$ for evaluating the gradient. Moreover, in  order to  prove
optimal convergence rates of the scheme, the extrapolation parameter
$\alpha_{k}$ must satisfy some special conditions. The scheme is
given by:
$$
\left\{
\begin{array}{l}
l^{k}=x^{k}+\alpha_{k}(x^{k}-x^{k-1}),\\
x^{k+1}=l^{k}-\bar{\lambda}_{k}\nabla f(l^{k}),
\end{array}
\right.\eqno{(3.1)}
$$
where $\bar{\lambda}_{k}=1/L$, there are several choices to define
an optimal sequence $\alpha_{k}$ [5-8].

Recently, for Problem (1.1), Condat [1] considered a primal-dual
splitting method as follows:

$$
\left\{
\begin{array}{l}
\tilde{y}^{k+1}=prox_{\sigma h^{\ast}}(y^{k}+\sigma Dx^{k}),\\
\tilde{x}^{k+1}=prox_{\tau g}(x^{k}-\tau \nabla f(x^{k})-\tau
D^{\ast}(2\tilde{y}^{k+1}-y^{k})),\\
(x^{k+1}, y^{k+1})=\rho_{k}(\tilde{x}^{k+1},
\tilde{y}^{k+1})+(1-\rho_{k})(x^{k}, y^{k}),
\end{array}
\right.\eqno{(3.2)}
$$

where  $\sigma>0$, $\tau>0$,  $\frac{1}{\tau}-\sigma\|D\|^{2}>0$,
$\forall k\in \mathbb{N}$, sequences $\rho_{k}\in]0,\delta[$, and
$\delta=2-\frac{l}{2}(\frac{1}{\tau}-\sigma\|D\|^{2})^{-1}\in[1,2[$.
\par
The fixed point characterization provided by Condat [1] suggests
solving Problem (1.1 ) via the fixed point iteration scheme (3.2)
for a suitable value of the parameter  $\sigma>0$, $\tau>0$. This
iteration, which is referred to as a primal-dual splitting method
for convex optimization involving Lipschitzian, proximable and
linear composite terms. A very natural idea is to combine with the
primal-dual splitting method and the heavy ball method, so we obtain
the following Algorithm.
\begin{algorithm}[H]
\caption{An inertial primal-dual splitting algorithm(IPDS).}
\begin{algorithmic}\label{1}
\STATE Initialization: Choose $x^{0}, x^{1}\in \mathcal{X}$, $
y^{0}, y^{1}\in \mathcal{Y}$, relaxation parameters
$(\rho_{k})_{k\in \mathbb{N}}$,\\ ~~~~~~~~~~~~~~~~~~~extrapolation
parameter $\alpha_{k}$ and proximal
 parameters $\sigma>0$, $\tau>0$.\\
Iterations ($k\geq0$): Update $x^{k}$, $y^{k}$ as follows
$$
\left\{
\begin{array}{l}
\xi^{k}=x^{k}+\alpha_{k}(x^{k}-x^{k-1}),\\
\eta^{k}=y^{k}+\alpha_{k}(y^{k}-y^{k-1}),\\
\tilde{y}^{k+1}=prox_{\sigma h^{\ast}}(\eta^{k}+\sigma D\xi^{k}),\\
\tilde{x}^{k+1}=prox_{\tau g}(\xi^{k}-\tau \nabla f(\xi^{k})-\tau
D^{\ast}(2\tilde{y}^{k+1}-\eta^{k})),\\
(x^{k+1}, y^{k+1})=\rho_{k}(\tilde{x}^{k+1},
\tilde{y}^{k+1})+(1-\rho_{k})(x^{k}, y^{k}).
\end{array}
\right.
$$
~~~~~~~~~~~~~~~~~~~End for
\end{algorithmic}
\end{algorithm}
Assume that $\nabla f$ is cocoercive with respect to $L^{-1}(
cf$.(2.4)). Then for Algorithm 1, we given the following Theorem.
\begin{thm}
Let $\sigma>0$, $\tau>0$ , $(\alpha_{k})_{k\in \mathbb{N}}$ and the
sequences $(\rho_{k})_{k\in \mathbb{N}}$, be the parameters of
Algorithms 1. Let $L$ be a linear, bounded, selfadjoint and positive
definite map defined by (2.4) and that the following
hold:\\
(i)  $\frac{1}{\tau}-\sigma\|D\|^{2}>\frac{\|L\|}{2}$,\\
(ii)   $(\alpha_{k})_{k\in \mathbb{N}}$ is nondecreasing with
$\alpha_{1} = 0$ and $0\leq\alpha_{k}\leq\alpha < 1$ for every
$k\geq1$ and $\rho, \theta, \hat{\delta}>0$ are such that
$\hat{\delta}>\frac{\alpha^{2}(1+\alpha)+\alpha\theta}{1-\alpha^{2}}$
and
$0<\rho\leq\rho_{k}<\frac{\hat{\delta}-\alpha[\alpha(1+\alpha)+\alpha\hat{\delta}+\theta]}{\hat{\delta}[1+\alpha(1+\alpha)+\alpha\hat{\delta}+\theta]}$
$\forall k\geq1$.\\
Let the sequences $(x^{k},y^{k})$ be generated by Algorithms 1. Then
the sequence $\{x_{k}\}$ converges to a solution of Problem (1.1).
\end{thm}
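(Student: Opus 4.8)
The plan is to recast Algorithm 1 as an inertial Krasnosel'skii--Mann iteration for an averaged operator and then run the standard Fej\'er-monotonicity machinery in an appropriate metric. First I would reduce Problem (1.1) to the monotone inclusion $0\in(A+B)z$ for the primal--dual variable $z=(x,y)\in\mathcal{X}\times\mathcal{Y}$, where $B=(\nabla f,0)$ is the cocoercive (forward) part and $A$ gathers $\partial g$, $\partial h^{\ast}$ together with the skew-symmetric coupling induced by $D,D^{\ast}$, so that $A$ is maximally monotone. Introducing the Condat preconditioner $P=\begin{pmatrix}\frac{1}{\tau}I & -D^{\ast}\\ -D & \frac{1}{\sigma}I\end{pmatrix}$, the condition $\frac{1}{\tau}-\sigma\|D\|^{2}>0$ guarantees $P$ is positive definite, and one checks that the map producing $(\tilde{x}^{k+1},\tilde{y}^{k+1})$ from the extrapolated point $(\xi^{k},\eta^{k})$ is exactly the forward--backward operator $T=(I+P^{-1}A)^{-1}(I-P^{-1}B)$ read in the inner product $\langle\cdot,\cdot\rangle_{P}$. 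Writing $w^{k}=z^{k}+\alpha_{k}(z^{k}-z^{k-1})$, the whole scheme then collapses to the inertial relaxed iteration $z^{k+1}=(1-\rho_{k})z^{k}+\rho_{k}Tw^{k}$.

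Next I would verify that $T$ is $\alpha_{T}$-averaged in the $P$-metric with $\alpha_{T}<1$, and that $\mathrm{Fix}(T)=\mathrm{zer}(A+B)$ coincides with the set of primal--dual solutions of (1.1), which is nonempty by Assumption 2.1 and the duality (2.1). The resolvent $(I+P^{-1}A)^{-1}$ is firmly nonexpansive (that is, $\tfrac12$-averaged) in this metric, and by the Baillon--Haddad Theorem (Lemma 2.1) together with cocoercivity of $\nabla f$ with respect to $L^{-1}$, the forward step $I-P^{-1}B$ is averaged; here hypothesis (i), $\frac{1}{\tau}-\sigma\|D\|^{2}>\frac{\|L\|}{2}$, is precisely what forces its averagedness constant strictly below $1$. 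Composing via Lemma 2.2 then yields that $T$ is $\alpha_{T}$-averaged, so inequality (2.3) is available for $T$.

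The analytic core is the quasi-Fej\'er recursion. Fixing $z^{\ast}\in\mathrm{Fix}(T)$ and setting $\varphi^{k}=\|z^{k}-z^{\ast}\|_{P}^{2}$, I would combine Lemma 2.3 (to expand the relaxed convex combination), the averagedness inequality (2.3) applied at $w^{k}$, and the identity $w^{k}-z^{\ast}=(1+\alpha_{k})(z^{k}-z^{\ast})-\alpha_{k}(z^{k-1}-z^{\ast})$ to reach an estimate of the form
$$\varphi^{k+1}\leq\varphi^{k}+\alpha_{k}(\varphi^{k}-\varphi^{k-1})-\beta_{k}\|z^{k+1}-z^{k}\|_{P}^{2}+\gamma_{k}\|z^{k}-z^{k-1}\|_{P}^{2}.$$
The hard part is the constant book-keeping: introducing the auxiliary parameter $\theta$ through a Young-type splitting and the Lyapunov parameter $\hat\delta$, one sums this inequality and telescopes the inertial term, so that the prescribed ranges for $\hat\delta,\theta$ together with the upper bound on $\rho_{k}$ in hypothesis (ii) (and the fact that $\alpha_{k}$ is nondecreasing with $\alpha_{1}=0$ and $\alpha_{k}\le\alpha<1$) make the increment coefficient net positive and yield $\sum_{k}\|z^{k+1}-z^{k}\|_{P}^{2}<+\infty$. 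This summation step balancing inertia against relaxation is the principal technical obstacle.

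Once $\sum_{k}\|z^{k+1}-z^{k}\|_{P}^{2}<+\infty$ is in hand, the term $\delta_{k}:=\gamma_{k}\|z^{k}-z^{k-1}\|_{P}^{2}$ is summable and the displayed inequality has exactly the shape required by Lemma 2.4, which gives that $\lim_{k}\varphi^{k}$ exists; in particular $(z^{k})$ is bounded. Moreover $\|z^{k}-z^{k-1}\|\to0$, hence $\|w^{k}-z^{k}\|=\alpha_{k}\|z^{k}-z^{k-1}\|\to0$, and from $z^{k+1}-z^{k}=\rho_{k}(Tw^{k}-z^{k})$ we obtain asymptotic regularity $\|Tw^{k}-w^{k}\|\to0$. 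In finite dimensions any cluster point $\bar z$ of $(z^{k})$ is then a limit of $w^{k_{j}}$, and continuity of the nonexpansive $T$ forces $T\bar z=\bar z$, so $\bar z\in\mathrm{Fix}(T)$. Since $\lim_{k}\|z^{k}-z^{\ast}\|_{P}$ exists for \emph{every} fixed point $z^{\ast}$ and every cluster point lies in $\mathrm{Fix}(T)$, a standard Opial-type argument forces the entire sequence $z^{k}=(x^{k},y^{k})$ to converge to a single primal--dual solution; in particular $x^{k}$ converges to a minimizer of Problem (1.1).
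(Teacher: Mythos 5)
Your proposal is correct, and its structural half coincides with the paper's own proof: both recast the Condat update as the forward--backward operator $T=(I+P^{-1}A)^{-1}(I-P^{-1}B)$, averaged in the $P$-metric (the paper's Lemma 3.1), so that Algorithm 1 becomes an inertial relaxed Krasnosel'skii--Mann scheme with $\mathrm{Fix}(T)$ equal to the primal--dual solution set. The difference is the convergence engine. The paper stops at that reduction and invokes the inertial Krasnosel'skii--Mann theorem it quotes as Lemma 3.2 from [9], whose hypotheses are verbatim condition (ii); you instead re-derive that theorem from scratch: quasi-Fej\'er recursion, the $\theta,\hat{\delta}$ bookkeeping, Lemma 2.4, then an Opial-type endgame. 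That is essentially the proof of the cited lemma (and the same machinery the paper itself unfolds for the randomized version in Theorem 5.1), so your route is self-contained where the paper's is modular. Two remarks. First, you analyze $z^{k+1}=(1-\rho_{k})z^{k}+\rho_{k}Tw^{k}$, i.e.\ Algorithm 1 as literally written, whereas Lemma 3.2 and the paper's rewriting (3.9) relax toward the extrapolated point, $z^{k+1}=(1-\rho_{k})w^{k}+\rho_{k}Tw^{k}$; the two iterations differ by the term $(1-\rho_{k})\alpha_{k}(z^{k}-z^{k-1})$, a mismatch the paper silently glosses over, and which your self-contained analysis has the advantage of not inheriting. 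Second, precisely because of this, the inertial coefficient that appears in your Fej\'er recursion is $\rho_{k}\alpha_{k}$ rather than $\alpha_{k}$; it need not be monotone, so the Alvarez--Attouch telescoping your sketch gestures at requires a small adaptation (e.g.\ working with positive parts $[\varphi^{k}-\varphi^{k-1}]_{+}$ and the uniform bounds $\rho_{k}\alpha_{k}\leq\alpha<1$, $\gamma_{k}\leq\alpha(1+\alpha)$, together with a uniform lower bound on $(1-\rho_{k})/\rho_{k}$ coming from the upper bound on $\rho_{k}$). The stated ranges for $\rho_{k},\hat{\delta},\theta$ do close this modified argument, with room to spare, but that is exactly where your one-line claim that the constants come out ``net positive'' conceals the real work.
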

We consider the case where $D$ is injective(in particular, it is
implicit that dim$(\mathcal{X})\leq$ dim$(\mathcal{Y}))$. In the
latter case, we denote by $\mathcal{R}$ = Im$(D)$ the image of $D$
and by $D^{-1}$ the inverse of $D$ on
$\mathcal{R}\rightarrow\mathcal{X}$. We emphasize the fact that the
inclusion $\mathcal{R}\subset\mathcal{Y}$ might be strict.  We make
the following assumption:

\begin{ass}
The following facts holds true:\\
 (1)$D$ is injective;\\
 (2)$\nabla(f\circ D)^{-1}$  is cocoercive with respect to $\bar{L}^{-1}(
cf$.(2.4)) on $\mathcal {R}$.
\end{ass}

For proximal parameters $\mu>0$,
 $\tau>0$, we consider the following algorithm which we shall refer to
 as Inertial ADMM$^{+}$( IADMM$^{+}$).
\begin{algorithm}[H]
\caption{Inertial ADMM$^{+}$( IADMM$^{+}$).}
\begin{algorithmic}\label{1}
\STATE Iterations ($k\geq0$): Update $x^{k}$, $y^{k}$ as follows
$$
\left\{
\begin{array}{l}
\xi^{k}=x^{k}+\alpha_{k}(x^{k}-x^{k-1}),\\
\eta^{k}=y^{k}+\alpha_{k}(y^{k}-y^{k-1}),\\
z^{k+1}=\arg\min_{w\in\mathcal {Y}}[h(w)+\frac{\|w-(D\xi^{k}+\mu\eta^{k})\|^{2}}{2\mu}],~~~~~~~~~~~~~~~~~~~~~~~~~~~~~~~~~~~~~~~~~(3.1a)\\
y^{k+1}=\eta^{k}+\mu^{-1}(D\xi^{k}-z^{k+1}),~~~~~~~~~~~~~~~~~~~~~~~~~~~~~~~~~~~~~~~~~~~~~~~~~~~~~~~~~(3.2b)\\
u^{k+1}=(1-\tau\mu^{-1})D\xi^{k}+\tau\mu^{-1}z^{k+1},~~~~~~~~~~~~~~~~~~~~~~~~~~~~~~~~~~~~~~~~~~~~~~~~~~~(3.3c)\\
x^{k+1}=\arg\min_{w\in\mathcal {X}}[g(w)+\langle\nabla
f(\xi^{k}),w\rangle+\frac{\|Dw-u^{k+1}-\tau
y^{k+1}\|^{2}}{2\tau}].~~~~~~~~~~~~~~~~~~~(3.4d)
\end{array}
\right.
$$
End for
\end{algorithmic}
\end{algorithm}
\begin{thm}
Assume that the minimization Problem (1.1) is consistent, $\mu>0$,
and $\tau>0$.  Let Assumption 2.1 and Assumption 3.1 hold true and
 $\bar{L}$ be a linear, bounded, selfadjoint and positive definite map
defined by (2.4) and
$\frac{1}{\tau}-\frac{1}{\mu}>\frac{\|\bar{L}\|}{2}$. Suppose that
 $(\alpha_{k})_{k\in \mathbb{N}}$ is nondecreasing with
$\alpha_{1} = 0$ and $0\leq\alpha_{k}\leq\alpha < 1$ for every
$k\geq1$ and $\rho, \theta, \hat{\delta}>0$ are such that
$\hat{\delta}>\frac{\alpha^{2}(1+\alpha)+\alpha\theta}{1-\alpha^{2}}$
and
$0<\rho\leq\rho_{k}<\frac{\hat{\delta}-\alpha[\alpha(1+\alpha)+\alpha\hat{\delta}+\theta]}{\hat{\delta}[1+\alpha(1+\alpha)+\alpha\hat{\delta}+\theta]}$
$\forall k\geq1$. Let the sequences $(x^{k},y^{k})$ be generated by
Algorithms 2. Then the sequence $\{x^{k}\}$ converges to a solution
of Problem (1.1).
\end{thm}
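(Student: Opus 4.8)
The plan is to deduce Theorem 3.2 from Theorem 3.1 by exhibiting IADMM$^{+}$ (Algorithm 2) as an instance of IPDS (Algorithm 1) applied to a reformulation of Problem (1.1) on the image subspace $\mathcal{R}=\mathrm{Im}(D)$; the injectivity of $D$ (Assumption 3.1(1)) is exactly what makes this reformulation well posed. First I would introduce the change of variable $\zeta=D x\in\mathcal{R}$, so that $x=D^{-1}\zeta$ on $\mathcal{R}$, and rewrite Problem (1.1) as
\[\min_{\zeta\in\mathcal{R}}\;(f\circ D^{-1})(\zeta)+(g\circ D^{-1})(\zeta)+h(\zeta),\]
in which the function composed with $h$ is now the identity on $\mathcal{R}$. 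In these coordinates the smooth term is $f\circ D^{-1}$, and Assumption 3.1(2), namely that $\nabla(f\circ D^{-1})$ is cocoercive with respect to $\bar{L}^{-1}$ on $\mathcal{R}$, is precisely the cocoercivity hypothesis needed to invoke Theorem 3.1 for the reformulated problem, with $L$ replaced by $\bar{L}$.

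Next I would fix the parameter correspondence $\sigma=\mu^{-1}$. Because the linear operator in the reformulated problem is the identity on $\mathcal{R}$, the quantity $\|D\|$ is replaced by $1$, so condition (i) of Theorem 3.1, $\tfrac{1}{\tau}-\sigma\|D\|^{2}>\tfrac{\|L\|}{2}$, turns into exactly $\tfrac{1}{\tau}-\tfrac{1}{\mu}>\tfrac{\|\bar{L}\|}{2}$, which is the standing hypothesis of Theorem 3.2. The conditions of part (ii) on $(\alpha_{k})_{k}$, on $(\rho_{k})_{k}$, and on $\rho,\theta,\hat{\delta}$ are stated identically in the two theorems, hence carry over verbatim. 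For the dual block I would invoke Moreau's identity $prox_{\sigma h^{\ast}}(v)=v-\sigma\,prox_{\sigma^{-1}h}(\sigma^{-1}v)$: taking $v=\eta^{k}+\sigma D\xi^{k}$ with $\sigma=\mu^{-1}$ shows that the step $\tilde{y}^{k+1}=prox_{\sigma h^{\ast}}(\eta^{k}+\sigma D\xi^{k})$ of IPDS coincides with the pair (3.1a)--(3.2b), giving $\tilde{y}^{k+1}=y^{k+1}=\eta^{k}+\mu^{-1}(D\xi^{k}-z^{k+1})$.

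For the primal block I would show that the auxiliary variable $u^{k+1}$ of (3.3c), together with the minimization (3.4d), after applying $D$ and passing to the $\zeta$-variable via $D^{-1}$, reproduce $\tilde{x}^{k+1}=prox_{\tau g}(\xi^{k}-\tau\nabla f(\xi^{k})-\tau D^{\ast}(2\tilde{y}^{k+1}-\eta^{k}))$ read in the image coordinates. Since the relaxation step is common to both algorithms, the identification is then complete: the sequence $(x^{k},y^{k})$ produced by IADMM$^{+}$ is the IPDS sequence for the reformulated problem, and Theorem 3.1 yields convergence of $\{x^{k}\}$ to a minimizer of the reformulated problem, which through $x=D^{-1}\zeta$ is a solution of Problem (1.1).

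I expect the main obstacle to be the primal-step matching. The update (3.4d) carries the quadratic penalty $\|Dw-u^{k+1}-\tau y^{k+1}\|^{2}$, which collapses to a genuine proximal step on $\mathcal{R}$ only because $D$ is injective; transcribing it back and forth between $\mathcal{X}$ and $\mathcal{R}$ while keeping careful track of $D$, $D^{\ast}$ and $D^{-1}$, of the metric that $D$ induces on its image, and of how the linear term $\langle\nabla f(\xi^{k}),w\rangle$ becomes $\nabla(f\circ D^{-1})$, is the delicate computation. This is exactly where injectivity and the cocoercivity metric $\bar{L}$ (in place of $L$) are indispensable.
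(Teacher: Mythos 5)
Your proposal is correct and takes essentially the same route as the paper: the paper first records Theorem 3.1 specialized to the identity operator with $\sigma=\mu^{-1}$ (its Lemma 3.3), then applies it on $\mathcal{E}=\mathcal{R}$ with $\bar{f}=f\circ D^{-1}$, $\bar{g}=g\circ D^{-1}$ and $h$, noting that $(\tilde{x},\tilde{y})$ is a primal-dual point of (2.1) iff $(D\tilde{x},\tilde{y})$ is one of (3.11), and recovers the IADMM$^{+}$ iterations from (3.10) exactly as you propose. The only difference is one of explicitness: the paper delegates the iteration-matching bookkeeping (your Moreau-identity step for the dual block and the $D$/$D^{-1}$ coordinate change for the primal block) to the proof of Theorem 1 of reference [2], whereas you sketch it directly, with the same parameter correspondence $\sigma=\mu^{-1}$ and $\|D\|$ replaced by $1$.
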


\subsection{Proofs of convergence}
From the proof of Theorem 3.1  [1], we know that (3.2) has the
structure of a forward-backward iteration, when expressed in terms
of nonexpansive operators on $\mathcal {Z} := \mathcal
{X}\times\mathcal {Y}$, equipped with a particular inner product.
\par
Let the inner product $\langle\cdot,\cdot\rangle_{I}$ in $\mathcal
{Z}$ be defined as
$$\langle z,z'\rangle:=\langle x,x'\rangle+\langle y,y'\rangle,~~~~~\forall z=(x,y),~ z'=(x',y')\in\mathcal {Z}.$$
By endowing $\mathcal {Z}$ with this inner product, we obtain the
Euclidean space denoted by $\mathcal {Z}_{I}$ . Let us define the
bounded linear operator on $\mathcal {Z}$,

$$
P: \left(
  \begin{array}{ccccccc}
    x  \\
    y  \\

  \end{array}
\right)\mapsto\left(
  \begin{array}{ccccccc}
    \frac{1}{\tau} & D^{\ast}\\
    D & \frac{1}{\sigma}\\

  \end{array}
\right)\left(
  \begin{array}{ccccccc}
    x  \\
    y  \\

  \end{array}
\right).\eqno{(3.4)}
$$
From the condition (i), we can easily check that $P$ is positive
definite. Hence, we can define another inner product
$\langle\cdot,\cdot\rangle_{P}$ and norm
$\|\cdot\|_{P}=\langle\cdot,\cdot\rangle_{P}^{\frac{1}{2}}$ in
$\mathcal {Z}$ as
$$\langle z,z'\rangle_{P}=\langle z,Pz'\rangle_{I}.\eqno{(3.5)}$$
We denote by $\mathcal {Z}_{P}$ the corresponding Euclidean space.
\begin{lem}
( [1]). Let the conditions (i)-(iii) in Theorem 3.1[1] be ture . For
every $n\in \mathbb{N}$, the following inclusion is satisfied by
$\tilde{z}^{k+1} := (\tilde{x}^{k+1}, \tilde{y}^{k+1})$ computed by
(3.2):
$$\tilde{z}^{k+1}:=(I+P^{-1}\circ A)^{-1}\circ(I-P^{-1}\circ B)(z^{k}),\eqno{(3.6)}$$
where $$ A:= \left(
  \begin{array}{ccccccc}
    \partial g & D^{\ast} \\
    -D & \partial h^{\ast} \\

  \end{array}
\right),
 B:= \left(
  \begin{array}{ccccccc}
    \nabla f \\
   0 \\

  \end{array}
\right).
$$
Set $M_{1}=P^{-1}\circ A$, $M_{2}=P^{-1}\circ B$,
$T_{1}=(I+M_{1})^{-1}$, $T_{2}=(I-M_{2})^{-1}$, and $T=T_{1}\circ
T_{2}$. Then $T_{1}\in\mathcal {A}(\mathcal {Z}_{P},\frac{1}{2})$
and $T_{2}\in\mathcal {A}(\mathcal {Z}_{P},\frac{1}{2\kappa})$,
$\kappa:=(\frac{1}{\tau}-\sigma\|D\|^{2})/\beta$. Then $T\in\mathcal
{A}(\mathcal {Z}_{P},\frac{1}{\delta})$ and
$\delta=2-\frac{1}{2\kappa}$.

\end{lem}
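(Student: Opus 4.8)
The plan is to verify the implicit forward--backward representation (3.6) by a direct block computation, and then to establish the averagedness constants of $T_1$, $T_2$, and of their composition $T$ separately in the metric induced by $P$. I will use the explicit form of $P$ in (3.4), the cocoercivity of $\nabla f$, the resolvent/maximal-monotonicity machinery, and Lemma 2.2.

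First I would unfold (3.6). Writing $z^{k}=(x^{k},y^{k})$ and applying $P$ to both sides, the identity $\tilde z^{k+1}=(I+P^{-1}\circ A)^{-1}(I-P^{-1}\circ B)(z^{k})$ is equivalent to $0\in P(\tilde z^{k+1}-z^{k})+A\tilde z^{k+1}+Bz^{k}$. Expanding the two block rows with $A(x,y)=(\partial g(x)+D^{\ast}y,\,-Dx+\partial h^{\ast}(y))$ and $B(x,y)=(\nabla f(x),0)$, the $y$-row reads $0\in -Dx^{k}+\tfrac{1}{\sigma}(\tilde y^{k+1}-y^{k})+\partial h^{\ast}(\tilde y^{k+1})$, since the $D\tilde x^{k+1}$ contributions cancel; this is exactly $\tilde y^{k+1}=prox_{\sigma h^{\ast}}(y^{k}+\sigma Dx^{k})$. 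Substituting $\tilde y^{k+1}$ into the $x$-row and clearing the factor $\tau$ gives $\tilde x^{k+1}=prox_{\tau g}(x^{k}-\tau\nabla f(x^{k})-\tau D^{\ast}(2\tilde y^{k+1}-y^{k}))$. These are the first two lines of (3.2), which establishes (3.6); note the triangular decoupling is what recovers the sequential order of the algorithm.

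Next I would treat $T_1$. The operator $A$ is maximally monotone on $\mathcal Z$: the diagonal blocks $\partial g$, $\partial h^{\ast}$ are maximally monotone, while the off-diagonal part $\bigl(\begin{smallmatrix}0 & D^{\ast}\\ -D & 0\end{smallmatrix}\bigr)$ is bounded and skew-adjoint, hence monotone with full domain, so the sum is maximally monotone. Passing to the $P$-inner product, for any $v\in Az$, $v'\in Az'$ one has $\langle P^{-1}v-P^{-1}v',z-z'\rangle_{P}=\langle v-v',z-z'\rangle_{I}\ge 0$, so $M_1=P^{-1}\circ A$ is maximally monotone on $\mathcal Z_P$; its resolvent $T_1=(I+M_1)^{-1}$ is therefore firmly nonexpansive on $\mathcal Z_P$, i.e. $T_1\in\mathcal A(\mathcal Z_P,\tfrac12)$. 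For the forward operator $T_2=I-M_2$ with $M_2=P^{-1}\circ B$, the main technical point is to show $M_2$ is $\kappa$-cocoercive on $\mathcal Z_P$. Setting $\delta:=Bz-Bz'=(\nabla f(x)-\nabla f(x'),0)$, one computes $\langle M_2z-M_2z',z-z'\rangle_{P}=\langle\nabla f(x)-\nabla f(x'),x-x'\rangle\ge\beta^{-1}\|\nabla f(x)-\nabla f(x')\|^{2}$ by cocoercivity of $\nabla f$, while $\|M_2z-M_2z'\|_{P}^{2}=\langle P^{-1}\delta,\delta\rangle_{I}$. Because $\delta$ has vanishing $y$-component, the Schur complement identifies the relevant top-left block of $P^{-1}$ as $(\tfrac1\tau I-\sigma D^{\ast}D)^{-1}$, whose norm is $(\tfrac1\tau-\sigma\|D\|^{2})^{-1}$ by the positivity in condition (i); hence $\|M_2z-M_2z'\|_{P}^{2}\le(\tfrac1\tau-\sigma\|D\|^{2})^{-1}\|\nabla f(x)-\nabla f(x')\|^{2}$. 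Combining the two bounds yields $\langle M_2z-M_2z',z-z'\rangle_{P}\ge\kappa\|M_2z-M_2z'\|_{P}^{2}$ with $\kappa=(\tfrac1\tau-\sigma\|D\|^{2})/\beta$. Expanding $\|(I-M_2)z-(I-M_2)z'\|_{P}^{2}$ and inserting this inequality shows $T_2$ satisfies (2.3) with constant $\tfrac{1}{2\kappa}$, i.e. $T_2\in\mathcal A(\mathcal Z_P,\tfrac{1}{2\kappa})$; since $\kappa>\tfrac12$ under condition (i), this constant lies in $]0,1[$.

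Finally I would invoke Lemma 2.2 with $\alpha_1=\tfrac12$ and $\alpha_2=\tfrac{1}{2\kappa}$: the composition $T=T_1\circ T_2$ is $\alpha'$-averaged on $\mathcal Z_P$ with $\alpha'=(\alpha_1+\alpha_2-2\alpha_1\alpha_2)/(1-\alpha_1\alpha_2)$, and a short simplification gives $\alpha'=\tfrac{2\kappa}{4\kappa-1}=1/\delta$ with $\delta=2-\tfrac{1}{2\kappa}$, as claimed. The hard part is the cocoercivity estimate for $M_2$ in the $P$-metric: this is where the off-diagonal coupling of $P$ must be reconciled with the degenerate ($y$-free) structure of $B$, and the Schur-complement identity for the top-left block of $P^{-1}$ is precisely what makes the constant come out as $\kappa$. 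Everything else is a routine block calculation or a direct application of a cited result.
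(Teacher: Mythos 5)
Your proof is correct and follows essentially the same route as the source this lemma is quoted from (Condat [1]) — the paper itself offers no proof, only the citation — namely: block-wise verification of the implicit forward--backward identity $0\in P(\tilde z^{k+1}-z^{k})+A\tilde z^{k+1}+Bz^{k}$, firm nonexpansiveness of the resolvent of $P^{-1}\circ A$ in the $P$-metric via maximal monotonicity of $A$, $\kappa$-cocoercivity of $P^{-1}\circ B$ obtained from the Schur complement $(\tfrac{1}{\tau}I-\sigma D^{\ast}D)^{-1}$ of $P^{-1}$, and composition via Lemma 2.2, which indeed yields $\alpha'=\tfrac{2\kappa}{4\kappa-1}=1/\delta$. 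You also correctly read the statement's $T_{2}=(I-M_{2})^{-1}$ as the forward operator $T_{2}=I-M_{2}$ (a typo in the lemma as printed), which is the only interpretation under which the claimed averagedness constants hold.
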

\begin{lem}
( [9]).Let $\tilde{M}$ be a nonempty closed and affine subset of a
Hilbert space $\mathcal {\bar{H}}$ and $T : \tilde{M} \rightarrow
\tilde{M}$ a nonexpansive operator such that $Fix(T)\neq\emptyset$.
Considering the following iterative scheme:
$$x^{k+1}=x^{k}+\alpha_{k}(x^{k}-x^{k-1})+\rho_{k}[T(x^{k}+\alpha_{k}(x^{k}-x^{k-1}))-x^{k}-\alpha_{k}(x^{k}-x^{k-1})],\eqno{(3.7)}$$
where $x^{0}$; $x^{1}$ are arbitrarily chosen in $\tilde{M}$,
 $(\alpha_{k})_{k\in \mathbb{N}}$ is nondecreasing with
$\alpha_{1} = 0$ and $0\leq\alpha_{k}\leq\alpha < 1$ for every
$n\geq1$ and $\rho, \theta, \hat{\delta}>0$ are such that
$\hat{\delta}>\frac{\alpha^{2}(1+\alpha)+\alpha\theta}{1-\alpha^{2}}$
and
$0<\rho\leq\rho_{k}<\frac{\hat{\delta}-\alpha[\alpha(1+\alpha)+\alpha\hat{\delta}+\theta]}{\hat{\delta}[1+\alpha(1+\alpha)+\alpha\hat{\delta}+\theta]}$
$\forall k\geq1$.\\
Then the following statements are true:\\
(i) $\sum_{k\in\mathbb{N}}\|x^{k+1}-x^{k}\|^{2}<+\infty$;\\
(ii) $(x^{k})_{k\in\mathbb{N}}$ converges weakly to a point in
$Fix(T)$.
\end{lem}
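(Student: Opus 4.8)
\noindent The statement is the convergence of an inertial Krasnosel'skii--Mann iteration, so the plan is a quasi-Fej\'er energy argument in the quantity $\varphi^{k}:=\|x^{k}-x^{\ast}\|^{2}$ (for a fixed $x^{\ast}\in Fix(T)$), organised through Lemma 2.4 and closed by Opial's lemma. Throughout set $y^{k}:=x^{k}+\alpha_{k}(x^{k}-x^{k-1})$, so that the recursion reads $x^{k+1}=(1-\rho_{k})y^{k}+\rho_{k}Ty^{k}$. Since $\tilde M$ is affine, each $y^{k}$ and $x^{k+1}$ is an affine combination of iterates and hence remains in $\tilde M$, so $T$ is always evaluated where it is defined and $Fix(T)\subset\tilde M$.

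First I would derive the fundamental estimate. Applying Lemma 2.3 to $x^{k+1}=(1-\rho_{k})y^{k}+\rho_{k}Ty^{k}$ and using $\|Ty^{k}-x^{\ast}\|=\|Ty^{k}-Tx^{\ast}\|\le\|y^{k}-x^{\ast}\|$ gives
$$\|x^{k+1}-x^{\ast}\|^{2}\le\|y^{k}-x^{\ast}\|^{2}-\frac{1-\rho_{k}}{\rho_{k}}\|x^{k+1}-y^{k}\|^{2},$$
where $\rho_{k}\in(0,1)$ by the stated bound. Expanding $\|y^{k}-x^{\ast}\|^{2}=\|(1+\alpha_{k})(x^{k}-x^{\ast})-\alpha_{k}(x^{k-1}-x^{\ast})\|^{2}$ by the same identity,
$$\|y^{k}-x^{\ast}\|^{2}=(1+\alpha_{k})\varphi^{k}-\alpha_{k}\varphi^{k-1}+\alpha_{k}(1+\alpha_{k})\|x^{k}-x^{k-1}\|^{2}.$$
Inserting this, and expanding $\|x^{k+1}-y^{k}\|^{2}=\|(x^{k+1}-x^{k})-\alpha_{k}(x^{k}-x^{k-1})\|^{2}$ while absorbing its cross term by Young's inequality with the free parameter $\theta$, produces a recursion of the form
$$\varphi^{k+1}\le\varphi^{k}+\alpha_{k}(\varphi^{k}-\varphi^{k-1})-a_{k}\|x^{k+1}-x^{k}\|^{2}+b_{k}\|x^{k}-x^{k-1}\|^{2},$$
with $a_{k},b_{k}$ explicit in $\alpha_{k},\rho_{k},\theta$.

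The crux is to convert this into the hypotheses of Lemma 2.4. Introducing the surrogate energy $\psi^{k}:=\varphi^{k}-\alpha_{k}\varphi^{k-1}+\hat\delta\|x^{k}-x^{k-1}\|^{2}$, I would use the monotonicity of $(\alpha_{k})$ together with the prescribed upper bound on $\rho_{k}$ to show $\psi^{k+1}\le\psi^{k}-w\|x^{k+1}-x^{k}\|^{2}$ for some $w>0$ with $\psi^{k}\ge0$; the lower bound $\hat\delta>\frac{\alpha^{2}(1+\alpha)+\alpha\theta}{1-\alpha^{2}}$ and the admissible range of $\rho_{k}$ are precisely what make $w>0$. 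Telescoping this descent inequality gives $\sum_{k}\|x^{k+1}-x^{k}\|^{2}<+\infty$, which is claim (i). Consequently $\sum_{k}b_{k}\|x^{k}-x^{k-1}\|^{2}<+\infty$, so the recursion above meets the assumptions of Lemma 2.4 with $\delta_{k}=b_{k}\|x^{k}-x^{k-1}\|^{2}$; Lemma 2.4 then yields that $\lim_{k}\|x^{k}-x^{\ast}\|$ exists for every $x^{\ast}\in Fix(T)$.

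Finally I would pass to weak convergence by Opial's lemma. From (i), $\|x^{k+1}-x^{k}\|\to0$, hence $\|y^{k}-x^{k}\|=\alpha_{k}\|x^{k}-x^{k-1}\|\to0$ and $\|Ty^{k}-y^{k}\|=\rho_{k}^{-1}\|x^{k+1}-y^{k}\|\to0$. If $x^{k_{j}}\rightharpoonup\bar x$ along a subsequence, then $y^{k_{j}}\rightharpoonup\bar x$ as well, and demiclosedness of $I-T$ at $0$ (Browder's principle, valid for the nonexpansive $T$ on the closed convex set $\tilde M$) forces $\bar x\in Fix(T)$; since $\lim_{k}\|x^{k}-x^{\ast}\|$ exists for each $x^{\ast}\in Fix(T)$, Opial's lemma makes the weak cluster point unique, giving claim (ii). I expect the single genuine difficulty to be the middle step: choosing the surrogate $\psi^{k}$ and checking that the intricate bounds on $\rho_{k},\hat\delta,\theta$ simultaneously secure $w>0$ and the nonnegativity and summability demanded by Lemma 2.4; the opening estimate and the closing Opial argument are routine once that inequality is in place.
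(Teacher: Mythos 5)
Your overall architecture is the right one, and it is in fact the paper's own: the paper does not reprove this lemma (it imports it from [9]), but the argument it gives for the randomized analogue, Theorem 5.1, is exactly your scheme --- the Lemma 2.3 identity plus nonexpansiveness for the opening estimate, a surrogate energy of the form $\varphi^{k}-\alpha_{k}\varphi^{k-1}+c_{k}\|x^{k}-x^{k-1}\|^{2}$, a telescoping descent inequality, Lemma 2.4, and an Opial/demiclosedness closing step. Your opening estimate and your closing step are correct as written.

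The middle step, however, fails with your specific choice $c_{k}=\hat\delta$. Writing the recursion as $\varphi^{k+1}-(1+\alpha_{k})\varphi^{k}+\alpha_{k}\varphi^{k-1}\le -a_{k}\|x^{k+1}-x^{k}\|^{2}+b_{k}\|x^{k}-x^{k-1}\|^{2}$, the descent $\psi^{k+1}\le\psi^{k}-w\|x^{k+1}-x^{k}\|^{2}$ forces $b_{k}\le\hat\delta$, since the backward term cannot otherwise be absorbed. With the natural Young parameter $\lambda_{k}=(\alpha_{k}+\hat\delta\rho_{k})^{-1}$ one gets $b_{k}=\gamma_{k}=\alpha_{k}(1+\alpha_{k})+\alpha_{k}(1-\rho_{k})\hat\delta$, and $\gamma_{k}\le\hat\delta$ is \emph{not} implied by the stated hypotheses unless $\theta\ge 1+\alpha$. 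Concretely, $\alpha=1/2$, $\theta=1/10$, $\hat\delta=3/5$ satisfy $\hat\delta>\frac{\alpha^{2}(1+\alpha)+\alpha\theta}{1-\alpha^{2}}$ and leave a nonempty admissible range for $\rho_{k}$, yet $\gamma_{k}\le\hat\delta$ would force $\rho_{k}\ge 3/2$; and no other Young parameter helps, because $\hat\delta<\alpha(1+\alpha)$ there, so making $b_{k}$ small enough drives $a_{k}$ negative. The repair is precisely the paper's bookkeeping (proof of Theorem 5.1, claim (5.11)): take the surrogate coefficient to be $\gamma_{k}$ itself, i.e. $\varpi^{k}=\varphi^{k}-\alpha_{k}\varphi^{k-1}+\gamma_{k}\|x^{k}-x^{k-1}\|^{2}$, so the backward terms cancel identically and only $a_{k}\ge\gamma_{k+1}+\theta$ remains to be checked --- which is exactly what the upper bound on $\rho_{k}$ delivers. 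Relatedly, your requirement $\psi^{k}\ge 0$ is neither available nor needed: $\varpi^{k}$ may be negative, and the paper instead uses the two-sided bound $-\alpha\varphi^{k-1}\le\varpi^{k}\le\varpi^{1}$ together with a geometric sum to get $\varphi^{k}\le\alpha^{k}\varphi^{0}+\varpi^{1}/(1-\alpha)$, after which telescoping gives (i), and Lemma 2.4 plus your Opial argument gives (ii).
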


\par
In association with  Lemma 3.1 and Lemma 3.2,  we are ready to prove
Theorem 3.1

\begin{proof}
Set $\nu^{k}:=(\xi^{k},\eta^{k})$, from (3.6) we can know that the
Algorithm 1 can be described as follows:
$$
\left\{
\begin{array}{l}
\nu^{k}=z^{k}+\alpha_{k}(z^{k}-z^{k-1}),\\
\tilde{z}^{k+1}:=(I+P^{-1}\circ A)^{-1}\circ(I-P^{-1}\circ
B)(\nu^{k}).
\end{array}
\right.\eqno{(3.8)}
$$
Considering  the relaxation step, we obtain
$$
\left\{
\begin{array}{l}
\nu^{k}=z^{k}+\alpha_{k}(z^{k}-z^{k-1}),\\
\tilde{z}^{k+1}:=(I+P^{-1}\circ A)^{-1}\circ(I-P^{-1}\circ
B)(\nu^{k}),\\
z^{k+1}:=\rho_{k}(I+P^{-1}\circ A)^{-1}\circ(I-P^{-1}\circ
B)(\nu^{k})+(1-\rho_{k})\nu^{k}.
\end{array}
\right.\eqno{(3.9)}
$$

By Lemma 3.1 we know that $T=T_{1}\circ T_{2}$ is
$\frac{1}{\delta}$-averaged. In particular, it is non-expansive, so
from  conditions (i)-(ii)  and Lemma 3.2 we have that the  iterative
scheme defined by (3.9)
satisfies the following statements:\\
(i) $\sum_{k\in\mathbb{N}}\|z^{k+1}-z^{k}\|^{2}<+\infty$;\\
(ii) $(z^{k})_{k\in\mathbb{N}}$ converges to a point in $Fix(T)$.\\
Then the sequence $\{x^{k}\}$ converges to a solution of Problem
(1.1).

\end{proof}
Proof of Theorem 3.2 for Algorithm 2. Before providing the proof of
Theorem 3.2, let us introduce the following notation and Lemma.
\begin{lem}
 Given a Euclidean space $\mathcal {E}$, consider
the minimization problem $\min_{\lambda\in\mathcal{E}}
\bar{f}(\lambda)+\bar{g}(\lambda)+ h(\lambda)$, where $\bar{g}, h
\in \Gamma_{0}(\mathcal {E})$ and where $\bar{f}$ is convex and
differentiable on $\mathcal {E}$ and $\nabla \bar{f}$ is cocoercive
with respect to $L^{-1}( cf$.(2.4)). Assume that the infimum is
attained and that $0 \in ri(dom h-dom\bar{g})$. Let $\mu>0$,
$\tau>0$ be such that
$\frac{1}{\tau}-\frac{1}{\mu}>\frac{\|\bar{L}\|}{2}$
 ,    $(\alpha_{k})_{k\in \mathbb{N}}$ be nondecreasing with
$\alpha_{1} = 0$ and $0\leq\alpha_{k}\leq\alpha < 1$ for every
$n\geq1$ and $\rho, \theta, \hat{\delta}>0$ are such that
$\hat{\delta}>\frac{\alpha^{2}(1+\alpha)+\alpha\theta}{1-\alpha^{2}}$
and
$0<\rho\leq\rho_{k}<\frac{\hat{\delta}-\alpha[\alpha(1+\alpha)+\alpha\hat{\delta}+\theta]}{\hat{\delta}[1+\alpha(1+\alpha)+\alpha\hat{\delta}+\theta]}$
$\forall k\geq1$. Consider the iterates
$$
\left\{
\begin{array}{l}
\bar{\xi}^{k}=\lambda^{k}+\alpha_{k}(\lambda^{k}-\lambda^{k-1}),\\
\eta^{k}=y^{k}+\alpha_{k}(y^{k}-y^{k-1}),\\
y^{k+1}=prox_{\mu^{-1} h^{\ast}}(\eta^{k}+\mu^{-1}\bar{\xi}^{k}),~~~~~~~~~~~~~~~~~~~~~~~~~~~~~~~~~~~~~~~~~~~~~~~~~~~~~~~~(3.10a)\\
\lambda^{k+1}=prox_{\tau \bar{g}}(\bar{\xi}^{k}-\tau \nabla
\bar{f}(\bar{\xi}^{k})-\tau
(2y^{k+1}-\eta^{k})).~~~~~~~~~~~~~~~~~~~~~~~~~~~~~~~~~~~~~(3.10b)
\end{array}
\right.
$$
Then for any initial value $(\lambda^{0}, y^{0}), (\lambda^{1},
y^{1}) \in \mathcal {E}\times \mathcal {E}$, the sequence
$(\lambda^{k}, y^{k})$ converges to a primal-dual point
$(\tilde{\lambda}, \tilde{y})$, i.e., a solution of the equation
$$\min_{\lambda\in\mathcal{E}} \bar{f}(\lambda)+\bar{g}(\lambda)+
h(\lambda)=-\min_{y\in\mathcal{E}} (\bar{f}+\bar{g})^{\ast}(y)+
h^{\ast}(y).\eqno{(3.11)}$$
\end{lem}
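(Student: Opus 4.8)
The plan is to recognize Lemma 3.3 as the specialization of Theorem 3.1 to the case in which the linear operator is the identity. First I would set $D = I$ on $\mathcal{E}$ and $\sigma = \mu^{-1}$ in Algorithm 1, together with the substitutions $f \mapsto \bar{f}$ and $g \mapsto \bar{g}$. Under these identifications $D^{\ast} = I$, so the update (3.10a) becomes precisely $\tilde{y}^{k+1} = prox_{\sigma h^{\ast}}(\eta^{k} + \sigma D\bar{\xi}^{k})$ and (3.10b) becomes $\lambda^{k+1} = prox_{\tau \bar{g}}(\bar{\xi}^{k} - \tau\nabla\bar{f}(\bar{\xi}^{k}) - \tau D^{\ast}(2y^{k+1} - \eta^{k}))$. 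Hence the iterates of the lemma coincide term by term with the IPDS scheme of Algorithm 1 carrying the same inertial prediction steps $\bar{\xi}^{k}$, $\eta^{k}$ and the same relaxation structure.

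Next I would check that every hypothesis of Theorem 3.1 holds in this reduced setting. The requirement that the infimum be attained and that $0 \in ri(dom\, h - dom\, \bar{g})$ is exactly Assumption 2.1 with $D = I$, since $D\, dom\, \bar{g} = dom\, \bar{g}$. The cocoercivity of $\nabla\bar{f}$ with respect to $\bar{L}^{-1}$ is assumed, matching the hypothesis placed on $\nabla f$. Because $\|D\| = \|I\| = 1$ and $\sigma = \mu^{-1}$, condition (i) of Theorem 3.1, namely $\frac{1}{\tau} - \sigma\|D\|^{2} > \frac{\|\bar{L}\|}{2}$, reads $\frac{1}{\tau} - \frac{1}{\mu} > \frac{\|\bar{L}\|}{2}$, which is exactly the assumed bound. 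The inertial and relaxation conditions in (ii)---$(\alpha_{k})$ nondecreasing with $\alpha_{1} = 0$, $0 \le \alpha_{k} \le \alpha < 1$, and the stated bounds on $\hat{\delta}$ and $\rho_{k}$---are carried over verbatim from the statement of Theorem 3.1.

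With all hypotheses in place I would invoke Theorem 3.1. Its proof runs through the fixed-point reformulation (3.8)--(3.9) and Lemma 3.2, which yields convergence of the full iterate $z^{k} = (\lambda^{k}, y^{k})$ to a point $z^{\ast} \in Fix(T)$, where $T = T_{1} \circ T_{2}$ is the $\frac{1}{\delta}$-averaged operator of Lemma 3.1. Thus both coordinates converge simultaneously, not merely the primal sequence $\lambda^{k}$. It then remains to identify the limit as a primal-dual point: unwinding the correspondence $z = (\lambda, y)$ and the block forms of $A$ and $B$ shows that a fixed point of $T$ is a zero of $A + B$ with $D = I$, which by the Fenchel--Rockafellar duality (2.1) specialized to $D = I$ is precisely a solution of the saddle relation (3.11).

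The only point lying beyond a literal citation of Theorem 3.1 is that Theorem 3.1 as stated records only the convergence of the primal sequence, whereas here I must produce convergence of the pair $(\lambda^{k}, y^{k})$ to a primal-dual point. The main thing to verify is therefore that the fixed-point convergence generated inside the proof of Theorem 3.1 (via Lemma 3.2(ii)) already delivers convergence of both coordinates, and that every element of $Fix(T)$ encodes a primal-dual pair through the inclusion $0 \in (A+B)z^{\ast}$. I do not expect this to pose a serious obstacle; the substantive work is simply checking that the substitution $D = I$, $\sigma = \mu^{-1}$ matches each hypothesis of Theorem 3.1 exactly, after which the conclusion is inherited.
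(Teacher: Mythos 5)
Your proposal is correct and follows exactly the paper's own route: the paper's entire proof of Lemma 3.3 consists of the one-line observation that it is a special case of Theorem 3.1 (i.e., Algorithm 1 with $D=I$, $\sigma=\mu^{-1}$, $f\mapsto\bar{f}$, $g\mapsto\bar{g}$), which you reproduce and verify hypothesis by hypothesis. You in fact go beyond the paper by noticing, and sketching how to close, the gap that Theorem 3.1 as stated only asserts convergence of the primal sequence while the lemma claims convergence of the pair $(\lambda^{k},y^{k})$ to a primal-dual point; the paper passes over this silently, whereas your appeal to the fixed-point convergence of $z^{k}=(\lambda^{k},y^{k})$ via Lemma 3.2(ii) and the identification $Fix(T)=zer(A+B)$ supplies the missing justification.
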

\begin{proof}
It is easy to see that the Lemma 3.3 is a special case of Theorem
3.1. So we can obtain Lemma 3.3 from Theorem 3.1 directly.

\end{proof}
Elaborating on Lemma 3.3, we are now ready to establish the Theorem
3.2.

By setting $\mathcal {E} = \mathcal {R}$ and by assuming that
$\mathcal {E}$ is equipped with the same inner product as $\mathcal
{Y}$, one can notice that the functions $\bar{ f} = f \circ D^{-1}$,
$\bar{ g} = g \circ D^{-1}$ and $h$ satisfy the conditions of Lemma
3.3. Moreover, since $(\bar{f}+\bar{g})^{\ast}= (f + g)^{\ast}\circ
D^{\ast}$, one can also notice that $(\tilde{x}, \tilde{y})$ is a
primal-dual point associated with Eq. (2.1) if and only if
$(D\tilde{x}, \tilde{y})$ is a primal-dual point associated with Eq.
(3.11). With the same idea for the proof of Theorem 1 of  [2], we
can recover the IADMM$^{+}$ from the iterations (3.10).
\subsection{Connections to other algorithms}
We will further establish the connections to other existing methods.

When $\alpha_{k}\equiv0$ , the IADMM$^{+}$ boils down to the
ADMM$^{+}$ whose iterations are given by:
$$
\left\{
\begin{array}{l}
z^{k+1}=argmin_{w\in\mathcal {Y}}[h(w)+\frac{\|w-(Dx^{k}+\mu y^{k})\|^{2}}{2\mu}],\\
y^{k+1}=y^{k}+\mu^{-1}(Dx^{k}-z^{k+1}),\\
u^{k+1}=(1-\tau\mu^{-1})Dx^{k}+\tau\mu^{-1}z^{k+1},\\
x^{k+1}=argmin_{w\in\mathcal {X}}[g(w)+\langle\nabla
f(x^{k}),w\rangle+\frac{\|Dw-u^{k+1}-\tau y^{k+1}\|^{2}}{2\tau}].
\end{array}
\right.
$$
In the special case   $h \equiv 0$ , $D = I$ and
 $\alpha_{k}\equiv0$ it can be easily verified that $y^{k}$ is null
for all $k \geq 1$ and $u^{k} = x^{k}$. Then, the IADMM$^{+}$ boils
down to the standard Forward-Backward algorithm whose iterations are
given by:
\begin{align*}
x^{k+1}&=argmin_{w\in\mathcal
{X}}g(w)+\frac{1}{2\tau}\|w-(x^{k}-\tau \nabla
f(x^{k}))\|^{2}\\
&=prox_{\tau g}(x^{k}-\tau \nabla f(x^{k})).
\end{align*}
One can remark that $\mu$  has disappeared thus it can be set as
large as wanted so the condition on stepsize $\tau$ from Theorem 3.2
boils down to $\tau < 2/l$. Applications of this algorithm with
particular functions appear in well known learning methods such as
ISTA [10].
\section{Preconditioning}
\subsection{Convergence of the Preconditioned algorithm}
In the context of saddle point problems, Pock and Chambolle [12]
proposed a preconditioning of the form

$$ P:= \left(
  \begin{array}{ccccccc}
    \tilde{T}^{-1} & D^{\ast} \\
    D & \Sigma^{-1} \\

  \end{array}
\right)
$$

where $\tilde{T}$ and $\Sigma$ are selfadjoint, positive definite
maps. A condition for the positive definiteness of $P$ follows from
the following Lemma.
\begin{lem}
([12]). Let $A_{1}$, $A_{2}$ be symmetric positive definite maps and
$M$ a bounded operator. If
$\|A_{2}^{-\frac{1}{2}}MA_{1}^{-\frac{1}{2}} \| < 1$, then
$$ A:= \left(
  \begin{array}{ccccccc}
    A_{1} & M^{\ast} \\
    M & A_{2} \\

  \end{array}
\right)
$$

is positive definite.
\end{lem}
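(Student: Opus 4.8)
The plan is to establish positive definiteness directly from the definition, i.e.\ to show that $\langle z, A z\rangle > 0$ for every nonzero $z = (x,y)$. First I would expand the quadratic form using the block structure of $A$. Writing out the four blocks and using that we work over a real Euclidean space, so that $\langle x, M^{\ast} y\rangle = \langle M x, y\rangle = \langle y, M x\rangle$, one obtains
\begin{equation*}
\langle z, A z\rangle = \langle x, A_{1} x\rangle + 2\langle y, M x\rangle + \langle y, A_{2} y\rangle .
\end{equation*}

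The key step is a congruence transformation that normalizes the two ``pure'' terms. Since $A_{1}$ and $A_{2}$ are symmetric and positive definite, their square roots $A_{1}^{1/2}$ and $A_{2}^{1/2}$ exist and are symmetric, positive definite, and invertible. Introducing the new variables $u := A_{1}^{1/2} x$ and $v := A_{2}^{1/2} y$ together with the operator $N := A_{2}^{-1/2} M A_{1}^{-1/2}$, the three terms become $\|u\|^{2}$, $2\langle v, N u\rangle$, and $\|v\|^{2}$ respectively, so that
\begin{equation*}
\langle z, A z\rangle = \|u\|^{2} + 2\langle v, N u\rangle + \|v\|^{2}.
\end{equation*}
Observe that $z \neq 0$ is equivalent to $(u,v) \neq 0$, because $A_{1}^{1/2}$ and $A_{2}^{1/2}$ are invertible.

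Finally I would control the cross term. By Cauchy--Schwarz and the definition of the operator norm, $|\langle v, N u\rangle| \leq \|N\|\,\|u\|\,\|v\|$, and combining this with $2\|u\|\|v\| \leq \|u\|^{2} + \|v\|^{2}$ gives
\begin{equation*}
\langle z, A z\rangle \geq \|u\|^{2} - 2\|N\|\,\|u\|\,\|v\| + \|v\|^{2} \geq (1 - \|N\|)(\|u\|^{2} + \|v\|^{2}).
\end{equation*}
The hypothesis $\|A_{2}^{-1/2} M A_{1}^{-1/2}\| = \|N\| < 1$ makes the factor $1 - \|N\|$ strictly positive, so the right-hand side is strictly positive whenever $(u,v) \neq 0$, that is, whenever $z \neq 0$. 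This yields $\langle z, A z\rangle > 0$ for all nonzero $z$, which is precisely the asserted positive definiteness.

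As for difficulty, the argument is essentially routine once the congruence $u = A_{1}^{1/2} x$, $v = A_{2}^{1/2} y$ is performed; the only genuine content is recognizing that this change of variables turns the mixed block $M$ into $N = A_{2}^{-1/2} M A_{1}^{-1/2}$, which is exactly the operator whose norm is bounded by the hypothesis. I expect no real analytic obstacle: the main thing to be careful about is the bookkeeping of adjoints and square roots (ensuring the $A_{i}^{1/2}$ are self-adjoint, so that $\langle x, A_{1} x\rangle = \|A_{1}^{1/2} x\|^{2}$ and likewise for $A_{2}$), together with the sign in the cross-term estimate.
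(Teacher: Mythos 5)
Your proof is correct. Note that the paper itself supplies no proof of this lemma --- it is stated with the citation ([12]) and taken directly from Pock and Chambolle --- and your argument (the congruence $u = A_{1}^{1/2}x$, $v = A_{2}^{1/2}y$ turning the cross block into $N = A_{2}^{-1/2}MA_{1}^{-1/2}$, followed by Cauchy--Schwarz and $2\|u\|\|v\| \leq \|u\|^{2}+\|v\|^{2}$) is precisely the standard argument used in that reference, so there is nothing the paper does differently.
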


 Now, we study preconditioning
techniques for the inertial primal-dual splitting algorithm(IPDS),
then we obtain the following algorithm.
\begin{algorithm}[H]
\caption{An inertial primal-dual splitting algorithm with
preconditioning (IPDSP).}
\begin{algorithmic}\label{1}
\STATE Initialization: Choose $x^{0}, x^{1}\in \mathcal{X}$, $
y^{0}, y^{1}\in \mathcal{Y}$, relaxation parameters
$(\rho_{k})_{k\in \mathbb{N}}$,\\ ~~~~~~~~~~~~~~~~~~~extrapolation
parameter $\alpha_{k}$ and positive definite maps $\tilde{T}$, $\Sigma$.\\
Iterations ($k\geq0$): Update $x^{k}$, $y^{k}$ as follows
$$
\left\{
\begin{array}{l}
\xi^{k}=x^{k}+\alpha_{k}(x^{k}-x^{k-1}),\\
\eta^{k}=y^{k}+\alpha_{k}(y^{k}-y^{k-1}),\\
\tilde{y}^{k+1}=prox_{\Sigma h^{\ast}}(\eta^{k}+\Sigma D\xi^{k}),\\
\tilde{x}^{k+1}=prox_{\tilde{T} g}(\xi^{k}-\tilde{T} \nabla
f(\xi^{k})-\tilde{T}
D^{\ast}(2\tilde{y}^{k+1}-\eta^{k})),\\
(x^{k+1}, y^{k+1})=\rho_{k}(\tilde{x}^{k+1},
\tilde{y}^{k+1})+(1-\rho_{k})(x^{k}, y^{k}).
\end{array}
\right.
$$
~~~~~~~~~~~~~~~~~~~End for
\end{algorithmic}
\end{algorithm}
It turns out that the resulting method converges under appropriate
conditions.
\begin{thm}
In the setting of Theorem 3.1 let furthermore  $\nabla f$  be
co-coercive w.r.t. a bound, linear, symmetric and positive linear
maps $E^{-1}$. If it holds that\\
(i) $\tilde{T}^{-1}-\frac{1}{2}E>0;$\\
(ii)  $\|\tilde{T}^{-1}\|-\|\Sigma\|\|D\|^{2}>\frac{\|E\|}{2}$,\\
(iii)   $(\alpha_{k})_{k\in \mathbb{N}}$ is nondecreasing with
$\alpha_{1} = 0$ and $0\leq\alpha_{k}\leq\alpha < 1$ for every
$n\geq1$ and $\rho, \theta, \hat{\delta}>0$ are such that
$\hat{\delta}>\frac{\alpha^{2}(1+\alpha)+\alpha\theta}{1-\alpha^{2}}$
and
$0<\rho\leq\rho_{k}<\frac{\hat{\delta}-\alpha[\alpha(1+\alpha)+\alpha\hat{\delta}+\theta]}{\hat{\delta}[1+\alpha(1+\alpha)+\alpha\hat{\delta}+\theta]}$
$\forall k\geq1$.\\
Then the sequence $\{x^{k}\}$ converges to a solution of Problem
(1.1).

\end{thm}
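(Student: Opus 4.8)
The plan is to reproduce the argument of Theorem 3.1 almost verbatim, replacing the scalar metric of (3.4) by the Pock--Chambolle preconditioner. Concretely, I would set
\[
P:=\begin{pmatrix}\tilde{T}^{-1} & D^{\ast}\\ D & \Sigma^{-1}\end{pmatrix},
\]
and first verify that $P$ is positive definite. By Lemma 4.1 (with $A_{1}=\tilde{T}^{-1}$, $A_{2}=\Sigma^{-1}$, $M=D$) this reduces to $\|\Sigma^{1/2}D\tilde{T}^{1/2}\|<1$, which I would derive from condition (ii) exactly as condition (i) of Theorem 3.1 guaranteed positivity of the matrix in (3.4). Endowing $\mathcal{Z}=\mathcal{X}\times\mathcal{Y}$ with $\langle z,z'\rangle_{P}=\langle z,Pz'\rangle$ then gives the Euclidean space $\mathcal{Z}_{P}$ in which the whole analysis takes place.

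Next I would show that Algorithm 3 is, read in $\mathcal{Z}_{P}$, precisely the relaxed inertial forward--backward iteration driven by the same operators $A$ and $B$ as in Lemma 3.1. This is the preconditioned analogue of the derivation (3.6): a direct block computation shows that the updates $\tilde{y}^{k+1}=\mathrm{prox}_{\Sigma h^{\ast}}(\eta^{k}+\Sigma D\xi^{k})$ and $\tilde{x}^{k+1}=\mathrm{prox}_{\tilde{T} g}(\cdots)$ realise $\tilde{z}^{k+1}=(I+P^{-1}\circ A)^{-1}\circ(I-P^{-1}\circ B)(\nu^{k})$, with $\nu^{k}=z^{k}+\alpha_{k}(z^{k}-z^{k-1})$, so that together with the final relaxation step the recursion takes exactly the form (3.7) for $T:=T_{1}\circ T_{2}$, where $T_{1}:=(I+P^{-1}\circ A)^{-1}$ and $T_{2}:=I-P^{-1}\circ B$.

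The core of the proof is the averagedness bookkeeping, carried out in $\mathcal{Z}_{P}$. Since $A$ is maximally monotone, $P^{-1}\circ A$ is maximally monotone on $\mathcal{Z}_{P}$ and its resolvent satisfies $T_{1}\in\mathcal{A}(\mathcal{Z}_{P},\tfrac12)$. For $T_{2}$ I would use the cocoercivity of $\nabla f$ with respect to $E^{-1}$ to establish that $P^{-1}\circ B$ is cocoercive in the $P$-geometry: condition (i), namely $\tilde{T}^{-1}-\tfrac12 E>0$, is the operator inequality that renders the forward step $T_{2}$ averaged, while condition (ii) pins down the constant $T_{2}\in\mathcal{A}(\mathcal{Z}_{P},\tfrac{1}{2\kappa})$ with $\kappa=(\|\tilde{T}^{-1}\|-\|\Sigma\|\|D\|^{2})/\|E\|>\tfrac12$, in exact parallel with Lemma 3.1. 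Lemma 2.2 then yields $T\in\mathcal{A}(\mathcal{Z}_{P},\tfrac1\delta)$ with $\delta=2-\tfrac{1}{2\kappa}\in[1,2)$, so $T$ is nonexpansive and, by the duality (2.1), $\mathrm{Fix}(T)=\mathrm{zer}(A+B)$ consists of primal--dual solutions of Problem (1.1).

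Finally, with $T$ nonexpansive and $\mathrm{Fix}(T)\neq\emptyset$, the iteration is an instance of (3.7), so Lemma 3.2 applies under the inertial hypotheses (iii), giving $\sum_{k}\|z^{k+1}-z^{k}\|^{2}<+\infty$ and $z^{k}\to z^{\star}\in\mathrm{Fix}(T)$; the primal component $x^{k}$ therefore converges to a solution of Problem (1.1). I expect the main obstacle to be the third paragraph, namely transporting the cocoercivity of $\nabla f$ from the Euclidean metric to the $P$-metric and showing that conditions (i) and (ii) together force the averaging constant of $T_{2}$ strictly below $1$; the positive-definiteness of $P$ and the block identification of paragraph two are routine once the correct preconditioned inner product is fixed.
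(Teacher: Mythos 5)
Your proposal is correct and follows essentially the same route as the paper: both identify Algorithm 3 as the inertial forward--backward iteration in the metric of the Pock--Chambolle preconditioner $P$, invoke Lemma 4.1 to get the required positive definiteness, and then run the Theorem 3.1 machinery (averagedness of $T_{1}\circ T_{2}$ plus the inertial Krasnosel'skii--Mann convergence of Lemma 3.2). The only cosmetic difference is bookkeeping: the paper folds the cocoercivity matrix into a single positive-definiteness check, namely $P-\tfrac12 C>0$ with $C=\mathrm{diag}(E,0)$, whereas you verify $P>0$ and the averagedness constant of the forward step separately --- the same argument organized in two steps rather than one.
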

\begin{proof}
It is easy to check that from the condition (i)-(ii),  we can obtain
$\|\tilde{T}^{-1}-\frac{1}{2}E)^{-\frac{1}{2}}D\Sigma^{\frac{1}{2}}\|<1$.

 Set
$$ C:= \left(
  \begin{array}{ccccccc}
    E & 0 \\
    0 & 0 \\

  \end{array}
\right)
$$

 Then from Lemma 4.1, we can know that $P-\frac{1}{2}C$ is positive
 definite. Therefore, with the same proof of Theorem 3.1, we can
 obtain Theorem 4.1.

\end{proof}
For selfadjoint, positive definite maps $\tilde{T}$, $\Psi$, we
consider the following algorithm which we shall refer to as
Preconditioning ADMM$^{+}$(PADMM$^{+}$).

\begin{algorithm}[H]
\caption{Preconditioning ADMM$^{+}$(PADMM$^{+}$).}
\begin{algorithmic}\label{1}
\STATE Iterations ($k\geq0$): Update $x^{k}$, $y^{k}$ as follows
$$
\left\{
\begin{array}{l}
\xi^{k}=x^{k}+\alpha_{k}(x^{k}-x^{k-1}),\\
\eta^{k}=y^{k}+\alpha_{k}(y^{k}-y^{k-1}),\\
z^{k+1}=\arg\min_{w\in\mathcal {Y}}[h(w)+\frac{\|w-(D\xi^{k}+\Psi\eta^{k})\|_{\Psi^{-1}}^{2}}{2}],~~~~~~~~~~~~~~~~~~~~~~~~~~~~~~~~~~~~~~~(4.1a)\\
y^{k+1}=\eta^{k}+\Psi^{-1}(D\xi^{k}-z^{k+1}),~~~~~~~~~~~~~~~~~~~~~~~~~~~~~~~~~~~~~~~~~~~~~~~~~~~~~~~~~~(4.1b)\\
u^{k+1}=(I-\tilde{T}\Psi^{-1})D\xi^{k}+\tilde{T}\Psi^{-1}z^{k+1},~~~~~~~~~~~~~~~~~~~~~~~~~~~~~~~~~~~~~~~~~~~~~~~~~~(4.1c)\\
x^{k+1}=\arg\min_{w\in\mathcal {X}}[g(w)+\langle\nabla
f(\xi^{k}),w\rangle+\frac{\|Dw-u^{k+1}-\tilde{T}
y^{k+1}\|_{\tilde{T}^{-1}}^{2}}{2}].~~~~~~~~~~~~~~~~~(4.1d)
\end{array}
\right.
$$
End for
\end{algorithmic}
\end{algorithm}
\begin{thm}
In the setting of Theorem 3.2 let furthermore  $\nabla \bar{f}$  be
co-coercive w.r.t. a bound, linear, symmetric and positive linear
maps $\bar{E}^{-1}$. If it holds that\\
(i) $\tilde{T}^{-1}-\frac{1}{2}\bar{E}>0;$\\
(ii)  $\|\tilde{T}^{-1}\|-\|\Psi^{-1}\|>\frac{\|\bar{E}\|}{2}$,\\
(iii)   $(\alpha_{k})_{k\in \mathbb{N}}$ is nondecreasing with
$\alpha_{1} = 0$ and $0\leq\alpha_{k}\leq\alpha < 1$ for every
$n\geq1$ and $\rho, \theta, \hat{\delta}>0$ are such that
$\hat{\delta}>\frac{\alpha^{2}(1+\alpha)+\alpha\theta}{1-\alpha^{2}}$
and
$0<\rho\leq\rho_{k}<\frac{\hat{\delta}-\alpha[\alpha(1+\alpha)+\alpha\hat{\delta}+\theta]}{\hat{\delta}[1+\alpha(1+\alpha)+\alpha\hat{\delta}+\theta]}$
$\forall k\geq1$.\\
Then the sequence $\{x^{k}\}$ converges to a solution of Problem
(1.1).

\end{thm}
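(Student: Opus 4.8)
The plan is to mirror the passage from Theorem 3.1 to Theorem 3.2, but now starting from the preconditioned splitting of Theorem 4.1 rather than from Theorem 3.1. The guiding observation is that Theorem 4.2 stands to Theorem 4.1 exactly as Theorem 3.2 stands to Theorem 3.1: PADMM$^{+}$ (Algorithm 4) is obtained from the preconditioned inertial primal-dual splitting (Algorithm 3) by the very same injective change of variables $D$ that produced IADMM$^{+}$ from IPDS. Thus the whole argument reduces to transporting the IADMM$^{+}$ derivation into the preconditioned setting, with the scalar parameters $\mu,\tau$ replaced by the positive-definite maps $\Psi,\tilde{T}$.

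First I would record a preconditioned analogue of Lemma 3.3. Given a Euclidean space $\mathcal{E}$ and the problem $\min_{\lambda}\bar{f}(\lambda)+\bar{g}(\lambda)+h(\lambda)$ with $\nabla\bar{f}$ cocoercive with respect to $\bar{E}^{-1}$, the preconditioned inertial primal-dual iterates (Algorithm 3 with $D=I$ and proximal maps $\Sigma=\Psi^{-1}$, $\tilde{T}$) converge to a primal-dual point, provided $\tilde{T}^{-1}-\frac{1}{2}\bar{E}>0$ and $\|\tilde{T}^{-1}\|-\|\Psi^{-1}\|>\|\bar{E}\|/2$ together with the inertial conditions (iii). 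This is immediate from Theorem 4.1 by specializing to $D=I$: then $\|D\|^{2}=1$ and $\Sigma=\Psi^{-1}$, so condition (ii) of Theorem 4.1 reads $\|\tilde{T}^{-1}\|-\|\Psi^{-1}\|>\|\bar{E}\|/2$, which is precisely condition (ii) of Theorem 4.2, and the Pock–Chambolle bound $\|(\tilde{T}^{-1}-\frac{1}{2}\bar{E})^{-1/2}\Psi^{-1/2}\|<1$ from Lemma 4.1 guarantees positive definiteness of the associated $P-\frac{1}{2}C$, exactly as Lemma 3.3 arose from Theorem 3.1.

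Next I would set $\mathcal{E}=\mathcal{R}=\mathrm{Im}(D)$, equipped with the inner product inherited from $\mathcal{Y}$, and choose $\bar{f}=f\circ D^{-1}$, $\bar{g}=g\circ D^{-1}$, together with $h$. Assumption 3.1 guarantees that $\nabla\bar{f}=\nabla(f\circ D^{-1})$ is cocoercive with respect to $\bar{E}^{-1}$ on $\mathcal{R}$, and Assumption 2.1 yields the qualification condition $0\in\mathrm{ri}(\mathrm{dom}\,h-\mathrm{dom}\,\bar{g})$; hence the hypotheses of the preconditioned lemma hold. Using $(\bar{f}+\bar{g})^{\ast}=(f+g)^{\ast}\circ D^{\ast}$, a point $(\tilde{x},\tilde{y})$ is a primal-dual solution of (2.1) if and only if $(D\tilde{x},\tilde{y})$ is a primal-dual solution of the corresponding transformed saddle problem; therefore convergence of the transformed iterates $(\lambda^{k},y^{k})$ transports back to convergence of $(x^{k},y^{k})$ through $\lambda^{k}=Dx^{k}$.

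Finally I would recover the updates (4.1a)–(4.1d) from the preconditioned primal-dual iterates by the algebraic change of variables used in deriving IADMM$^{+}$ (following Theorem 1 of [2]): the $h^{\ast}$-proximal step becomes the minimization (4.1a) defining $z^{k+1}$, the dual update gives (4.1b), the auxiliary variable of (4.1c) absorbs the term $(I-\tilde{T}\Psi^{-1})D\xi^{k}$, and the primal step produces (4.1d) with the weighted norm $\|\cdot\|_{\tilde{T}^{-1}}^{2}$. The main obstacle is precisely this last verification: in the unpreconditioned case the scalars $\tau,\mu$ commute freely, whereas here $\tilde{T}$ and $\Psi$ are noncommuting positive-definite maps, so one must track the order of the operators and confirm that the weighted proximal norms $\|\cdot\|_{\Psi^{-1}}^{2}$ and $\|\cdot\|_{\tilde{T}^{-1}}^{2}$ in (4.1a) and (4.1d) emerge correctly from completing the square in the transformed proximal subproblems. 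Once this identification is checked, the convergence of $\{x^{k}\}$ to a solution of Problem (1.1) follows directly from the preconditioned lemma.
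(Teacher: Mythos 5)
Your proposal takes essentially the same route as the paper: the paper's proof of Theorem 4.2 likewise derives $\|(\tilde{T}^{-1}-\tfrac{1}{2}\bar{E})^{-1/2}\Psi^{-1/2}\|<1$ from conditions (i)--(ii), invokes Lemma 4.1 to conclude that $\bar{P}-\tfrac{1}{2}\bar{C}$ is positive definite with $\bar{P}=\left(\begin{array}{cc}\tilde{T}^{-1} & I\\ I & \Psi\end{array}\right)$, $\bar{C}=\left(\begin{array}{cc}\bar{E} & 0\\ 0 & 0\end{array}\right)$, and then repeats the proof of Theorem 3.2 (i.e.\ the Lemma 3.3 mechanism with $\mathcal{E}=\mathcal{R}$, $\bar{f}=f\circ D^{-1}$, $\bar{g}=g\circ D^{-1}$, the identification of primal-dual points via $(\bar{f}+\bar{g})^{\ast}=(f+g)^{\ast}\circ D^{\ast}$, and the change-of-variables recovery of the updates (4.1a)--(4.1d)). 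Your write-up is a fleshed-out version of exactly that argument, with the only addition being your explicit flag of the noncommutativity of $\tilde{T}$ and $\Psi$ in the final algebraic verification, which the paper passes over silently.
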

\begin{proof}
It is easy to check that from the condition (i)-(ii),  we can obtain
$\|(\tilde{T}^{-1}-\frac{1}{2}\bar{E})^{-\frac{1}{2}}\Psi^{-\frac{1}{2}}\|<1$.

 Set
$$ \bar{P}:= \left(
  \begin{array}{ccccccc}
    \tilde{T}^{-1} & I \\
    I & \Psi \\

  \end{array}
\right), \bar{C}:= \left(
  \begin{array}{ccccccc}
    \bar{E} & 0 \\
    0 & 0 \\

  \end{array}
\right).
$$

 Then from Lemma 4.1, we can know that $\bar{P}-\frac{1}{2}\bar{C}$ is positive
 definite. Therefore, with the same proof of Theorem 3.2, we can
 obtain Theorem 4.2.

\end{proof}
\subsection{Diagonal Preconditioning}
In this section, we show how we can choose pointwise step sizes for
both the primal and the dual variables that will ensure the
convergence of the algorithm. The next result is an adaption of the
preconditioner proposed in [11].

\begin{lem}
Assume that $\nabla f$ is co-coercive with respect to diagonal
matrices $E^{-1}$, where $E=diag(e_{1},\cdots,e_{n})$. Fix $\gamma
\in (0, 2)$, $r > 0, s \in [0, 2]$ and let $\tilde{T} =
diag(\tau_{1},\cdots , \tau_{n})$ and $\Psi= diag(\varphi_{1},
\cdots , \varphi_{m})$ with

$$\tau_{j}=\frac{1}{\frac{e_{j}}{\gamma}+\sum_{i=1}^{m}|D_{i,j}|^{2-s}},\varphi_{i}=\frac{1}{r}\sum_{j=1}^{n}|D_{i,j}|^{s},\eqno{(4.2)}$$
then it holds that\\
$$\tilde{T}^{-1}-\frac{1}{2}E>0,\Psi>0,\eqno{(4.3)}$$
$$\|\tilde{T}^{-1}\|-\|\Psi^{-1}\|>\frac{\|\bar{E}\|}{2}.\eqno{(4.4)}$$

\end{lem}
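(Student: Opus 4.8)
The plan is to exploit that all the matrices whose positivity or operator norm is at issue here, namely $\tilde T^{-1}$, $\Psi$, $\tilde T^{-1}-\frac12E$ and $\bar E$, are diagonal, so both (4.3) and (4.4) reduce to scalar inequalities among diagonal entries, with the two operator norms in (4.4) becoming coordinatewise maxima. Writing $\tilde T^{-1}=\mathrm{diag}(\tau_1^{-1},\dots,\tau_n^{-1})$ with $\tau_j^{-1}=\frac{e_j}{\gamma}+\sum_{i=1}^m|D_{i,j}|^{2-s}$ and $\Psi=\mathrm{diag}(\varphi_1,\dots,\varphi_m)$ with $\varphi_i=\frac1r\sum_{j=1}^n|D_{i,j}|^s$, one has $\|\tilde T^{-1}\|=\max_j\tau_j^{-1}$ and $\|\Psi^{-1}\|=\max_i\varphi_i^{-1}=r\,\max_i\big(\sum_{j}|D_{i,j}|^s\big)^{-1}$. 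I would treat $\bar E=E$ throughout, as the hypothesis (cocoercivity of $\nabla f$ with respect to $E^{-1}$) intends.

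First I would dispatch (4.3), which is routine. The matrix $\tilde T^{-1}-\frac12E$ is diagonal with $j$-th entry
$$\tau_j^{-1}-\tfrac12 e_j=e_j\Big(\tfrac1\gamma-\tfrac12\Big)+\sum_{i=1}^m|D_{i,j}|^{2-s}.$$
Since $\gamma\in(0,2)$ gives $\frac1\gamma-\frac12>0$, and since $E>0$ forces $e_j>0$ while each $|D_{i,j}|^{2-s}\ge0$, every diagonal entry is strictly positive, so $\tilde T^{-1}-\frac12E>0$. Likewise $\Psi$ has entries $\varphi_i=\frac1r\sum_{j}|D_{i,j}|^s$, which are strictly positive as soon as $r>0$ and no row of $D$ vanishes, whence $\Psi>0$.

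The substance of the lemma is (4.4). The key leverage is the cocoercivity term $e_j/\gamma$ built into $\tau_j^{-1}$: picking an index $j^\star$ attaining $\|E\|=\max_j e_j$ gives
$$\|\tilde T^{-1}\|\ge \tau_{j^\star}^{-1}\ge\frac{e_{j^\star}}{\gamma}=\frac{\|E\|}{2}+\|E\|\Big(\frac1\gamma-\frac12\Big),$$
so $\|\tilde T^{-1}\|-\frac12\|E\|$ is bounded below by the strictly positive slack $\|E\|(\frac1\gamma-\frac12)$ together with whatever the column sums $\sum_i|D_{i,j}|^{2-s}$ contribute. Thus (4.4) will follow once I establish that this lower bound dominates $\|\Psi^{-1}\|$, i.e.
$$r\,\max_i\Big(\sum_{j}|D_{i,j}|^s\Big)^{-1}\le\|E\|\Big(\frac1\gamma-\frac12\Big)+\Big(\|\tilde T^{-1}\|-\frac{\|E\|}{\gamma}\Big).$$
To connect the two sides I would invoke the adaptation of the preconditioner of [11], using the paired exponents $s$ and $2-s$ appearing in (4.2) to tie the dual weights $\sum_j|D_{i,j}|^s$ to the primal weights $\sum_i|D_{i,j}|^{2-s}$ through a Young / weighted Cauchy--Schwarz estimate before comparing the two maxima.

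I expect this last comparison to be the main obstacle, for two compounding reasons. First, $\|\tilde T^{-1}\|$ is a maximum over the primal index $j\in\{1,\dots,n\}$ whereas $\|\Psi^{-1}\|$ is a maximum over the dual index $i\in\{1,\dots,m\}$, and the defining sums run over the complementary index; reconciling the two maxima is precisely where the paired exponents $s,\,2-s$ of the Pock--Chambolle-type weighting must be used rather than a naive termwise comparison. Second, the bound depends genuinely on the free scale parameter $r$, so the delicate step is to make explicit the admissible range of $r$ (jointly with $\gamma$ and $s$) that keeps $\|\Psi^{-1}\|=r\,\max_i(\sum_j|D_{i,j}|^s)^{-1}$ below the available slack, and to verify that the choices (4.2) indeed land inside that range; only then does (4.4) close. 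Once this quantitative bookkeeping is carried out, both conclusions of the lemma follow, and, combined with Lemma 4.1, they supply exactly the hypotheses (i)--(ii) of Theorem 4.2.
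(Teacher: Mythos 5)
Your treatment of (4.3) is fine and coincides with the paper's: for diagonal matrices the positivity is checked entrywise, using $\tau_j^{-1}-\tfrac12 e_j>\tau_j^{-1}-\tfrac{e_j}{\gamma}=\sum_{i=1}^m|D_{i,j}|^{2-s}\ge 0$ and $r>0$. The gap is in (4.4). You read (4.4) as a literal inequality between numbers, $\max_j\tau_j^{-1}-\max_i\varphi_i^{-1}>\tfrac12\|E\|$, and your proof of it is never actually closed: you explicitly defer the decisive step to "quantitative bookkeeping" determining an admissible range of $r$ and a reconciliation of two maxima running over different index sets. That step cannot be carried out, because the literal inequality is false under the lemma's hypotheses, in which $\gamma\in(0,2)$, $r>0$, $s\in[0,2]$ are \emph{arbitrary given} parameters, not quantities you are free to tune. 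Concretely, take $n=m=1$, $D=(0.1)$, $s=1$, $e_1=1$, $\gamma=1$, $r=1$: then $\|\tilde T^{-1}\|=1+0.1=1.1$ while $\|\Psi^{-1}\|=r/|D_{1,1}|=10$, so $\|\tilde T^{-1}\|-\|\Psi^{-1}\|=-8.9$, nowhere near exceeding $\tfrac12\|E\|=0.5$. So no Young or weighted Cauchy--Schwarz comparison of the two maxima can rescue this route.

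What the paper proves instead is the \emph{operative} content of (4.4), namely the contraction condition (4.5), $\|\Psi^{-1/2}D(\tilde T^{-1}-\tfrac12E)^{-1/2}\|<1$, which is exactly what Theorem 4.2 needs through Lemma 4.1 to conclude that the block preconditioner minus half the cocoercivity matrix is positive definite (the paper's asserted "equivalence" of (4.4) and (4.5) is loose -- as your counterexample-prone reading shows, they are not equivalent as stated -- but the proof given is a proof of (4.5)). The mechanism is the Pock--Chambolle diagonal-preconditioner computation: expand $\|\Psi^{-1/2}D(\tilde T^{-1}-\tfrac12E)^{-1/2}x\|^2$ coordinatewise, split $|D_{i,j}|=|D_{i,j}|^{s/2}\,|D_{i,j}|^{1-s/2}$, apply Cauchy--Schwarz in $j$, and then the double sums collapse \emph{exactly} against the definitions (4.2), since $\varphi_i^{-1}\sum_j|D_{i,j}|^s$ is a constant and $\tau_j^{-1}-\tfrac12 e_j>\sum_i|D_{i,j}|^{2-s}$; after exchanging the order of summation one lands on $\le\|x\|^2$ with no comparison of maxima over distinct index sets and no additional constraint coupling $r$, $s$, $\gamma$. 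Your instinct that the paired exponents $s$ and $2-s$ must enter through a Cauchy--Schwarz estimate is correct, but it has to be deployed inside the operator-norm bound (4.5) applied to $\Psi^{-1/2}D(\tilde T^{-1}-\tfrac12E)^{-1/2}$, where the cross terms cancel structurally, rather than to force an inequality between $\|\tilde T^{-1}\|$ and $\|\Psi^{-1}\|$ as scalars.
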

\begin{proof}
The first two conditions follow from the fact that for diagonal
matrices, the (4.3) can be written pointwise. By the definition of
$\tau_{j}$ , and $\varphi_{i}$ it follows that for any $s \in [0,
2]$ and using the convention that $0^{0} = 0$,
$$\frac{1}{\tau_{j}}-\frac{e_{j}}{2}>\frac{1}{\tau_{j}}-\frac{e_{j}}{\gamma}=r\sum_{i=1}^{m}|D_{i,j}|^{2-s}\geq0,$$
and
$$\varphi_{i}=\frac{1}{r}\sum_{j=1}^{n}|D_{i,j}|^{s}\geq0.$$
 We will prove(4.4). It is easy to see
 the proof of (4.4) is equivalent to the proof of
$$\|\Psi^{-\frac{1}{2}}D(\tilde{T}^{-1}-\frac{1}{2}E)^{-\frac{1}{2}}\|<1.\eqno{(4.5)}$$
So we first show (4.5), For any  $s \in [0, 2]$,
\begin{align*}
\|\Psi^{-\frac{1}{2}}D(\tilde{T}^{-1}-\frac{1}{2}E)^{-\frac{1}{2}}x\|^{2}&=\sum_{i=1}^{m}(\sum_{j=1}^{n}\frac{1}{\sqrt{\varphi_{i}}}D_{i,j}\frac{1}{\sqrt{\frac{1}{\tau_{j}}-\frac{e_{j}}{2}}}x_{j})^{2}\\
&=\sum_{i=1}^{m}\frac{1}{\varphi_{i}}(\sum_{j=1}^{n}D_{i,j}\frac{1}{\sqrt{\frac{1}{\tau_{j}}-\frac{e_{j}}{2}}}x_{j})^{2}\\
&\leq\sum_{i=1}^{m}\frac{1}{\varphi_{i}}(\sum_{j=1}^{n}|D_{i,j}|^{\frac{s}{2}}|D_{i,j}|^{1-\frac{s}{2}}\frac{1}{\sqrt{\frac{1}{\tau_{j}}-\frac{e_{j}}{2}}}x_{j})^{2}\\
&\leq\sum_{i=1}^{m}\frac{1}{\varphi_{i}}(\sum_{j=1}^{n}|D_{i,j}|^{s})(\sum_{j=1}^{n}|D_{i,j}|^{2-s}\frac{1}{\sqrt{\frac{1}{\tau_{j}}-\frac{e_{j}}{2}}}x_{j}^{2}).\tag{4.6}
\end{align*}
By definition of $\tau_{j}$ and $\varphi_{i}$ , and introducing
$r>0$, the above estimate can be simplified to
\begin{align*}
&\sum_{i=1}^{m}\frac{\frac{1}{r}}{\varphi_{i}}(\sum_{j=1}^{n}|D_{i,j}|^{s})(\sum_{j=1}^{n}|D_{i,j}|^{2-s}\frac{r}{\sqrt{\frac{1}{\tau_{j}}-\frac{e_{j}}{2}}}x_{j}^{2})\\
&=\sum_{i=1}^{m}\sum_{j=1}^{n}|D_{i,j}|^{2-s}\frac{r}{\sqrt{\frac{1}{\tau_{j}}-\frac{e_{j}}{2}}}x_{j}^{2}\\
&\leq\sum_{j=1}^{n}(\sum_{i=1}^{m}|D_{i,j}|^{2-s})\frac{r}{\sqrt{\frac{1}{\tau_{j}}-\frac{e_{j}}{2}}}x_{j}^{2}=\|x\|^{2}.\tag{4.7}
\end{align*}
Using the above estimate in the definition of the operator norm, we
obtain the desired result
\begin{align*}
&\|\Psi^{-\frac{1}{2}}D(\tilde{T}^{-1}-\frac{1}{2}E)^{-\frac{1}{2}}\|^{2}\\
&=\sup_{x\neq0}\frac{\|\Psi^{-\frac{1}{2}}D(\tilde{T}^{-1}-\frac{1}{2}E)^{-\frac{1}{2}}x\|^{2}}{\|x\|^{2}}\leq1.\tag{4.8}
\end{align*}

\end{proof}

\begin{rmk}
In particular, for $D=I_{\mathcal {Y}}$,  we obtain that
$$\tau_{j}=\frac{1}{\frac{e_{j}}{\gamma}+n},\varphi_{i}=\frac{1}{r}n,\eqno{(4.9)}$$
then it also holds (4.3) and (4.4).
\end{rmk}

For $D=I_{\mathcal {Y}}$, from Theorem 4.2, we know that the
PADMM$^{+}$ iterates are generated by the action of a nonexpansive
operator. Then by Lemma 2.3 we know that a stochastic coordinate
descent version of the PADMM$^{+}$ converges towards a primal-dual
point. This result will be exploited in two directions: first, we
describe a stochastic minibatch algorithm, where a large dataset is
randomly split into smaller chunks. Second, we develop an
asynchronous version of the PADMM$^{+}$ in the context where it is
distributed on a graph.
\section{Coordinate descent}
\subsection{Randomized krasnosel'skii-mann iterations}
 Consider the space
$\mathcal{Z}=\mathcal{Z}_{1}\times\cdots\times\mathcal{Z}_{J}$ for
some $J\in\mathbb{N}^{\ast}$ where for any $j$, $\mathcal{Z}_{j}$ is
a Euclidean space. For $\mathcal{Z}$ equipped with the scalar
product $\langle x,y\rangle=\sum_{j=1}^{J}\langle
x_{j},y_{j}\rangle_{\mathcal{Z}_{j}}$ where $\langle
\cdot,\cdot\rangle_{\mathcal{Z}_{j}}$ is the scalar product in
$\mathcal{Z}_{j}$. For $j\in \{1,\cdots,J\}$ , let $T_{j}:
\mathcal{Z}\rightarrow\mathcal{Z}_{j}$ be the components of the
output of operator $T : \mathcal{Z}\rightarrow\mathcal{Z}$
corresponding to $\mathcal{Z}_{j}$ , so, we have
$Tx=(T_{1}x,\cdots,T_{J}x)$. Let $2^{\mathcal{J}}$ be the power set
of $\mathcal{J}=\{1,\cdots,J\}$. For any $\vartheta\in
2^{\mathcal{J}}$, we donate the operator $\hat{T}^{\vartheta}:
\mathcal{Z}\rightarrow\mathcal{Z}$ by
$\hat{T}^{\vartheta}_{j}x=T_{j}x$ for $j\in\vartheta$ and
$\hat{T}^{\vartheta}_{j}x=x_{j}$ for otherwise. On some probability
space $(\Omega, \mathcal{F}, \mathbb{P})$, we introduce a random
i.i.d. sequence $(\zeta^{k})_{k\in\mathbb{N}^{\ast}}$ such that
$\zeta^{k}:\Omega\rightarrow 2^{\mathcal{J}}$ i.e.
$\zeta^{k}(\omega)$ is a subset of $\mathcal{J}$. Assume that the
following holds:
$$\forall j\in \mathcal{J},\exists \vartheta\in 2^{\mathcal{J}}, j\in\vartheta~~~ and ~~~\mathbb{P}(\zeta_{1}=\vartheta)>0.\eqno{(5.1)}$$

\begin{lem}
(Theorem 3 of [2]). Let $T: \mathcal{Z}\rightarrow\mathcal{Z}$ be
$\tilde{a}$-averaged and Fix(T)$\neq\emptyset$. Let
$(\zeta^{k})_{k\in\mathbb{N}^{\ast}}$ be a random i.i.d. sequence on
$2^{\mathcal{J}}$ such that Condition (5.1) holds. If for all $k$,
sequence $(\rho_{k})_{k\in\mathbb{N}}$ satisfies
$$0<\liminf_{k\rightarrow\infty}\rho_{k}\leq\limsup_{k\rightarrow\infty}\rho_{k}<\frac{1}{\tilde{a}}.$$
Then, almost surely, the iterated sequence

$$x^{k+1}=x^{k}+\rho_{k}(\hat{T}^{(\zeta^{k+1})}x^{k}-x^{k})\eqno{(5.2)}$$
converges to some point in Fix($T$).
\end{lem}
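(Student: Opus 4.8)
The plan is to recast the randomized iteration (5.2) as a stochastically Fej\'er-monotone sequence with respect to a suitably reweighted Euclidean norm, and then to combine an almost-supermartingale convergence argument with the averagedness of $T$ and a demiclosedness argument in finite dimension. First I would exploit Condition (5.1): for each coordinate $j$ it guarantees $p_{j}:=\mathbb{P}(j\in\zeta^{1})>0$. I would then introduce the weighted inner product $\langle x,y\rangle_{\mathbf{q}}=\sum_{j=1}^{J}q_{j}\langle x_{j},y_{j}\rangle_{\mathcal{Z}_{j}}$ with weights $q_{j}=1/p_{j}$, and let $\mathcal{F}_{k}$ be the $\sigma$-field generated by $\zeta^{1},\dots,\zeta^{k}$. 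Fixing any $x^{\ast}\in Fix(T)$ and writing $d^{k}:=Tx^{k}-x^{k}$, I note that coordinatewise the update $x^{k+1}_{j}$ equals the relaxed step $x^{k}_{j}+\rho_{k}(T_{j}x^{k}-x^{k}_{j})$ with probability $p_{j}$ and equals $x^{k}_{j}$ otherwise. Taking the conditional expectation of $\|x^{k+1}-x^{\ast}\|^{2}_{\mathbf{q}}$ given $\mathcal{F}_{k}$ and using the identity $q_{j}p_{j}=1$, I expect the weights to telescope, yielding
$$\mathbb{E}\bigl[\|x^{k+1}-x^{\ast}\|^{2}_{\mathbf{q}}\mid\mathcal{F}_{k}\bigr]=\|x^{k}+\rho_{k}d^{k}-x^{\ast}\|^{2}+\|x^{k}-x^{\ast}\|^{2}_{\mathbf{q}}-\|x^{k}-x^{\ast}\|^{2}.$$

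Second, I would control the deterministic full step $\|x^{k}+\rho_{k}d^{k}-x^{\ast}\|^{2}$. Expanding the square and invoking the $\tilde{a}$-averagedness inequality (2.3) with $Tx^{\ast}=x^{\ast}$ gives $2\langle d^{k},x^{k}-x^{\ast}\rangle\leq-\tilde{a}^{-1}\|d^{k}\|^{2}$, whence $\|x^{k}+\rho_{k}d^{k}-x^{\ast}\|^{2}\leq\|x^{k}-x^{\ast}\|^{2}-\rho_{k}(\tilde{a}^{-1}-\rho_{k})\|d^{k}\|^{2}$. Substituting into the identity above cancels the two unweighted terms and leaves the almost-supermartingale relation
$$\mathbb{E}\bigl[\|x^{k+1}-x^{\ast}\|^{2}_{\mathbf{q}}\mid\mathcal{F}_{k}\bigr]\leq\|x^{k}-x^{\ast}\|^{2}_{\mathbf{q}}-\rho_{k}\Bigl(\tfrac{1}{\tilde{a}}-\rho_{k}\Bigr)\|d^{k}\|^{2}.$$
The hypotheses $0<\liminf\rho_{k}\leq\limsup\rho_{k}<1/\tilde{a}$ ensure $\rho_{k}(\tilde{a}^{-1}-\rho_{k})\geq c>0$ for all large $k$, so the subtracted term is eventually nonnegative. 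An almost-supermartingale (Robbins--Siegmund) convergence theorem then yields, almost surely, that $(\|x^{k}-x^{\ast}\|_{\mathbf{q}})_{k}$ converges and that $\sum_{k}\|d^{k}\|^{2}<+\infty$; in particular $\|(I-T)x^{k}\|\to0$ almost surely.

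Finally I would pass from these per-$x^{\ast}$ statements to convergence of the whole sequence to a single (random) fixed point. Since $\|x^{k}-x^{\ast}\|_{\mathbf{q}}$ converges a.s., $(x^{k})$ is bounded a.s. and thus has cluster points; because $\mathcal{Z}$ is finite-dimensional and $I-T$ is continuous, $\|(I-T)x^{k}\|\to0$ forces every cluster point to lie in $Fix(T)$. To obtain a single limit I would select a countable dense subset $F$ of $Fix(T)$, discard the countably many exceptional null events so that $\|x^{k}-x^{\ast}\|_{\mathbf{q}}$ converges simultaneously for every $x^{\ast}\in F$, and then run the classical Fej\'er argument: a cluster point $\bar{x}\in Fix(T)$ makes the (existing) limit of $\|x^{k}-\bar{x}\|_{\mathbf{q}}$ equal to $0$ along a subsequence, hence the whole sequence converges to $\bar{x}$. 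I expect this last measurability-and-uniqueness step---propagating the almost-sure convergence uniformly over all fixed points and combining it with the demiclosedness of $I-T$---to be the main obstacle, while the coordinatewise conditional-expectation computation and the averaged-operator estimate are comparatively routine.
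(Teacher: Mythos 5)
Your proposal is correct and follows essentially the same route as the paper's (i.e.\ reference [2]'s, mirrored in the paper's proof of Theorem 5.1): the weighted inner product with weights $q_{j}=1/\mathbb{P}(j\in\zeta^{1})$, the coordinatewise conditional-expectation identity producing stochastic Fej\'er monotonicity, the averagedness inequality yielding summability of $\|(I-T)x^{k}\|^{2}$, and the countable-dense-subset plus cluster-point argument to extract a single almost-sure limit in $Fix(T)$. The only cosmetic difference is that you invoke Robbins--Siegmund where the paper uses its Lemma 2.4 together with a Markov/Borel--Cantelli step, and you should note that the liminf/limsup hypothesis on $(\rho_{k})$ only makes the coefficient $\rho_{k}(\tilde{a}^{-1}-\rho_{k})$ positive for all sufficiently large $k$, so the supermartingale argument should be started from such an index (a routine adjustment).
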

In particular, if  $T$ is nonexpansive,  and for all $k$, sequence
$(\rho_{k})_{k\in\mathbb{N}}$ satisfies
$$0<\liminf_{k}\rho_{k}\leq\limsup_{k}\rho_{k}<1.$$
We can know the iterated sequence (5.2) converges to some point in
Fix($T$ ). Then we obtain the following theorem.
\begin{thm}
 Let $T: \mathcal{Z}\rightarrow\mathcal{Z}$ be
nonexpansive and Fix(T)$\neq\emptyset$. Let
$(\zeta^{k})_{k\in\mathbb{N}^{\ast}}$ be a random i.i.d. sequence on
$2^{\mathcal{J}}$ such that Condition (5.1) holds. We consider the
following iterative scheme:
$$x^{k+1}=x^{k}+\alpha_{k}(x^{k}-x^{k-1})+\rho_{k}[\hat{T}^{(\zeta^{k+1})}(x^{k}+\alpha_{k}(x^{k}-x^{k-1}))-x^{k}-\alpha_{k}(x^{k}-x^{k-1})],\eqno{(5.3)}$$
where $x^{0}$; $x^{1}$ are arbitrarily chosen in $\mathcal{Z}$,
$(\alpha_{k})_{k\in\mathbb{N}}$ is nondecreasing with $\alpha_{1} =
0$ and $0\leq \alpha_{k}\leq\alpha < 1$ for every $k \geq 1$ and
$\rho; \theta; \hat{\delta}
> 0$ are such that\\
(i) $\hat{\delta}>\frac{\alpha^{2}(1+\alpha)+\alpha\theta}{1-\alpha^{2}}$;\\
(ii)
$0<\rho\leq\rho_{k}<\frac{\hat{\delta}-\alpha[\alpha(1+\alpha)+\alpha\hat{\delta}+\theta]}{\hat{\delta}[1+\alpha(1+\alpha)+\alpha\hat{\delta}+\theta]}$
$\forall k\geq1$.\\
 Then, almost surely, the iterated sequence $\{x^{k}\}$
converges to some point in Fix($T$).
\end{thm}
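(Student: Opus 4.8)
The plan is to read (5.3) as an \emph{inertial, randomized} Krasnosel'ski\u{i}--Mann iteration and to fuse the deterministic inertial device behind Lemma 3.2 (from [9]) with the block-coordinate averaging trick behind Lemma 5.1 (from [2]), carrying the whole argument into the almost sure world by a Robbins--Siegmund supermartingale theorem. First I would fix the bookkeeping. Write $y^{k}:=x^{k}+\alpha_{k}(x^{k}-x^{k-1})$, so that (5.3) reads $x^{k+1}=y^{k}+\rho_{k}(\hat{T}^{(\zeta^{k+1})}y^{k}-y^{k})$, and set $\mathcal{F}_{k}:=\sigma(\zeta^{1},\dots,\zeta^{k})$; then $x^{k},x^{k-1},y^{k}$ are $\mathcal{F}_{k}$-measurable while $x^{k+1}$ brings in the fresh draw $\zeta^{k+1}$. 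Condition (5.1) forces $p_{j}:=\mathbb{P}(j\in\zeta_{1})>0$, so I equip $\mathcal{Z}$ with the weighted inner product $\langle u,v\rangle_{W}:=\sum_{j=1}^{J}p_{j}^{-1}\langle u_{j},v_{j}\rangle_{\mathcal{Z}_{j}}$ and norm $\|\cdot\|_{W}$. The weighting makes the coordinate selection unbiased: since $\hat{T}^{(\zeta^{k+1})}_{j}y^{k}-y^{k}_{j}$ equals $T_{j}y^{k}-y^{k}_{j}$ with probability $p_{j}$ and vanishes otherwise, a direct computation gives, for every $z\in\mathrm{Fix}(T)$,
\[
\mathbb{E}\big[\langle y^{k}-z,\hat{T}^{(\zeta^{k+1})}y^{k}-y^{k}\rangle_{W}\mid\mathcal{F}_{k}\big]=\langle y^{k}-z,Ty^{k}-y^{k}\rangle,\qquad \mathbb{E}\big[\|\hat{T}^{(\zeta^{k+1})}y^{k}-y^{k}\|_{W}^{2}\mid\mathcal{F}_{k}\big]=\|Ty^{k}-y^{k}\|^{2},
\]
the right-hand sides being in the ordinary norm.

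Second, I would derive the governing inequality. Expanding $\|x^{k+1}-z\|_{W}^{2}$, taking $\mathbb{E}[\cdot\mid\mathcal{F}_{k}]$, and inserting the two identities yields $\mathbb{E}[\|x^{k+1}-z\|_{W}^{2}\mid\mathcal{F}_{k}]=\|y^{k}-z\|_{W}^{2}+2\rho_{k}\langle y^{k}-z,Ty^{k}-y^{k}\rangle+\rho_{k}^{2}\|Ty^{k}-y^{k}\|^{2}$. Because $z$ is a fixed point of the nonexpansive $T$, the polarization identity together with $\|Ty^{k}-z\|\le\|y^{k}-z\|$ gives $2\langle y^{k}-z,Ty^{k}-y^{k}\rangle\le-\|Ty^{k}-y^{k}\|^{2}$, so
\[
\mathbb{E}\big[\|x^{k+1}-z\|_{W}^{2}\mid\mathcal{F}_{k}\big]\le\|y^{k}-z\|_{W}^{2}-\rho_{k}(1-\rho_{k})\|Ty^{k}-y^{k}\|^{2}.
\]
Conditions (i)--(ii) force $0<\rho\le\rho_{k}<1$, so the margin is genuinely negative. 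Writing $\varphi^{k}:=\|x^{k}-z\|_{W}^{2}$ and expanding $\|y^{k}-z\|_{W}^{2}=\|(1+\alpha_{k})(x^{k}-z)-\alpha_{k}(x^{k-1}-z)\|_{W}^{2}$ by the affine-combination polarization identity (the extension of Lemma 2.3 to coefficients summing to one) produces the inertial recursion
\[
\mathbb{E}\big[\varphi^{k+1}\mid\mathcal{F}_{k}\big]\le(1+\alpha_{k})\varphi^{k}-\alpha_{k}\varphi^{k-1}+\alpha_{k}(1+\alpha_{k})\|x^{k}-x^{k-1}\|_{W}^{2}-\rho_{k}(1-\rho_{k})\|Ty^{k}-y^{k}\|^{2}.
\]

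Third comes the heart of the matter: lifting the deterministic inertial analysis of [9] and Lemma 2.4 to the stochastic setting. Following that technique I introduce an energy of the form $\mathcal{E}^{k}:=\varphi^{k}-\alpha_{k}\varphi^{k-1}+\theta\|x^{k}-x^{k-1}\|_{W}^{2}$ (the slack $\hat{\delta}$ and the constant $\theta$ being exactly the quantities in (i)--(ii)), and use $x^{k+1}-x^{k}=\rho_{k}(\hat{T}^{(\zeta^{k+1})}y^{k}-y^{k})+\alpha_{k}(x^{k}-x^{k-1})$ to bound $\|x^{k+1}-x^{k}\|_{W}^{2}$, in conditional mean, by a multiple of $\|Ty^{k}-y^{k}\|^{2}$ plus $\|x^{k}-x^{k-1}\|^{2}$. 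The algebraic purpose of (i)--(ii) is precisely to force the combination into the Robbins--Siegmund form $\mathbb{E}[\mathcal{E}^{k+1}\mid\mathcal{F}_{k}]\le\mathcal{E}^{k}-\beta_{k}$ with $\beta_{k}\ge0$ a margin dominating both $\|x^{k}-x^{k-1}\|^{2}$ and $\|Ty^{k}-y^{k}\|^{2}$. The supermartingale theorem then gives, almost surely, that $(\mathcal{E}^{k})$ converges, that $\sum_{k}\|x^{k+1}-x^{k}\|^{2}<+\infty$, and (as in Lemma 2.4(ii)) that $\lim_{k}\|x^{k}-z\|_{W}$ exists for each fixed $z\in\mathrm{Fix}(T)$. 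I expect this to be the main obstacle: the negative coefficient $-\alpha_{k}\varphi^{k-1}$ destroys plain Fej\'er monotonicity, so the inertial surplus $\alpha_{k}(1+\alpha_{k})\|x^{k}-x^{k-1}\|^{2}$ must be absorbed by the averaging margin, and one must check that the constants in $\mathcal{E}^{k}$ close up into the Robbins--Siegmund format \emph{with conditional expectations} rather than deterministic inequalities.

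Finally, summability of the increments gives $\|x^{k+1}-x^{k}\|\to0$ almost surely, and then, via $y^{k}-x^{k}=\alpha_{k}(x^{k}-x^{k-1})$ and $\sum_{k}\|Ty^{k}-y^{k}\|^{2}<+\infty$, the asymptotic regularity $\|Tx^{k}-x^{k}\|\to0$. Since $\mathcal{Z}$ is finite-dimensional, $I-T$ is continuous, so every cluster point of $(x^{k})$ lies in $\mathrm{Fix}(T)$. Choosing a countable dense subset of $\mathrm{Fix}(T)$ and intersecting the corresponding almost sure events, the quasi-Fej\'er property ($\lim_{k}\|x^{k}-z\|_{W}$ exists for each such $z$, hence for every $z\in\mathrm{Fix}(T)$ by density and boundedness) together with ``all cluster points in $\mathrm{Fix}(T)$'' forces the whole sequence to converge almost surely to a single point of $\mathrm{Fix}(T)$, which is the claim.
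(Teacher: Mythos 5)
Your proposal follows essentially the same route as the paper's own proof: the same $p_j^{-1}$-weighted norm that makes the random coordinate selection unbiased, the same conditional-expectation expansion combined with nonexpansiveness and Lemma 2.3, the same Alvarez--Attouch-type energy sequence governed by the constants $\theta,\hat{\delta}$ of conditions (i)--(ii), and the same finish (summable increments, a.s.\ convergence of $\||x^{k}-z\||$ for each fixed point, countable dense subset, asymptotic regularity, uniqueness of the cluster point). The only real difference is in packaging, and it works in your favor: you invoke Robbins--Siegmund to handle the supermartingale bookkeeping and to extract $\sum_{k}\|Ty^{k}-y^{k}\|^{2}<\infty$ almost surely in one stroke, whereas the paper argues pathwise via its Lemma 2.4 and then resorts to Markov's inequality and Borel--Cantelli for the asymptotic-regularity step (a passage that is stated rather loosely there), so your version is if anything the cleaner rigorization of the same argument.
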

\begin{proof}
Let us start with the remark that, due to the choice of
$\hat{\delta}$, $\rho_{k}\in(0,1)$ for every $k \geq1$. Set
$p_{\vartheta}=\mathbb{P}(\zeta_{1}=\vartheta)$ for any
$\vartheta\in 2^{\mathcal{J}}$. Denote by $\|x\|^{2} = \langle x,
x\rangle$ the squared norm in $\mathcal{Z}$. Define a new inner
product $x\bullet y=\sum_{j=1}^{J}q_{j}\langle x_{j},
y_{j}\rangle_{j}$ on $\mathcal{Z}$ where
$q_{j}^{-1}=\sum_{\vartheta\in2^{\mathcal{J}}}p_{\vartheta}\mathbf{1}_{\{j\in\vartheta\}}$
and let $\||x\||^{2}= x\bullet x$ be its associated squared norm.
Denote by $w^{k}=x^{k}+\alpha_{k}(x^{k}-x^{k-1})$. Consider any
$\tilde{x}\in Fix(T)$. It follows from Lemma 2.3 and conditionally
to the sigma-field $\mathcal {F}^{k} = \sigma(\zeta_{1}, . . . ,
\zeta^{k})$ we have
\begin{align*}
\mathbb{E}[\||x^{k+1}-\tilde{x}\||^{2}|\mathcal
{F}^{k}]&=\sum_{\vartheta\in2^{\mathcal{J}}}p_{\vartheta}\||w^{k}+\rho_{k}[\hat{T}^{(\zeta^{k+1})}w^{k}-w^{k}]-\tilde{x}\||^{2}\\
&=(1-\rho_{k})\||w^{k}-\tilde{x}\||^{2}+\rho_{k}\sum_{\vartheta\in2^{\mathcal{J}}}p_{\vartheta}\||\hat{T}^{(\zeta^{k+1})}w^{k}-\tilde{x}\||^{2}\\
&-\rho_{k}(1-\rho_{k})\sum_{\vartheta\in2^{\mathcal{J}}}p_{\vartheta}\||\hat{T}^{(\zeta^{k+1})}w^{k}-w^{k}\||^{2}\\
&=(1-\rho_{k})\||w^{k}-\tilde{x}\||^{2}+\rho_{k}[\sum_{\vartheta\in2^{\mathcal{J}}}p_{\vartheta}\sum_{j\in\vartheta}q_{j}\|T_{j}w^{k}-\tilde{x}_{j}\|^{2}\\
&+\sum_{\vartheta\in2^{\mathcal{J}}}p_{\vartheta}\sum_{j\neq\vartheta}q_{j}\|w^{k}_{j}-\tilde{x}_{j}\|^{2}]\\
&-\rho_{k}(1-\rho_{k})\sum_{\vartheta\in2^{\mathcal{J}}}p_{\vartheta}\||\hat{T}^{(\zeta^{k+1})}w^{k}-w^{k}\||^{2}\\
&=(1-\rho_{k})\||w^{k}-\tilde{x}\||^{2}+\rho_{k}\||w^{k}-\tilde{x}\||^{2}\\
&+\rho_{k}\sum_{\vartheta\in2^{\mathcal{J}}}p_{\vartheta}\sum_{j\in\vartheta}q_{j}[\|T_{j}w^{k}-\tilde{x}_{j}\|^{2}-\|w^{k}_{j}-\tilde{x}_{j}\|^{2}]\\
&-\rho_{k}(1-\rho_{k})\sum_{\vartheta\in2^{\mathcal{J}}}p_{\vartheta}\||\hat{T}^{(\zeta^{k+1})}w^{k}-w^{k}\||^{2}\\
&=\||w^{k}-\tilde{x}\||^{2}+\rho_{k}\sum_{j=1}^{J}[\|T_{j}w^{k}-\tilde{x}_{j}\|^{2}-\|w^{k}_{j}-\tilde{x}_{j}\|^{2}]\\
&-\rho_{k}(1-\rho_{k})\sum_{\vartheta\in2^{\mathcal{J}}}p_{\vartheta}\||\hat{T}^{(\zeta^{k+1})}w^{k}-w^{k}\||^{2}\\
&=\||w^{k}-\tilde{x}\||^{2}+\rho_{k}[\|Tw^{k}-\tilde{x}\|^{2}-\|w^{k}-\tilde{x}\|^{2}]\\
&-\rho_{k}(1-\rho_{k})\sum_{\vartheta\in2^{\mathcal{J}}}p_{\vartheta}\||\hat{T}^{(\zeta^{k+1})}w^{k}-w^{k}\||^{2}.
\end{align*}
By the the nonexpansiveness of $T$, we have
\begin{align*}
\mathbb{E}[\||x^{k+1}-\tilde{x}\||^{2}|\mathcal
{F}^{k}]&\leq\||w^{k}-\tilde{x}\||^{2}\\
&-\rho_{k}(1-\rho_{k})\sum_{\vartheta\in2^{\mathcal{J}}}p_{\vartheta}\||\hat{T}^{(\zeta^{k+1})}w^{k}-w^{k}\||^{2}.\tag{5.4}\\
\end{align*}
Applying again Lemma 2.3 we have
\begin{align*}
\||w^{k}-\tilde{x}\||^{2}&=\||(1+\alpha_{k})(x^{k}-\tilde{x})-\alpha_{k}(x^{k-1}-\tilde{x})\||^{2}\\
&=(1+\alpha_{k})\||x^{k}-\tilde{x}\||^{2}-\alpha_{k}\||x^{k-1}-\tilde{x}\||^{2}+\alpha_{k}(1+\alpha_{k})\||x^{k}-x^{k-1}\||^{2},\\
\end{align*}
hence by (5.4) we obtain
\begin{align*}
&\mathbb{E}[\||x^{k+1}-\tilde{x}\||^{2}|\mathcal {F}^{k}]-(1+\alpha_{k})\||x^{k}-\tilde{x}\||^{2}+\alpha_{k}\||x^{k-1}-\tilde{x}\||^{2}\\
&\leq-\rho_{k}(1-\rho_{k})\sum_{\vartheta\in2^{\mathcal{J}}}p_{\vartheta}\||\hat{T}^{(\zeta^{k+1})}w^{k}-w^{k}\||^{2}+\alpha_{k}(1+\alpha_{k})\||x^{k}-x^{k-1}\||^{2}.\tag{5.5}\\
\end{align*}
Further, we have
\begin{align*}
\sum_{\vartheta\in2^{\mathcal{J}}}p_{\vartheta}\||\hat{T}^{(\zeta^{k+1})}w^{k}-w^{k}\||^{2}&=\||\frac{1}{\rho_{k}}(x^{k+1}-x^{k})+\frac{\alpha_{k}}{\rho_{k}}(x^{k-1}-x^{k})\||^{2}\\
&\geq\frac{1}{\rho_{k}^{2}}\||x^{k+1}-x^{k}\||^{2}+\frac{\alpha_{k}^{2}}{\rho_{k}^{2}}\||x^{k-1}-x^{k}\||^{2}\\
&+\frac{\alpha_{k}}{\rho_{k}^{2}}(-\lambda_{k}\||x^{k+1}-x^{k}\||^{2}-\frac{1}{\lambda_{k}}\||x^{k-1}-x^{k}\||^{2}),\tag{5.6}
\end{align*}
where we denote
$\lambda_{k}=\frac{1}{\alpha_{k}+\hat{\delta}\rho_{k}}$.

 We derive from (5.5) and (5.6) the inequality (notice that $\rho_{k} \in(0;
1)$)
\begin{align*}
&\mathbb{E}[\||x^{k+1}-\tilde{x}\||^{2}|\mathcal {F}^{k}]-(1+\alpha_{k})\||x^{k}-\tilde{x}\||^{2}+\alpha_{k}\||x^{k-1}-\tilde{x}\||^{2}\\
&\leq\frac{(1-\rho_{k})(\alpha_{k}\lambda_{k}-1)}{\rho_{k}}\||x^{k+1}-x^{k}\||^{2}+\gamma_{k}\||x^{k}-x^{k-1}\||^{2},\tag{5.7}\\
\end{align*}
where
$$\gamma_{k}:=\alpha_{k}(1+\alpha_{k})+\alpha_{k}(1-\rho_{k})\frac{1-\lambda_{k}\alpha_{k}}{\lambda_{k}\rho_{k}}>0.\eqno{(5.8)}$$
Taking again into account the choice of $\lambda_{k}$ we have
$$\hat{\delta}=\frac{1-\lambda_{k}\alpha_{k}}{\lambda_{k}\rho_{k}},$$
and by (5.8) it follows
$$\gamma_{k}:=\alpha_{k}(1+\alpha_{k})+\alpha_{k}(1-\rho_{k})\hat{\delta}\leq \alpha(1+\alpha)+\alpha\hat{\delta}, \forall k\geq1.\eqno{(5.9)}$$

In the following we use some techniques from [14] adapted to our
setting. We define the sequences
$\varphi^{k}:=\||x^{k}-\tilde{x}\||^{2}$ for all $k\in\mathbb{N}$
and
$\varpi^{k}:=\varphi^{k}-\alpha_{k}\varphi^{k-1}+\gamma_{k}\||x^{k}-x^{k-1}\||^{2}$,
for all $k\geq1$. Using the monotonicity of $(\alpha_{k})_{k\geq1}$
and the fact that $\varphi^{k}>0$ for all $k\in\mathbb{N}$, we get
$$\varpi^{k+1}-\varpi^{k}\leq\varphi^{k+1}-(1+\alpha_{k})\varphi^{k}+\alpha_{k}\varphi^{k-1}+\gamma_{k+1}\||x^{k+1}-x^{k}\||^{2}-\gamma_{k}\||x^{k}-x^{k-1}\||^{2},$$
which gives by (5.7)

$$\varpi^{k+1}-\varpi^{k}\leq(\frac{(1-\rho_{k})(\alpha_{k}\lambda_{k}-1)}{\rho_{k}}+\gamma_{k+1})\||x^{k+1}-x^{k}\||^{2},\forall k\geq1.\eqno{(5.10)}$$

We claim that
$$\frac{(1-\rho_{k})(\alpha_{k}\lambda_{k}-1)}{\rho_{k}}+\gamma_{k+1}\leq-\theta,\forall k\geq1.\eqno{(5.11)}$$
Let be $k\geq1$. Indeed, by the choice of $\lambda_{k}$, we get
\begin{align*}
&\frac{(1-\rho_{k})(\alpha_{k}\lambda_{k}-1)}{\rho_{k}}+\gamma_{k+1}\leq-\theta\\
\Longleftrightarrow&\rho_{k}(\gamma_{k+1}+\theta)+(\alpha_{k}\lambda_{k}-1)(1-\rho_{k})\leq0,\\
\Longleftrightarrow&\rho_{k}(\gamma_{k+1}+\theta)+\frac{\hat{\delta}\rho_{k}(1-\rho_{k})}{\alpha_{k}+\hat{\delta}\rho_{k}}\leq0,\\
\Longleftrightarrow&(\alpha_{k}+\hat{\delta}\rho_{k})(\gamma_{k+1}+\theta)+\hat{\delta}\rho_{k}\leq\hat{\delta}.
\end{align*}
Thus, by using (5.9), we have
$$(\alpha_{k}+\hat{\delta}\rho_{k})(\gamma_{k+1}+\theta)+\hat{\delta}\rho_{k}\leq(\alpha_{k}+\hat{\delta}\rho_{k})(\alpha(1+\alpha)+\alpha\hat{\delta}+\theta)+\hat{\delta}\rho_{k}\leq\hat{\delta},$$
where the last inequality follows by taking into account the upper
bound considered for $(\rho_{k})_{k\in\mathbb{N}}$ in (ii). Hence
the claim in (5.11) is true.

We obtain from (5.10) and (5.11) that

$$\varpi^{k+1}-\varpi^{k}\leq-\theta\||x^{k+1}-x^{k}\||^{2},\forall k\geq1.\eqno{(5.12)}$$
The sequence $(\varpi_{k})_{k\geq1}$ is nonincreasing and the bound
for $(\alpha_{k})_{k\geq1}$  delivers
$$-\alpha\varphi^{k-1}\leq\varphi^{k}-\alpha\varphi^{k-1}\leq\varpi^{k}\leq\varpi^{1},\forall k\geq1.\eqno{(5.13)}$$
We obtain
$$\varphi^{k}\leq\alpha^{k}\varphi^{0}+\varpi^{1}\sum_{n=0}^{k-1}\alpha^{n}\leq\alpha^{k}\varphi^{0}+\frac{\varpi^{1}}{1-\alpha},\forall k\geq1,$$
where we notice that $\varpi^{1} = \varphi^{1}\geq0$ (due to the
relation $\alpha_{1} = 0$). Combining (5.12) and (5.13), we get for
all $k\geq1$
$$\theta\sum_{n=1}^{k}\||x^{n+1}-x^{n}\||^{2}\leq\varpi^{1}-\varpi^{k+1}\leq\varpi^{1}+\alpha\varphi^{k}\leq\alpha^{k}\varphi^{0}+\frac{\varpi^{1}}{1-\alpha},$$
which shows that
$\sum_{k\in\mathbb{N}}\||x^{k+1}-x^{k}\||^{2}<+\infty$ with respect
to the filtration $(\mathcal {F}^{k})$. By
$w^{k}=x^{k}+\alpha_{k}(x^{k}-x^{k-1})$, (5.9) and Lemma 2.4 we
derive that $\||x^{k}-\tilde{x}\||$ converges with probability one
towards a random variable that is finite almost everywhere.

Given a countable dense subset $\mathrm{Z}$ of $Fix(T)$, there is a
probability one set on which $\||x^{k}-x\||\rightarrow
\mathrm{X}_{x}\in[0,\infty)$ for all $x\in\mathrm{Z}$. Let $x\in
Fix(T)$, let $\varepsilon > 0$, and choose $x\in\mathrm{Z}$ such
that $\||\tilde{x}-x\||\leq\varepsilon$. With probability one, we
have
$$\||x^{k}-\tilde{x}\||\leq\||x^{k}-x\||+\||\tilde{x}-x\||\leq\mathrm{X}_{x}+2\varepsilon,$$
for $k$ large enough. Similarly
$\||x^{k}-\tilde{x}\||\geq\mathrm{X}_{x}-2\varepsilon$, for $k$
large enough. Therefor, we have

$\mathbf{A}_{1}$: There is a probability one set on which
$\||x^{k}-\tilde{x}\||$ converges for every $\tilde{x}\in Fix(T)$.\\
By the definition of $w^{k}$ and the upper bound requested for
 $(\alpha_{k})_{k\geq1}$, we get  there is a probability one set on which
$\||w^{k}-\tilde{x}\||$ converges for every $\tilde{x}\in Fix(T)$.
On the other hand, with the same proof of Theorem 3 of [2], we know
that
\begin{align*}
\mathbb{E}[\||x^{k+1}-\tilde{x}\||^{2}|\mathcal {F}^{k}]&\leq\||w^{k}-\tilde{x}\||^{2}\\
&-\rho_{k}(1-\rho_{k})\|(I-T)w^{k}\|^{2}.\tag{5.14}\\
\end{align*}
From the assumption on $\rho_{k}$, we know that
\begin{align*}
\rho(1-\frac{\hat{\delta}-\alpha[\alpha(1+\alpha)+\alpha\hat{\delta}+\theta]}{\hat{\delta}[1+\alpha(1+\alpha)+\alpha\hat{\delta}+\theta]})\|(I-T)w^{k}\|^{2}&\leq\rho_{k}(1-\rho_{k})\|(I-T)w^{k}\|^{2}\\
&\leq\||w^{k}-\tilde{x}\||^{2}-\mathbb{E}[\||x^{k+1}-\tilde{x}\||^{2}|\mathcal
{F}^{k}]\\
&\leq\alpha_{k}^{2}\||x^{k}-x^{k-1}\||^{2}\\
&-\mathbb{E}[\||x^{k+1}-\tilde{x}\||^{2}|\mathcal
{F}^{k}]\\
&\leq\|x^{k}-x^{k-1}\||^{2}-\mathbb{E}[\||x^{k+1}-\tilde{x}\||^{2}|\mathcal
{F}^{k}].\tag{5.15}
\end{align*}
Taking the expectations on both sides of inequality (5.15) and
iterating over $k$, we obtain
$$\mathbb{E}\|(I-T^{k})w^{k}\|^{2}\leq\frac{1}{\rho(1-\frac{\hat{\delta}-\alpha[\alpha(1+\alpha)+\alpha\hat{\delta}+\theta]}{\hat{\delta}[1+\alpha(1+\alpha)+\alpha\hat{\delta}+\theta]})}(x^{0}-\tilde{x})^{2}.$$
By Markov＊s inequality and Borel Cantelli＊s lemma,we therefore
obtain:

$\mathbf{A}_{2}$: $(I-T)w^{k}\rightarrow0$ almost surely.\\
We now consider an elementary event in the probability one set where
$\mathbf{A}_{1}$ and $\mathbf{A}_{2}$ hold. On this event, since the
sequence $\||w^{k}-\tilde{x}\||$ converges for $\tilde{x}\in
Fix(T)$. the sequence $(w^{k})_{k\in\mathbb{N}}$ is bounded. Since
$T$ is nonexpansive, it is continuous, and $\mathbf{A}_{2}$ shows
that all the accumulation points of  $(w^{k})_{k\in\mathbb{N}}$ are
in $Fix(T)$. It remains to show that these accumulation points
reduce to one point. Assume that $x^{\ast}$ is an accumulation
point. By $\mathbf{A}_{1}$, $\||w^{k}-x^{\ast}\||$ converges.
Therefore, $\lim\||w^{k}-x^{\ast}\||=\liminf\||w^{k}-x^{\ast}\||=0$,
which shows that $x^{\ast}$ is unique.

\end{proof}

\section{Application to stochastic approximation}
\subsection{Problem setting}
Given an integer $N > 1$, consider the problem of minimizing a sum
of composite functions

$$\inf_{x\in\mathcal{X}}\sum_{n=1}^{N}(f_{n}(x)+g_{n}(x)),\eqno{(6.1)}$$
where we make the following assumption:

\begin{ass}
For each $n = 1, ...,N$,\\
 (1) $f_{n}$ is a convex differentiable function on $\mathcal{X}$, and its
gradient $\nabla f_{n}$  be co-coercive w.r.t. a bound, linear,
symmetric and positive linear
maps $\hat{E}^{-1}$;\\
 (2) $g_{n}\in \Gamma_{0}(\mathcal{X})$;\\
 (3) The infimum of Problem (6.1) is attained;\\
 (4) $\cap_{n=1}^{N}ridom g_{n}\neq0.$
\end{ass}

This problem arises for instance in large-scale learning
applications where the learning set is too large to be handled as a
single block. Stochastic minibatch approaches consist in splitting
the data set into $N$ chunks and to process each chunk in some
order, one at a time. The quantity $f_{n}(x) + g_{n}(x)$ measures
the inadequacy between the model (represented by parameter $x$) and
the $n$-th chunk of data. Typically, $f_{n}$ stands for a data
fitting term whereas $g_{n}$ is a regularization term which
penalizes the occurrence of erratic solutions. As an example, the
case where $f_{n}$ is quadratic and $g_{n}$ is the $l_{1}$-norm
reduces to the popular LASSO problem [13]. In particular,  it also
useful to recover sparse signal.

\subsection{ Instantiating the PADMM$^{+}$}
We regard our stochastic minibatch algorithm as an instance of the
PADMM$^{+}$ coupled with a randomized coordinate descent. In order
to end that ,we rephrase Problem (6.1) as
$$\inf_{x\in\mathcal{X}^{N}}\sum_{n=1}^{N}(f_{n}(x)+g_{n}(x))+\iota_{\mathcal{C}}(x),\eqno{(6.2)}$$
where the notation $x_{n}$ represents the $n$-th component of any $x
\in \mathcal{X}^{N}$, $\mathcal{C}$ is the space of vectors $x \in
\mathcal{X}^{N}$ such that $x_{1} = \cdots = x_{N}$. On the space
$\mathcal{X}^{N}$, we set $f(x) = \sum_{ n} f_{n}(x_{n})$, $g(x) =
\sum_{ n} g_{n}(x_{n})$, $h(x) = \iota_{\mathcal{C}}$ and $D =
I_{\mathcal{X}^{N}}$ the identity matrix. Problem (6.2) is
equivalent to
$$\min_{x\in\mathcal{X}^{N}} f(x)+g(x)+ (h\circ D)(x).\eqno{(6.3)}$$
We define the natural scalar product on $\mathcal{X}^{N}$ as
$\langle x,y\rangle=\sum_{n=1}^{N}\langle x_{n},y_{n}\rangle$.
Applying the PADMM$^{+}$ to solve Problem (6.3) leads to the
following iterative scheme:
\begin{align*}
&\eta^{k}=y^{k}+\alpha_{k}(y^{k}-y^{k-1}),\\
&\xi^{k}=x^{k}+\alpha_{k}(x^{k}-x^{k-1}),\\
&z^{k+1}=proj_{\mathcal{C}}(\xi^{k}+\Psi \eta^{k}), \\
&y^{k+1}_{n}=\eta^{k}_{n}+\Psi^{-1}(\xi^{k}_{n}-z^{k+1}_{n} ),\\
&u^{k+1}_{n}=(I-\tilde{T}\Psi^{-1})\xi^{k}_{n}+\tilde{T}\Psi^{-1}z^{k+1}_{n},\\
&x^{k+1}_{n}=\arg\min_{w\in\mathcal {X}}[g_{n}(w)+\langle\nabla
f_{n}(\xi^{k}),w\rangle+\frac{\|w-u^{k+1}_{n}-\tilde{T}y^{k+1}_{n}\|_{\tilde{T}^{-1}}^{2}}{2}],
\end{align*}
where $T$ and $\Psi$ are two diagonal matrices which have same
dimensional, $proj_{\mathcal{C}}$ is the orthogonal projection onto
$\mathcal{C}$. Observe that for any $x \in \mathcal{X}^{N}$,
$proj_{\mathcal{C}}(x)$ is equivalent to $(\bar{x}, \cdots ,
\bar{x})$ where $\bar{x}$ is the average of vector $x$, that is
$\bar{x}=N^{-1}\sum_{n}x_{n}$. Consequently, the components of
$z^{k+1}$ are equal and coincide with $\bar{\xi}^{k}+\Psi
\bar{\eta}^{k}$ where $\bar{\xi}^{k}$ and $\bar{\eta}^{k}$ are the
averages of $\xi^{k}$ and $\eta^{k}$ respectively. By inspecting the
$y^{k}$ $n$-update equation above, we notice that the latter
equality simplifies even further by noting that $\bar{y}^{k+1} = 0$
or, equivalently, $\bar{\eta}^{k} = 0$ for all $k\geq 1$ if the
algorithm is started with $\bar{y}^{0} = 0$. Finally, for any $n$
and $k\geq 1$, the above iterations reduce to
\begin{align*}
&\eta^{k}=y^{k}+\alpha_{k}(y^{k}-y^{k-1}),\\
&\xi^{k}=x^{k}+\alpha_{k}(x^{k}-x^{k-1}),\\
&\bar{\xi}^{k}=\frac{1}{N}\sum_{n=1}^{N} \xi^{k}_{n},\\
&y^{k+1}_{n}=\eta^{k}_{n}+\Psi^{-1}(\xi^{k}_{n}-\bar{\xi}^{k} ),\\
&u^{k+1}_{n}=(I-\tilde{T}\Psi^{-1})\xi^{k}_{n}+\tilde{T}\Psi^{-1}\bar{\xi}^{k},\\
&x^{k+1}_{n}=prox_{\tilde{T}g_{n}}[u^{k+1}_{n}-\tilde{T}(\nabla
f_{n}(\xi^{k}_{n})+y^{k+1}_{n})].
\end{align*}
These iterations can be written more compactly as
\begin{algorithm}[H]
\caption{Minibatch PADMM$^{+}$.}
\begin{algorithmic}\label{1}
\STATE Initialization: Choose $x^{0}, x^{1}\in \mathcal{X}$, $y^{0},
y^{1}\in
\mathcal{Y}$, s.t. $\sum_{n}y^{0}_{n}=0$.\\
Do
$$
\begin{array}{l}
\bullet ~~\eta^{k}=y^{k}+\alpha_{k}(y^{k}-y^{k-1}),\\
~~~~\xi^{k}=x^{k}+\alpha_{k}(x^{k}-x^{k-1}),\\
~~~~\bar{\xi}^{k}=\frac{1}{N}\sum_{n=1}^{N} \xi^{k}_{n},\\
\bullet ~~For~ batches~  n = 1, \cdots ,N,~  do\\
~~~~y^{k+1}_{n}=\eta^{k}_{n}+\Psi^{-1}(\xi^{k}_{n}-\bar{\xi}^{k} ),\\
~~~~x^{k+1}_{n}=prox_{\tilde{T}g_{n}}[(I-2\tilde{T}\Psi^{-1})\xi^{k}_{n}-\tilde{T}\nabla
f_{n}(\xi^{k}_{n})+2\tilde{T}\Psi^{-1}\bar{\xi}^{k}-\tilde{T}\eta^{k}_{n}].\\
\bullet ~~Increment~ k.
\end{array}\eqno{(6.4)}
$$

\end{algorithmic}
\end{algorithm}
The following result is a straightforward consequence of Theorem
4.2.
\begin{thm}
Assume that the minimization Problem (6.3) is consistent,
 $T$ and $\Psi$ are two diagonal matrices which have same
dimensional.  Let Assumption 6.1  hold true and
 $\tilde{T}^{-1}-\frac{1}{2}\hat{E}>0$,
$\|\tilde{T}^{-1}\|-\|\Psi^{-1}\|>\frac{\|\hat{E}\|}{2}$. Let the
sequences $(\bar{x}^{k},y^{k})$ be generated by Minibatch
PADMM$^{+}$. Then for any initial point $( x^{0}, y^{0}), ( x^{1},
y^{1})$ such that $\bar{y}^{0} = 0$, the sequence $\{\bar{x}^{k}\}$
converges to a solution of Problem (6.3).
\end{thm}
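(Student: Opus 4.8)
\medskip
\noindent\textbf{Sketch of the intended argument.}

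The plan is to exhibit Problem (6.3) as a concrete instance of the composite Problem (1.1) and to deduce the claim directly from Theorem 4.2, which governs the convergence of PADMM$^{+}$. Following the reformulation described before Algorithm 6, I would work on the product space $\mathcal{X}^{N}$ equipped with $\langle x,y\rangle=\sum_{n=1}^{N}\langle x_{n},y_{n}\rangle$ and take $f(x)=\sum_{n}f_{n}(x_{n})$, $g(x)=\sum_{n}g_{n}(x_{n})$, $h=\iota_{\mathcal{C}}$ and $D=I_{\mathcal{X}^{N}}$, where $\mathcal{C}=\{x\in\mathcal{X}^{N}:x_{1}=\cdots=x_{N}\}$. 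With these identifications Problem (6.3) is precisely Problem (1.1), so the whole task reduces to (a) checking that the data meet the hypotheses of Theorem 4.2, (b) confirming that the compact recursion (6.4) is nothing but PADMM$^{+}$ for this data, and (c) transferring the conclusion to the average $\bar{x}^{k}$.

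For step (a) I would verify the hypotheses one by one. Consistency of (6.3) together with Assumption 6.1(3) gives that the infimum is attained, and Assumption 6.1(4), $\cap_{n}\mathrm{ri}\,\mathrm{dom}\,g_{n}\neq\emptyset$, yields the qualification condition $0\in\mathrm{ri}(\mathrm{dom}\,h-D\,\mathrm{dom}\,g)$ of Assumption 2.1, using $\mathrm{dom}\,h=\mathcal{C}$ and $D=I$. Since $D=I_{\mathcal{X}^{N}}$ is injective and $f\circ D^{-1}=f$, Assumption 3.1 reduces to cocoercivity of $\nabla f$; because $f$ is block separable, $\nabla f(x)=(\nabla f_{1}(x_{1}),\dots,\nabla f_{N}(x_{N}))$, and the per-block cocoercivity of each $\nabla f_{n}$ supplied by Assumption 6.1(1) assembles into cocoercivity of $\nabla f$ w.r.t. the block-diagonal map built from $\hat{E}$, which is exactly the object $\bar{E}$ appearing in Theorem 4.2. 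Finally, because $D=I$, the two preconditioner requirements of Theorem 4.2 collapse to $\tilde{T}^{-1}-\tfrac{1}{2}\hat{E}>0$ and $\|\tilde{T}^{-1}\|-\|\Psi^{-1}\|>\tfrac{\|\hat{E}\|}{2}$, i.e. exactly the two inequalities imposed in the statement, while the conditions on $(\alpha_{k})$, $(\rho_{k})$, $\theta$ and $\hat{\delta}$ are inherited verbatim.

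Step (b) is where the real care is needed and is, I expect, the main obstacle. Two reductions must be made rigorous. First, since $h=\iota_{\mathcal{C}}$ and $D=I$, the subproblem (4.1a) is the orthogonal projection onto $\mathcal{C}$, which acts by averaging, so that every component of $z^{k+1}$ equals $\bar{\xi}^{k}+\Psi\bar{\eta}^{k}$. Second, one must establish the invariance $\bar{y}^{k}=0$, hence $\bar{\eta}^{k}=0$, for all the relevant $k$: summing the dual update $y^{k+1}_{n}=\eta^{k}_{n}+\Psi^{-1}(\xi^{k}_{n}-z^{k+1}_{n})$ over $n$ and inserting $\bar{z}^{k+1}=\bar{\xi}^{k}+\Psi\bar{\eta}^{k}$ makes the right-hand side telescope to $0$, so that, starting from the initialization $\bar{y}^{0}=0$, the zero-mean property of the dual variable propagates by induction. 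Once $\bar{\eta}^{k}=0$ is in force, the $z$-, $y$- and $u$-updates collapse to the explicit formulas displayed before Algorithm 6 and the $x$-subproblem becomes the proximal step $prox_{\tilde{T}g_{n}}$ in (6.4); this identifies (6.4) with the PADMM$^{+}$ recursion for the reformulated data.

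For step (c), Theorem 4.2 now guarantees that the full iterate $x^{k}=(x^{k}_{1},\dots,x^{k}_{N})$ converges to a minimizer $x^{\star}$ of (6.3). Because $h=\iota_{\mathcal{C}}$ forces $x^{\star}\in\mathcal{C}$, its components share a common value $\bar{x}^{\star}$, and $(\bar{x}^{\star},\dots,\bar{x}^{\star})$ solving (6.3) is equivalent to $\bar{x}^{\star}$ solving (6.1). Hence $\bar{x}^{k}=N^{-1}\sum_{n}x^{k}_{n}\to N^{-1}\sum_{n}x^{\star}_{n}=\bar{x}^{\star}$, which is the asserted convergence of $\{\bar{x}^{k}\}$.
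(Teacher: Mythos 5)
Your proposal is correct and follows essentially the same route as the paper: instantiate PADMM$^{+}$ on the product-space reformulation $(f,g,h=\iota_{\mathcal{C}},D=I_{\mathcal{X}^{N}})$, use the projection-as-averaging identity and the invariance $\bar{y}^{k}=0$ (hence $\bar{\eta}^{k}=0$) to identify the recursion with (6.4), and invoke Theorem 4.2. In fact you supply more detail than the paper, which performs this reduction in Section 6.2 and then simply declares the theorem ``a straightforward consequence of Theorem 4.2''; your explicit verification of the qualification condition from Assumption 6.1(4) and the telescoping induction for $\bar{y}^{k}=0$ fill in exactly the steps the paper leaves implicit.
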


At each step $k$, the iterations given above involve the whole set
of functions $f_{n}, g_{n} (n = 1, \cdots,N)$. Our aim is now to
propose an algorithm which involves a single couple of functions
$(f_{n}, g_{n})$ per iteration.
\subsection{ A stochastic minibatch primal-dual splitting  algorithm with  preconditioning} We are now in position to state the main algorithm of this
section. The proposed preconditioned stochastic minibatch
primal-dual splitting algorithm (PSMPDS) is obtained upon applying
the randomized coordinate descent on the minibatch PADMM$^{+}$:

\begin{algorithm}[H]
\caption{PSMPDS.}
\begin{algorithmic}\label{1}
\STATE Initialization: Choose $x^{0}, x^{1}\in \mathcal{X}$, $y^{0},
y^{1}\in
\mathcal{Y}$.\\
Do
$$
\begin{array}{l}
\bullet ~~Define~\eta^{k}=y^{k}+\alpha_{k}(y^{k}-y^{k-1}),\\
~~~~~~~~~~~~~~~\xi^{k}=x^{k}+\alpha_{k}(x^{k}-x^{k-1}),\\
~~~~~~~~~~~~~~~ \bar{\xi}^{k}=\frac{1}{N}\sum_{n=1}^{N} \xi^{k}_{n}, ~ \bar{\eta}^{k}=\frac{1}{N}\sum_{n=1}^{N}\eta^{k}_{n},\\
\bullet ~~Pick ~up ~the ~value ~of  ~\zeta^{k+1}, \\
\bullet ~~For~ batch~n=\zeta^{k+1},~set\\
~~~~y^{k+1}_{n}=\eta^{k}_{n}-\bar{\eta}^{k}+\Psi^{-1}(\xi^{k}_{n}-\bar{\xi}^{k} ),~~~~~~~~~~~~~~~~~~~~~~~~~~~~~~~~~~~~~~~~~~~~~~~~~~(6.5a)\\
~~~~x^{k+1}_{n}=prox_{\tilde{T}g_{n}}[(I-2\tilde{T}\Psi^{-1})\xi^{k}_{n}-\tilde{T}\nabla
f_{n}(\xi^{k}_{n})-\tilde{T}\eta^{k}_{n}+2\tilde{T}(\Psi^{-1}\bar{\xi}^{k}+\bar{\eta}^{k})].(6.5b)\\
\bullet ~~For~ all ~batches ~n\neq\zeta^{k+1},~~ y^{k+1}_{n}=\eta^{k}_{n}, x^{k+1}_{n}=\xi^{k}_{n}.\\
 \bullet ~~Increment~ k.
\end{array}
$$

\end{algorithmic}
\end{algorithm}

\begin{ass}
The random sequence $(\zeta^{k})_{k\in\mathbb{N}^{\ast}}$ is i.i.d.
and satisfies $\mathbb{P}[\zeta^{1} = n] > 0$ for all $n = 1,
...,N$.
\end{ass}

\begin{thm}
Assume that the minimization Problem (6.3) is consistent, $T$ and
$\Psi$ are two diagonal matrices which have same dimensional.  Let
Assumption 6.1 and Assumption 6.2 hold true and
$\tilde{T}^{-1}-\frac{1}{2}\hat{E}>0$,
$\|\tilde{T}^{-1}\|-\|\Psi^{-1}\|>\frac{\|\hat{E}\|}{2}$.
 Then for any initial point $( x^{0}, y^{0}), ( x^{1},
y^{1})$ , the sequence $\{\bar{x}^{k}\}$  generated by PSMPDS
algorithm converges to a solution of Problem (6.3).
\end{thm}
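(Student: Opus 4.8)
The plan is to exhibit PSMPDS as a concrete instance of the inertial randomized Krasnosel'skii--Mann iteration of Theorem 5.1, run on the nonexpansive operator that drives the deterministic Minibatch PADMM$^{+}$. First I would recall, via Theorem 4.2 and the consensus specialization leading to Theorem 6.1, that with $D = I_{\mathcal{X}^{N}}$ the synchronous Minibatch PADMM$^{+}$ map has the form $z^{k+1} = T z^{k}$ for $z = (x,y) \in \mathcal{X}^{N}\times\mathcal{Y}^{N}$, where $T = T_{1}\circ T_{2}$ is averaged (hence nonexpansive) in the preconditioned inner product induced by $\bar{P}$. The stepsize hypotheses $\tilde{T}^{-1}-\tfrac{1}{2}\hat{E}>0$ and $\|\tilde{T}^{-1}\|-\|\Psi^{-1}\|>\tfrac{\|\hat{E}\|}{2}$ are exactly conditions (i)--(ii) of Theorem 4.2 with $\bar{E}=\hat{E}$, so by Lemma 4.1 the map $\bar{P}-\tfrac{1}{2}\bar{C}$ is positive definite and $T$ is well defined and nonexpansive; consistency of Problem (6.3) together with Assumption 6.1 guarantees $\mathrm{Fix}(T)\neq\emptyset$ via the usual identification of fixed points with primal--dual solutions established in Lemma 3.3.

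Second, I would set up the block decomposition $\mathcal{Z}=\mathcal{Z}_{1}\times\cdots\times\mathcal{Z}_{N}$, with $\mathcal{Z}_{n}$ carrying the pair $(x_{n},y_{n})$, and verify that the PSMPDS step is precisely the recursion (5.3), namely $z^{k+1}=w^{k}+\rho_{k}\bigl(\hat{T}^{(\zeta^{k+1})}w^{k}-w^{k}\bigr)$ with $w^{k}=z^{k}+\alpha_{k}(z^{k}-z^{k-1})$. Concretely, for $n=\zeta^{k+1}$ the updates (6.5a)--(6.5b) must agree with the $n$-th coordinate $T_{n}w^{k}$ of the synchronous map, while for $n\neq\zeta^{k+1}$ the freezing rule $y^{k+1}_{n}=\eta^{k}_{n}$, $x^{k+1}_{n}=\xi^{k}_{n}$ is exactly $\hat{T}^{(\zeta^{k+1})}_{n}w^{k}=w^{k}_{n}$. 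The correction terms $-\bar{\eta}^{k}$ in (6.5a) and $+2\tilde{T}\bar{\eta}^{k}$ in (6.5b) are what preserve this identification once the consensus simplification $\bar{y}=0$ can no longer be maintained under a single-block update.

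Third, I would check the hypotheses of Theorem 5.1: by Assumption 6.2 the sequence $(\zeta^{k})$ is i.i.d.\ with $\mathbb{P}[\zeta^{1}=n]>0$ for every $n$, which is Condition (5.1) for this block partition; the inertial parameters inherit the monotonicity $\alpha_{1}=0$, $0\le\alpha_{k}\le\alpha<1$ and the admissible range for $\rho_{k},\theta,\hat{\delta}$ from the setting of Theorem 6.1. Theorem 5.1 then yields that, almost surely, $z^{k}\to z^{\star}\in\mathrm{Fix}(T)$. Translating back through the fixed-point/primal--dual correspondence, and using that at any fixed point the indicator $\iota_{\mathcal{C}}$ forces all blocks $x^{\star}_{n}$ to coincide, I conclude that $\bar{x}^{k}$ converges almost surely to a solution of Problem (6.3).

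The main obstacle is the coordinate-correspondence of the second step: one must verify in detail that the single-batch updates (6.5a)--(6.5b), \emph{including} the global averages $\bar{\xi}^{k},\bar{\eta}^{k}$ that couple all blocks, reproduce exactly $\hat{T}^{(\zeta^{k+1})}w^{k}$ for the \emph{same} nonexpansive $T$ underlying the synchronous algorithm. This is delicate because the averaging operations are not block-local, so one has to confirm algebraically that applying $T$ coordinate-wise and then freezing the unselected blocks is identical to the stated PSMPDS recursion; the terms $-\bar{\eta}^{k}$ and $+2\tilde{T}\bar{\eta}^{k}$ are precisely the device that restores this identity in the regime where $\bar{\eta}^{k}\neq 0$ is possible.
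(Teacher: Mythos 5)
Your proposal is correct and takes essentially the same route as the paper's own proof: both identify the synchronous Minibatch PADMM$^{+}$ map as an averaged (hence nonexpansive) operator $T$ via Lemma 3.3 and the preconditioned setting of Theorem 4.2, decompose $\mathcal{Z}$ into the per-batch blocks $(x_{n},y_{n})$, verify that the PSMPDS updates (6.5a)--(6.5b) together with the freezing rule realize $\hat{T}^{(\zeta^{k+1})}$ applied to the inertial extrapolate $w^{k}$, and then invoke Theorem 5.1 under Assumption 6.2 (which gives Condition (5.1)). Your flagged ``main obstacle''---that the non-block-local averages $\bar{\xi}^{k},\bar{\eta}^{k}$ and the correction terms $-\bar{\eta}^{k}$, $+2\tilde{T}\bar{\eta}^{k}$ must be shown to reproduce the coordinates of the \emph{same} operator $T$ once $\bar{y}^{k}\neq 0$---is exactly the verification the paper carries out at the end of its proof using $h^{\ast}=\iota_{C^{\perp}}$ and Eqs.\ (3.10a)--(3.10b).
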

\begin{proof}
Let us define $(\bar{f}, \bar{g}, h, D)=(f, g, h, I_{x^{N}})$ where
the functions $f$, $g$, and $h$ are the ones defined in section 6.2.
If we replace  $T, \Psi$ by $\mu, \tau$, then the iterates
$((y^{k+1}_{ n} )^{N} _{n=1}, (x^{k+1}_{ n} )^{N} _{n=1})$ described
by Equations (6.4) coincide with the iterates $( y^{k+1}, x^{k+1})$
described by Equations (3.10). If we write these equations more
compactly as $(y^{k+1}, x^{k+1}) = T(\xi^{k}, \eta^{k})$ where
$(\xi^{k}, \eta^{k})=(x^{k}, y^{k})+\alpha_{k}[(x^{k},
y^{k})-(x^{k-1}, y^{k-1})]$, and the operator $T$ acts in the space
$\mathcal{Z}=\mathcal{X}^{N}\times\mathcal{X}^{N}$, then from the
proof of Lemma 3.1, we konw that $T$ is $\tilde{a}$-averaged, where
$\tilde{a}=(2-a_{1})^{-1}$ and
$a_{1}=\frac{\|\hat{E}\|}{2}(\|\tilde{T}^{-1}\|-\|\Psi^{-1}\|)^{-1}$
.
 Defining the selection operator $\mathcal{S}_{n }$ on
$\mathcal{Z}$ as $\mathcal{S}_{n }(y, x) = (y_{n}, x_{n})$, we
obtain that $\mathcal{Z} = \mathcal{S}_{1 }(\mathcal{Z})\times
\cdots\times \mathcal{S}_{N }(\mathcal{Z})$ up to an element
reordering. To be compatible with the notations of Section 5.1, we
assume that $J = N$ and that the random sequence $\zeta^{k}$ driving
the PSMPDS algorithm is set valued in
$\{\{1\},\ldots\{N\}\}\subset2^{\mathcal{J}}$. In order to establish
Theorem 6.2, we need to show that the iterates $(y^{k+1}, x^{k+1})$
provided by the PSMPDS algorithm are those who satisfy the equation
$(y^{k+1}, x^{k+1}) = T^{(\zeta^{k+1})}(y^{k}, x^{k})$.
 By the direct application of Theorem 5.1, we can obtain Theorem
 6.2.\\
Let us start with the $y$-update equation. Since $h = \iota_{C}$,
its Legendre-Fenchel transform is $h^{\ast} = \iota_{C^{\perp}}$
where $C^{\perp}$ is the orthogonal complement of $C$ in $\mathcal
{X}^{N}$. Consequently, if we write $(\varsigma^{k+1},
\upsilon^{k+1}) = T(y^{k}, x^{k})$, and replace  $\mu, \tau$  by $T,
\Psi$ then by Eq. (3.10a),

$$\varsigma^{k+1}_{n}=\eta^{k}_{n}-\bar{\eta}^{k}+\Psi^{-1}(\xi^{k}_{n}-\bar{\xi}^{k} )~~n=1,\ldots N.$$
Observe that in general, $\bar{y}^{k}\neq 0$ because in the PSMPDS
algorithm, only one component is updated at a time. If $\{n\} =
\zeta^{k+1}$, then $y^{k+1}_{n} = \varsigma^{k+1}_{n}$ which is Eq.
(6.5a). All other components of $y^{k}$ are carried over to $
y^{k+1}$ .\\
 By Equation (3.10b) we also get
$$\upsilon^{k+1}_{n}=prox_{\tilde{T}g_{n}}[\xi^{k}_{n}-\tilde{T}\nabla
f_{n}(\xi^{k}_{n})-\tilde{T}(2y^{k+1}_{n}-\eta^{k})].$$ If $\{n\} =
\zeta^{k+1}$, then $x^{k+1}_{n}=\upsilon^{k+1}_{n}$ can easily be
shown to be given by (6.5b).

\end{proof}

\section{Distributed optimization}

~~~~Consider a set of $N > 1$ computing agents that cooperate to
solve the minimization Problem (6.1). Here, $f_{n}$, $g_{n}$ are two
private functions available at Agent $n$. Our purpose is to
introduce a random distributed algorithm to solve (6.1). The
algorithm is asynchronous in the sense that some components of the
network are allowed to wake up at random and perform local updates,
while the rest of the network stands still. No coordinator or global
clock is needed. The frequency of activation of the various network
components is likely to vary.

The examples of this problem appear in learning applications where
massive training data sets are distributed over a network and
processed by distinct machines [17], [18], in resource allocation
problems for communication networks [19], or in statistical
estimation problems by sensor networks [20], [21].
\subsection{Network model and problem formulation}
We consider the network as a graph $G = (Q,E)$ where $Q = \{1,
\cdots ,N\}$ is the set of agents/nodes and $E\subset \{1, \cdots
,N\}^{2}$ is the set of undirected edges. We write $n\sim m$
whenever ${n,m}\in E$. Practically, $n\sim m $ means that agents $n$
and $m$ can communicate with each other.

\begin{ass}
$G$ is connected and has no self loop.
\end{ass}

Now we introduce some notations. For any $x\in\mathcal{X}^{|Q|}$, we
denote by $x_{n}$ the components of $x$, i.e., $x = (x_{n})_{n\in
Q}$. We regard the functions $f$ and $g$ on
$\mathcal{X}^{|Q|}\rightarrow(-\infty,+\infty]$ as $f(x)=\sum_{n\in
Q}f_{n}(x_{n})$ and $g(x)=\sum_{n\in Q}g_{n}(x_{n})$. So the Problem
(6.1) is equal to the minimization of $f(x)+g(x)$ under the
constraint that all components of $x$ are equal.

Next we write the latter constraint in a way that involves the graph
$G$. We replace the global consensus constraint by a modified
version of the function $\iota_{\mathcal{C}}$ . The purpose of us is
to ensure global consensus through local consensus over every edge
of the graph.

For any $\epsilon\in E$, say $\epsilon= \{n,m\}\in Q$ , we define
the linear operator $D_{\epsilon}(x) : \mathcal{X}^{|Q|} \rightarrow
\mathcal{X}^{2}$ as $D_{\epsilon}(x) = (x_{n}, x_{m})$ where we
assume some ordering on the nodes to avoid any ambiguity on the
definition of $D$. We construct the linear operator
$D:\mathcal{X}^{|Q|} \rightarrow\mathcal{Y}\triangleq
\mathcal{X}^{2|Q|}$ as $D(x)=(D_{\epsilon}(x))_{\epsilon\in E}$
where we also assume some ordering on the edges. Any vector $y \in
\mathcal{Y}$ will be written as $y = (y_{\epsilon})_{\epsilon\in E}$
where, writing $\epsilon= \{n,m\} \in E$, the component
$y_{\epsilon}$ will be represented by the couple $y_{\epsilon}=
(y_{\epsilon}(n), y_{\epsilon}(m))$ with $n < m$. We also introduce
the subspace of
 $\mathcal{X}^{2}$ defined as $\mathcal{C}_{2} = \{(x, x) : x \in \mathcal{X}\}$. Finally, we define $h : \mathcal{Y} \rightarrow (
-\infty,+\infty]$ as

$$h(y)=\sum_{\epsilon\in E}\iota_{\mathcal{C}_{2}}(y_{\epsilon}).\eqno{(7.1)}$$
Then we consider the following problem:
$$\min_{x\in\mathcal{X}^{|Q|}} f(x)+g(x)+ (h\circ D)(x).\eqno{(7.2)}$$

\begin{lem}
([2]). Let Assumptions 7.1 hold true.  The minimizers of (7.2) are
the tuples $(x^{\ast}, \cdots , x^{\ast})$ where $x^{\ast}$ is any
minimizer of (6.1).
\end{lem}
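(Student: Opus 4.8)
The plan is to reduce Problem (7.2) to Problem (6.1) by showing that the composite term $h\circ D$ is nothing but the indicator of the global consensus subspace, and then to exploit the natural bijection between consensus tuples and single points of $\mathcal{X}$. The work splits into three pieces: unwinding the definition of $h\circ D$, a graph-connectivity argument, and a final matching of objective values.

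First I would unwind $h\circ D$. By the definitions of $D$ and of $h$ in (7.1), for any $x=(x_{n})_{n\in Q}\in\mathcal{X}^{|Q|}$ one has
$$(h\circ D)(x)=\sum_{\epsilon=\{n,m\}\in E}\iota_{\mathcal{C}_{2}}(x_{n},x_{m}),$$
which equals $0$ precisely when $x_{n}=x_{m}$ for every edge $\{n,m\}\in E$, and $+\infty$ otherwise. Hence $h\circ D=\iota_{\mathcal{L}}$, where $\mathcal{L}:=\{x\in\mathcal{X}^{|Q|}:x_{n}=x_{m}\ \text{for all}\ \{n,m\}\in E\}$ is the local-consensus set attached to the edge structure of $G$.

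The crux of the lemma is to identify $\mathcal{L}$ with the global-consensus subspace $\mathcal{C}:=\{x\in\mathcal{X}^{|Q|}:x_{1}=\cdots=x_{N}\}$. The inclusion $\mathcal{C}\subseteq\mathcal{L}$ is immediate. For the reverse inclusion I would invoke Assumption 7.1: since $G$ is connected, any two nodes $n,m$ are joined by a path $n=n_{0}\sim n_{1}\sim\cdots\sim n_{k}=m$; if $x\in\mathcal{L}$, then $x_{n_{i-1}}=x_{n_{i}}$ across each edge of the path, and chaining these equalities gives $x_{n}=x_{m}$. As $n,m$ are arbitrary, $x\in\mathcal{C}$. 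This propagation of edgewise equality into global equality is the main step — it is elementary but is exactly where connectivity (and the absence of self-loops, which rules out degenerate edges) is used; without connectivity the two sets differ and the lemma fails.

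With $h\circ D=\iota_{\mathcal{C}}$, Problem (7.2) collapses to $\min_{x\in\mathcal{C}}\,f(x)+g(x)$. Finally I would use the linear isomorphism $\mathcal{X}\ni x^{\ast}\mapsto(x^{\ast},\dots,x^{\ast})\in\mathcal{C}$: for such a tuple,
$$f(x)+g(x)=\sum_{n\in Q}\bigl(f_{n}(x^{\ast})+g_{n}(x^{\ast})\bigr),$$
which is precisely the objective of (6.1). Since this correspondence is a bijection onto $\mathcal{C}$ that preserves objective values, a point minimizes the restricted objective over $\mathcal{C}$ if and only if its common coordinate minimizes (6.1). Consequently the minimizers of (7.2) are exactly the tuples $(x^{\ast},\dots,x^{\ast})$ with $x^{\ast}$ a minimizer of (6.1), which is the assertion; existence of such a minimizer is supplied by Assumption 6.1(3).
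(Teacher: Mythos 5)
Your proof is correct: the identification $h\circ D=\iota_{\mathcal{L}}$, the connectivity argument chaining edgewise equalities into global consensus, and the objective-preserving bijection between consensus tuples and points of $\mathcal{X}$ together give exactly the claimed description of the minimizers. The paper itself offers no proof of this lemma (it simply cites reference [2]), and your argument is precisely the standard one used there, so there is nothing to add.
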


\subsection{Instantiating the PADMM$^{+}$}

Now we use the PADMM$^{+}$ to solve the Problem (7.2). Since the
newly defined function $h$ is separable with respect to the
$(y_{\epsilon})_{\epsilon\in E}$, we get

$$prox_{\tilde{T} h}(y)=(prox_{\tilde{T} \iota_{\mathcal{C}_{2}}}(y_{\epsilon}))_{\epsilon\in E}=((\bar{y}_{\epsilon}, \bar{y}_{\epsilon}))_{\epsilon\in E},$$
where $\bar{y}_{\epsilon}=(y_{\epsilon}(n)+y_{\epsilon}(m))/2$ if
$\epsilon=\{n,m\}$. With this at hand, the update equation (4.1a) of
the PADMM$^{+}$ can be  written as
$$z^{k+1}=((\bar{z}_{\epsilon}^{k+1},\bar{z}_{\epsilon}^{k+1}))_{\epsilon\in E},$$
where
$$\bar{z}^{k+1}=\frac{\xi^{k}_{n}+\xi^{k}_{m}}{2}+\frac{\Psi(\eta_{\epsilon}^{k}(n)+\eta_{\epsilon}^{k}(m))}{2},$$
 for any $\epsilon= \{n,m\}\in E$.
  Plugging this equality into Eq. (4.1b) of the
PADMM$^{+}$, it can be seen that
$\eta_{\epsilon}^{k}(n)=-\eta_{\epsilon}^{k}(m)$. Therefore
$$\bar{z}^{k+1}=\frac{\xi^{k}_{n}+\xi^{k}_{m}}{2},$$
for any $k \geq 1$. Moreover
$$y_{\epsilon}^{k+1}=\frac{\Psi^{-1}(\xi^{k}_{n}-\xi^{k}_{m})}{2}+\eta_{\epsilon}^{k}(n).$$
Observe that the $n$-th component of the vector $D^{\ast}Dx$
coincides with $ d_{n}x_{n}$, where $ d_{n}$ is the degree (i.e.,
the number of neighbors) of node $n$. From (4.1d) of the
PADMM$^{+}$, the $n^{th}$ component of  $  x^{k+1}$ can be written
$$x^{k+1}_{n}=prox_{\tilde{T}g_{n}/d_{n}}[\frac{(D^{\ast}(u^{k+1}-\tilde{T}y^{k+1}))_{n}-\tilde{T}\nabla
f_{n}(\xi^{k}_{n})}{d_{n}}],$$ where for any $y\in\mathcal{Y}$,
$$(D^{\ast}y)_{n}=\sum_{m:\{n,m\}\in E}y_{\{n,m\}}(n)$$
is the $n$-th component of $D^{\ast}y \in \mathcal{X}^{|Q |}$.
Plugging Eq.  (4.1c) of the PADMM$^{+}$ together with the
expressions of $\bar{z}^{k+1}_{\{n,m\}}$ and $y^{k+1}_{\{n,m\}}$ in
the argument of $prox_{\tilde{T}g_{n}/d_{n}}$ , we can have
\begin{align*}
x^{k+1}_{n}&=prox_{\tilde{T}g_{n}/d_{n}}[(I-\tilde{T}\Psi^{-1})\xi^{k}_{n}-\frac{\tilde{T}}{d_{n}}\nabla
f_{n}(\xi^{k}_{n})+\frac{\tilde{T}}{d_{n}}\sum_{m:\{n,m\}\in
E}(\Psi^{-1}\xi^{k}_{m}-\eta^{k}_{\{n,m\}}(n))].
\end{align*}
The algorithm is finally described by the following procedure: Prior
to the clock tick $k + 1$, the node $n$ has in its memory the
variables $x^{k}_{n}$, $\{y_{\{n,m\}}^{k}(n)\}_{m\thicksim n}$,
 and  $\{x^{k}_{m}\}_{m\thicksim n}$.
\begin{algorithm}[H]
\caption{Distributed PADMM$^{+}$.}
\begin{algorithmic}\label{1}
\STATE Initialization: Choose $x^{0}, x^{1}\in \mathcal{X}$, $y^{0},
y^{1}\in
\mathcal{Y}$, s.t. $\sum_{n}y^{0}_{n}=0$.\\
Do
$$
\begin{array}{l}
\bullet ~~Define~\eta^{k}=y^{k}+\alpha_{k}(y^{k}-y^{k-1}),\\
~~~~~~~~~~~~~~~\xi^{k}=x^{k}+\alpha_{k}(x^{k}-x^{k-1}),\\
\bullet ~~For~ any~ n \in Q ,~ Agent ~n ~performs ~the~ following~
operations: \\
~~~~y_{\{n,m\}}^{k+1}(n)=\eta_{\{n,m\}}^{k}(n)+\frac{\xi^{k}_{n}-\xi^{k}_{m}}{2},~~for~all~m\thicksim n, ~~~~~~~~~~~~~~~~~~~~~~~~~~~~~~~~~~(7.3a)\\
~~~~~~~~~~x^{k+1}_{n}=prox_{\tilde{T}g_{n}/d_{n}}[(I-\tilde{T}\Psi^{-1})\xi^{k}_{n}-\frac{\tilde{T}}{d_{n}}\nabla
f_{n}(\xi^{k}_{n})\\
~~~~~~~~~~~~~~~~~~~+\frac{\tilde{T}}{d_{n}}\sum_{m:\{n,m\}\in
E}(\Psi^{-1}\xi^{k}_{m}-\eta^{k}_{\{n,m\}}(n))].~~~~~~~~~~~~~~~~~~~~~~~~~~~~~~~(7.3b)\\
\bullet ~~Agent~ n ~sends~ the~ parameter~ y^{k+1}_{n}, x^{k+1}_{n}
 ~to~ their~ neighbors~ respectively.\\
 \bullet ~~Increment~ k.
\end{array}
$$

\end{algorithmic}
\end{algorithm}

\begin{thm}
 Assume that the minimization Problem (6.1) is consistent,
 $T$ and
$\Psi$ are two diagonal matrices which have same dimensional.  Let
Assumption 6.1 and Assumption 7.1 hold true and
$\tilde{T}^{-1}-\frac{1}{2}\hat{E}>0$,
$\|\tilde{T}^{-1}\|-\|\Psi^{-1}\|>\frac{\|\hat{E}\|}{2}$. Let
$(x^{k})_{k\in\mathbb{N}} $ be the sequence generated by Distributed
PADMM$^{+}$ for any initial point $( x^{0}, y^{0})$,  $( x^{1},
y^{1})$. Then for all $n \in Q$ the sequence $(x^{k}_{n})_{k\in
\mathbb{N}}$ converges to a solution of Problem (6.1).
\end{thm}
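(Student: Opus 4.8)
The plan is to recognize the Distributed PADMM$^{+}$ (Algorithm 8) as nothing but the PADMM$^{+}$ (Algorithm 6) applied to the reformulated Problem (7.2), and then to invoke Theorem 4.2 together with Lemma 7.1. Concretely, with $f(x)=\sum_{n\in Q}f_{n}(x_{n})$, $g(x)=\sum_{n\in Q}g_{n}(x_{n})$, $h$ given by (7.1), and $D$ the edge-incidence operator, the block-separability of $h$ over the edges and the projection formula $prox_{\tilde{T}\iota_{\mathcal{C}_{2}}}(y_{\epsilon})=(\bar{y}_{\epsilon},\bar{y}_{\epsilon})$ collapse the generic updates (4.1a)--(4.1d) to the local recursions (7.3a)--(7.3b). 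This reduction is exactly the computation already carried out in Section 7.2, where the antisymmetry $\eta^{k}_{\epsilon}(n)=-\eta^{k}_{\epsilon}(m)$ and the appearance of the degree factor through $(D^{\ast}Dx)_{n}=d_{n}x_{n}$ are established. So the first step is simply to certify that the per-node operations listed in Algorithm 8 coincide with the global PADMM$^{+}$ iteration written in the space $\mathcal{X}^{|Q|}\times\mathcal{Y}$.

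Second, I would check that the hypotheses of Theorem 4.2 are met for this instance. The consistency of (6.1) together with Assumption 6.1(4) and Assumption 7.1 yields Assumption 2.1 for Problem (7.2): the qualification condition $0\in ri(dom\,h-D\,dom\,g)$ holds because $h$ only enforces equality across each edge while $\cap_{n}ri\,dom\,g_{n}\neq\emptyset$ supplies a common interior point. The injectivity of $D$ required by Assumption 3.1 follows from Assumption 7.1: since $G$ is connected and has no self-loop, every node belongs to at least one edge, so $Dx=0$ forces $x=0$. The co-coercivity requirement is inherited from Assumption 6.1(1): because each $\nabla f_{n}$ is co-coercive w.r.t.\ $\hat{E}^{-1}$, the separable gradient $\nabla f$ is co-coercive w.r.t.\ the corresponding block-diagonal map, and this transfers to $\nabla(f\circ D^{-1})$ on $\mathcal{R}=\mathrm{Im}(D)$ and fixes $\bar{E}$. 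Finally, the step-size inequalities $\tilde{T}^{-1}-\tfrac{1}{2}\hat{E}>0$ and $\|\tilde{T}^{-1}\|-\|\Psi^{-1}\|>\tfrac{\|\hat{E}\|}{2}$ are precisely conditions (i)--(ii) of Theorem 4.2, while the inertial and relaxation constraints on $(\alpha_{k})$ and $(\rho_{k})$ are assumed exactly as in the earlier theorems.

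With these verifications in hand, Theorem 4.2 delivers that the global sequence $(x^{k})_{k\in\mathbb{N}}\subset\mathcal{X}^{|Q|}$ converges to a minimizer $x^{\star}$ of Problem (7.2). The last step is to translate this back to the original Problem (6.1): by Lemma 7.1 every minimizer of (7.2) is a tuple $(x^{\ast},\dots,x^{\ast})$ with $x^{\ast}$ a minimizer of (6.1), so the limit $x^{\star}$ has all components equal to a single solution $x^{\ast}$ of (6.1). Hence for each $n\in Q$ the component sequence $(x^{k}_{n})_{k\in\mathbb{N}}$ converges to $x^{\ast}$, which is the assertion.

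I expect the main obstacle to be the co-coercivity transfer in the second step: passing from co-coercivity of $\nabla f$ on $\mathcal{X}^{|Q|}$ to co-coercivity of $\nabla(f\circ D^{-1})$ on the range of the edge operator $D$, and pinning down the resulting $\bar{E}$, is where the geometry of $D^{\ast}D$ (equivalently the node degrees $d_{n}$ and the connectivity of $G$) genuinely enters. Verifying the qualification condition $0\in ri(dom\,h-D\,dom\,g)$ rigorously is a secondary but still delicate point, since it requires relating the edgewise consensus sets $\mathcal{C}_{2}$ to the relative interiors of the $dom\,g_{n}$ through the connectivity hypothesis of Assumption 7.1.
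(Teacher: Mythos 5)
Your proposal follows exactly the paper's (implicit) route: the paper establishes this theorem through the Section 7.2 derivation identifying Distributed PADMM$^{+}$ as PADMM$^{+}$ applied to the reformulated Problem (7.2), then relies on Theorem 4.2 for convergence of the iterates and on Lemma 7.1 to map minimizers of (7.2) back to tuples $(x^{\ast},\dots,x^{\ast})$ of minimizers of (6.1). Your extra verifications (injectivity of $D$ from connectivity, the qualification condition, and the co-coercivity transfer pinning down $\bar{E}$ from $\hat{E}$) merely make explicit details the paper glosses over, so the approach is the same and correct.
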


\subsection{A Distributed asynchronous primal-dual splitting  algorithm with  preconditioning} In this section, we use the randomized coordinate descent
on the above algorithm, we call this algorithm as preconditioned
distributed asynchronous primal-dual splitting  algorithm
(PDAPDS). This algorithm has the following attractive property:\\
Firstly, it significantly accelerates the convergence on problems
with irregular $D$. Moreover, it leaves the computational complexity
of the iterations basically unchanged. Finally, if we let
$(\zeta^{k})_{k\in \mathbb{N}}$ be a sequence of i.i.d. random
variables valued in $2^{ Q}$. The value taken by $\zeta^{k}$
represents the agents that will be activated and perform a prox on
their $x$ variable at moment $k$. The asynchronous algorithm goes as
follows:

\begin{algorithm}[H]
\caption{PDAPDS.}
\begin{algorithmic}\label{1}
\STATE Initialization: $x^{0},x^{1}\in \mathcal{X}$, $y^{0},y^{1}\in
\mathcal{Y}$.\\
Do
$$
\begin{array}{l}
\bullet ~~Define~\eta^{k}=y^{k}+\alpha_{k}(y^{k}-y^{k-1}),\\
~~~~~~~~~~~~~~~\xi^{k}=x^{k}+\alpha_{k}(x^{k}-x^{k-1}),\\
\bullet ~~Select~ a~ random~ set ~of~ agents~ \zeta^{k+1} =\mathcal{B}. \\
\bullet ~~For~ any~ n \in \mathcal{B},~ Agent~ n~ performs~ the
~following~
operations:\\
~~~~-For ~all~ m \thicksim n, do\\
~~~~~~~~y_{\{n,m\}}^{k+1}(n)=\frac{\eta_{\{n,m\}}^{k}(n)-\eta_{\{n,m\}}^{k}(m)}{2}+\frac{\xi^{k}_{n}-\xi^{k}_{m}}{2},\\
~~~~-x^{k+1}_{n}=prox_{\tilde{T}g_{n}/d_{n}}[(I-\tilde{T}\Psi^{-1})\xi^{k}_{n}-\frac{\tilde{T}}{d_{n}}\nabla
f_{n}(\xi^{k}_{n})\\
~~~~~~~~~~~~~~~~~+\frac{\tilde{T}}{d_{n}}\sum_{m:\{n\thicksim m\}\in
E}(\Psi^{-1}\xi^{k}_{m}+\eta^{k}_{\{n,m\}}(m))].\\
~~~~-For ~all~ m \thicksim n,~ send
\{x^{k+1}_{n},y_{\{n,m\}}^{k+1}(n)\}~to~ Neighbor~
m.\\
\bullet ~~For ~any~ agent ~n\neq\mathcal{B}, ~
x^{k+1}_{n}=\xi^{k}_{n},
~and~~y_{\{n,m\}}^{k+1}(n)=\eta_{\{n,m\}}^{k}(n)\\
~~~~for~ all~ m \thicksim n.\\
 \bullet ~~Increment~ k.
\end{array}
$$

\end{algorithmic}
\end{algorithm}

\begin{ass}
The collections of sets $\{\mathcal{B}_{1},\mathcal{B}_{2},\ldots\}$
such that $\mathbb{P}[\zeta^{1} = \mathcal{B}_{i}]$ is positive
satisfies $ \bigcup\mathcal{B}_{i} =Q$.
\end{ass}

\begin{thm}
  Assume that the minimization Problem (6.1) is consistent,
$T$ and $\Psi$ are two diagonal matrices which have same
dimensional.  Let Assumption 6.1, Assumption 7.1 and 7.2 hold true,
and $\tilde{T}^{-1}-\frac{1}{2}\hat{E}>0$,
$\|\tilde{T}^{-1}\|-\|\Psi^{-1}\|>\frac{\|\hat{E}\|}{2}$. Let
$(x^{k}_{n})_{n\in Q}$ be the sequence generated by PDAPDS for any
initial point $( x^{0}, y^{0})$,  $( x^{1}, y^{1})$. Then the
sequence $x^{k}_{1},\ldots,x^{k}_{|Q|}$ converges to a solution of
Problem (6.1).
\end{thm}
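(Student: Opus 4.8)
The plan is to reproduce, in the distributed setting, the reduction used for Theorem 6.2: identify PDAPDS as the inertial randomized coordinate descent of Theorem 5.1 applied to the synchronous Distributed PADMM$^{+}$ operator, and then read off convergence to a consensus minimizer through Lemma 7.1. First I would invoke Theorem 7.1 together with the derivation preceding the Distributed PADMM$^{+}$: that algorithm is precisely PADMM$^{+}$ instantiated on Problem (7.2), with the edge operator $D$, the node-separable $f,g$, and the edgewise indicator $h$ of (7.1). By Lemma 3.1 the corresponding primal-dual operator $T=T_{1}\circ T_{2}$ is $\tilde{a}$-averaged on the preconditioned space, hence nonexpansive, and by Lemma 7.1 its fixed points are exactly the consensus tuples $(x^{\ast},\ldots,x^{\ast})$ with $x^{\ast}$ a minimizer of (6.1); this pins down the target limit. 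Throughout I take the inertial parameters $(\alpha_{k}),(\rho_{k}),\hat{\delta},\theta$ to satisfy the conditions of Theorem 5.1, as in the preceding theorems.

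Next I would fix the block structure demanded by Theorem 5.1 by taking $\mathcal{J}=Q$, $J=|Q|$, and assigning to agent $n$ the block consisting of $x_{n}$ together with the directed duals $\{y_{\{n,m\}}(n):m\sim n\}$. The activation set $\zeta^{k+1}=\mathcal{B}$ of PDAPDS then is the random $\vartheta\in 2^{\mathcal{J}}$, and Assumption 7.2 ($\bigcup\mathcal{B}_{i}=Q$) supplies exactly Condition (5.1). The remaining content is to check that the local operations of the activated agents, equations (7.3a)--(7.3b) with the frozen components $x^{k+1}_{n}=\xi^{k}_{n}$ and $y^{k+1}_{\{n,m\}}(n)=\eta^{k}_{\{n,m\}}(n)$ for $n\notin\mathcal{B}$, reproduce $\hat{T}^{(\zeta^{k+1})}$ evaluated at the inertial point $(\xi^{k},\eta^{k})$. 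This is the distributed analogue of the closing computation in the proof of Theorem 6.2, and it rests on the locality already exhibited when deriving the Distributed PADMM$^{+}$: the $n$-th prox and dual updates of the full $T$ depend only on $x_{n}$, the neighboring $x_{m}$, and the incident duals $\eta_{\{n,m\}}$.

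Once this identification is in place, the PDAPDS iterates are exactly the iterates (5.3) of the nonexpansive $T$, so Theorem 5.1 gives almost sure convergence of $(x^{k},y^{k})$ to a point of $Fix(T)$; Lemma 7.1 then forces $x^{k}_{1},\ldots,x^{k}_{|Q|}$ to converge almost surely to a common minimizer of (6.1), which is the assertion.

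The step I expect to be the real obstacle is justifying that the preconditioned inner product decomposes as an orthogonal sum over the per-agent blocks, since this is what licenses the product-space hypothesis of Theorem 5.1. A priori the off-diagonal part $D$ of the preconditioner couples the node variable $x_{n}$ with edge-indexed duals, so block-diagonality across agents is not automatic. The point to verify is that each edge $\{n,m\}$ contributes only $\langle x_{n},y_{\{n,m\}}(n)\rangle+\langle x_{m},y_{\{n,m\}}(m)\rangle$, so that every such coupling stays inside a single agent's block once the dual endpoint $y_{\{n,m\}}(n)$ is assigned to agent $n$; together with the diagonality of $\tilde{T}$ and $\Psi$ this makes $\langle\cdot,\cdot\rangle_{P}$ the orthogonal sum of per-agent inner products, so that the rescaled weights $q_{j}^{-1}=\sum_{\vartheta}p_{\vartheta}\mathbf{1}_{\{j\in\vartheta\}}$ from the proof of Theorem 5.1 are well defined and the averagedness of $T$ passes to the coordinate-wise scheme.
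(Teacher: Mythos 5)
Your overall strategy coincides with the paper's: view PDAPDS as the inertial randomized coordinate-descent scheme of Theorem 5.1 applied to the synchronous operator $T$ coming from (3.10) with $(\bar{f},\bar{g},h)=(f\circ D^{-1},g\circ D^{-1},h)$, with per-agent blocks $\bigl((y_{\{n,m\}}(n))_{m\sim n},\,x_{n}\bigr)$, Assumption 7.2 supplying Condition (5.1), and Lemma 7.1 converting fixed points into consensus minimizers of (6.1). Your closing observation --- that every cross term of the preconditioned inner product has the form $\langle \xi_{n}, y_{\{n,m\}}(n)\rangle$ and hence stays inside one agent's block, so the $P$-norm splits as an orthogonal sum over agents --- is correct, and it makes explicit a point the paper leaves implicit.

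There is, however, a genuine gap at the identification step. You assert that the activated agents perform ``equations (7.3a)--(7.3b)'' while the others freeze, and that this reproduces $\hat{T}^{(\zeta^{k+1})}$. That is false, and the bulk of the paper's proof is devoted to precisely this issue. The synchronous updates (7.3a)--(7.3b) were derived using the identity $\eta^{k}_{\{n,m\}}(n)=-\eta^{k}_{\{n,m\}}(m)$ (equivalently $y^{k}_{\epsilon}\in\mathcal{C}_{2}^{\perp}$ for every edge), which holds when all dual variables are refreshed at every iteration because $prox_{h^{\ast}}$ projects each $y_{\epsilon}$ onto $\mathcal{C}_{2}^{\perp}$. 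Once only a random subset $\mathcal{B}$ of agents updates, this identity breaks down --- as the paper notes, there is no reason for which $proj_{\mathcal{C}_{2}^{\perp}}(\eta^{k}_{\epsilon})=0$ --- and the coordinates of $T$ must be re-derived from $h^{\ast}=\sum_{\epsilon}\iota_{\mathcal{C}_{2}^{\perp}}$ without that simplification. Doing so yields the dual update
$$y_{\{n,m\}}^{k+1}(n)=\frac{\eta_{\{n,m\}}^{k}(n)-\eta_{\{n,m\}}^{k}(m)}{2}+\frac{\xi^{k}_{n}-\xi^{k}_{m}}{2},$$
and a primal update containing $+\eta^{k}_{\{n,m\}}(m)$ in place of $-\eta^{k}_{\{n,m\}}(n)$; these are the PDAPDS equations, and they differ from (7.3a)--(7.3b). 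Consequently, the operator obtained by freezing (7.3a)--(7.3b) outside $\mathcal{B}$ is \emph{not} $\hat{T}^{(\zeta^{k+1})}$, and Theorem 5.1 does not apply to it through your argument. The missing content is exactly the computation you dismissed as ``locality'': evaluate (3.10a)--(3.10b) edge-by-edge via $prox_{h^{\ast}}=proj_{\mathcal{C}_{2}^{\perp}}$ and check that the result matches PDAPDS as stated --- which is what the paper's proof does.
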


\begin{proof}
Let  $(\bar{f}, \bar{g}, h)=(f\circ D^{-1}, g\circ D^{-1}, h)$ where
$f, g, h$  and $D$  are the ones defined in the Problem 7.2. By
Equations (3.10). We write these equations more compactly as
$(y^{k+1}, x^{k+1}) = T(\xi^{k}, \eta^{k})$ where $(\xi^{k},
\eta^{k})=(x^{k}, y^{k})+\alpha_{k}[(x^{k}, y^{k})-(x^{k-1},
y^{k-1})]$ , the operator $T$ acts in the space
$\mathcal{Z}=\mathcal {Y}\times\mathcal{R}$, and $\mathcal{R}$ is
the image of $\mathcal{X}^{|Q|}$ by $D$. Then from the proof of
Lemma 3.1, we konw that $T$ is $\tilde{a}$-averaged, where
$\tilde{a}=(2-a_{1})^{-1}$ and
$a_{1}=\frac{\|\hat{E}\|}{2}(\|\tilde{T}^{-1}\|-\|\Psi^{-1}\|)^{-1}$.
Defining the selection operator $\mathcal{S}_{n }$ on $\mathcal{Z}$
as $\mathcal{S}_{n }(\eta, D\xi) = (\eta_{\epsilon}(n)_{\epsilon\in
Q:n\in\epsilon}, \xi_{n})$. So, we obtain that $\mathcal{Z} =
\mathcal{S}_{1 }(\mathcal{Z})\times \cdots\times \mathcal{S}_{|Q|
}(\mathcal{Z})$ up to an element reordering. Identifying the set
$\mathcal{J}$ introduced in the notations of Section 5.1 with $Q$,
the operator $T^{(\zeta^{k})}$ is defined as follows:

$$
\mathcal{S}_{n }(T^{(\zeta^{k})}(\eta, D\xi)) = \left\{
\begin{array}{l}
\mathcal{S}_{n }(T(\eta, D\xi)),\,\,\, \,\,if \,n\,
\in\zeta^{k},\\
\mathcal{S}_{n }(\eta, D\xi),\,\,\,\,\,\,\,\,\,\,\, \,\,if \,n\,
\neq\zeta^{k} .
\end{array}
\right.
$$
Then by Theorem 5.1, we know the sequence $(y^{k+1}, Dx^{k+1}) =
T^{(\zeta^{k+1})}(\eta^{k}, D\xi^{k})$ converges almost surely to a
solution of Problem (3.11). Moreover, from Lemma 7.1, we have the
sequence $x^{k}$ converges almost surely to a solution of Problem
(6.1).\\
Therefore we need to show that the operator $T^{(\zeta^{k+1})}$ is
translated into the PDAPDS algorithm. The definition (7.1) of  $h$
shows that
$$h^{\ast}(\varphi)=\Sigma_{\epsilon\in E}\iota_{\mathcal
{C}_{2}^{\perp}}(\varphi_{\epsilon}),$$ where
$\mathcal{C}_{2}^{\perp}= \{(x,-x) : x\in\mathcal {X}\}$. Therefore,
writing
$$(\varsigma^{k+1}, \upsilon^{k+1}=Dq^{k+1}) = T(\eta^{k},
\lambda^{k}=D\xi^{k}),$$  then by Eq. (3.10a),
$$\varsigma_{\epsilon}^{k+1}=proj_{\mathcal{C}_{2}^{\perp}}(\eta^{k}_{\epsilon}+\Psi^{-1}\lambda^{k}_{\epsilon}).$$
Observe that contrary to the case of the synchronous algorithm
(7.3), there is no reason here for which
$proj_{\mathcal{C}_{2}^{\perp}}(\eta^{k}_{\epsilon})= 0$.  Getting
back to $(y^{k+1},D x^{k+1}) = T^{(\zeta^{k+1})}(\eta^{k},
\lambda^{k}=D\xi^{k})$, we have for all $n \in \zeta^{k+1}$ and all
$m \thicksim n$,
\begin{align*}
y^{k+1}_{\{n,m\}}(n) &=
\frac{\eta^{k}_{\{n,m\}}(n)-\eta^{k}_{\{n,m\}}(m)}{2}+\frac{\lambda^{k}_{\{n,m\}}(n)-\lambda^{k}_{\{n,m\}}(m)}{2}\\
&=\frac{\eta^{k}_{\{n,m\}}(n)-\eta^{k}_{\{n,m\}}(m)}{2}+\frac{\xi^{k}_{n}-\xi^{k}_{m}}{2}.
\end{align*}
By Equation (3.10b) we also get
$$\upsilon^{k+1}=\arg\min_{w\in\mathcal
{R}}[\bar{g}(w)+\langle\nabla
\bar{f}(\lambda^{k}),w\rangle+\frac{\|w-\lambda^{k}+\tilde{T}(2y^{k+1}-\eta^{k})\|_{\tilde{T}^{-1}}^{2}}{2}].$$
Upon noting that $\bar{g}(D\xi) = g(\xi)$ and $\langle\nabla
\bar{f}(\lambda^{k}),D\xi\rangle= \langle(D^{-1})^{\ast}\nabla
f(D^{-1}D\xi^{k}),D\xi\rangle = \langle\nabla f(\xi^{k}),
\xi\rangle$, the above equation becomes
$$q^{k+1}=\arg\min_{w\in\mathcal
{X}}[g(w)+\langle\nabla
f(\xi^{k}),w\rangle+\frac{\|D(w-\xi^{k})+\tilde{T}(2y^{k+1}-\eta^{k})\|_{\tilde{T}^{-1}}^{2}}{2}].$$
Recall that $(D^{\ast}Dx)_{n} = d_{n}x_{n}$. Hence, for all $n\in
\zeta^{k+1}$, we get after some computations
$$x^{k+1}_{n}=prox_{\tilde{T}g_{n}/d_{n}}[\xi^{k}_{n}-\frac{\tilde{T}}{d_{n}}\nabla
f_{n}(\xi^{k}_{n})+\frac{\tilde{T}}{d_{n}}(D^{\ast}(2y^{k+1}-\eta^{k}))_{n}].$$
 Using the identity $(D^{\ast}y)_{n}=\sum_{m:\{n,m\}\in
E}y_{\{n,m\}}(n)$ , it can easy check these equations coincides with
the $x$-update  in the PDAPDS algorithm.

\end{proof}

\section{Numerical experiments}
We consider the problem of $l_{1}$-regularized logistic regression.
Denoting by $m$ the number of observations and by $q$ the number of
features, the optimization problem writes
$$\inf_{x\in \mathbb{R}^{q}}\frac{1}{m}\sum_{i=1}^{m}\log(1+e^{-y_{i}a_{i}^{T}x})+\tau\|x\|_{1},\eqno{(8.1)}$$
where the $(y_{i})_{i=1}^{m}$ are in $\{-1,+1\}$, the
$(a_{i})_{i=1}^{m}$ are in $\mathbb{R}^{q}$, and $\tau>0$ is a
scalar. Let $(\mathcal{W})_{n=1}^{N}$ indicate a partition of $\{1,
. . . ,m\}$. The optimization problem then writes

$$\inf_{x\in \mathbb{R}^{q}}\sum_{n=1}^{N}\sum_{i\in\mathcal{W}_{n}}\frac{1}{m}\log(1+e^{-y_{i}a_{i}^{T}x})+\tau\|x\|_{1},\eqno{(8.2)}$$

or, splitting the problem between the batches

$$\inf_{x\in \mathbb{R}^{N^{q}}}\sum_{n=1}^{N}(\sum_{i\in\mathcal{W}_{n}}\frac{1}{m}\log(1+e^{-y_{i}a_{i}^{T}x_{n}})+\frac{\tau}{N}\|x_{n}\|_{1})+\iota_{\mathcal{C}(x)},\eqno{(8.3)}$$
where $x = (x_{1}, ..., x_{N})$ is in  $\mathbb{R}^{N^{q}}$. It is
easy to see that Problems (8.1), (8.2) and (8.3) are equivalent and
Problem (8.3) is in the form of (6.2).
\section{Conclusion}

In  this paper, we introduced a new framework for stochastic
coordinate descent and used on a algorithm called ADMMDS$^{+}$. As a
byproduct, we obtained a stochastic approximation algorithm with
dynamic stepsize which can be used to handle distinct data blocks
sequentially. We also obtained an asynchronous distributed algorithm
with dynamic stepsize which enables the processing of distinct
blocks on different machines.

\noindent \textbf{Acknowledgements}

This work was supported by the National Natural Science Foundation
of China (11131006, 41390450, 91330204, 11401293), the National
Basic Research Program of China (2013CB 329404), the Natural Science
Foundations of Jiangxi Province (CA20110\\
7114, 20114BAB 201004).

\end{document}